\newtheorem{theorem}{Theorem}[section]
\newtheorem{lemma}[theorem]{Lemma}
\newtheorem{proposition}[theorem]{Proposition}
\newtheorem{corollary}[theorem]{Corollary}
\newtheorem{conjecture}[theorem]{Conjecture}
\newtheorem{question}[theorem]{Question}
\theoremstyle{definition}
\newtheorem{definition}[theorem]{Definition}
\newtheorem{example}[theorem]{Example}
\newtheorem{remark}[theorem]{Remark}
\newenvironment{proofComputationQreg}{\noindent {\bf Proof of Theorem \ref{ComputationQreg}:}}{\qed \par}
\newcommand{\excise}[1]{}
\newcommand{\Spec}{\operatorname{Spec}}
\newcommand{\Proj}{\operatorname{Proj}}
\newcommand{\supp}{\operatorname{Supp}}
\renewcommand{\dim}{\operatorname{dim}}
\newcommand{\Sym}{\operatorname{Sym}}
\newcommand{\rk}{\operatorname{rk}}
\renewcommand{\and}{\qquad\text{and}\qquad}
\newcommand{\Hom}{\operatorname{Hom}}
\newcommand{\Z}{\mathbb{Z}}
\newcommand{\Q}{\mathbb{Q}}
\newcommand{\N}{\mathbb{N}}
\newcommand{\R}{\mathbb{R}}
\newcommand{\C}{\mathbb{C}}
\renewcommand{\H}{\operatorname{H}}
\renewcommand{\a}{\alpha}
\newcommand{\la}{\lambda}
\newcommand{\scrA}{\mathscr{A}}
\newcommand{\cs}{\mathbb{G}_m}
\newcommand{\Lie}{\operatorname{Lie}}
\newcommand{\fg}{\mathfrak{g}}
\newcommand{\fk}{\mathfrak{k}}
\newcommand{\ft}{\mathfrak{t}}
\newcommand{\cO}{\mathcal{O}}
\renewcommand{\sl}{\mathfrak{sl}}
\newcommand{\HH}{H\! H}
\renewcommand{\H}{H}
\newcommand{\omreg}{\omega^{\operatorname{reg}}}
\newcommand{\Xreg}{X^{\operatorname{reg}}}
\newcommand{\tX}{\tilde{X}}
\newcommand{\tXreg}{\tX^{\operatorname{sm}}}
\newcommand{\scrX}{\mathscr{X}}
\newcommand{\reg}{{\mathrm{reg}}}
\newcommand{\Treg}{T_\reg}
\newcommand{\Mreg}{M_\reg}
\newcommand{\Rreg}{R_\reg}
\newcommand{\cohtor}{K}
\newcommand{\cohtorreg}{K_{\reg}}
\newcommand{\Sreg}{S_\reg}
\newcommand{\Freg}{F_\reg}
\newcommand{\Ireg}{I_\reg}
\newcommand{\Ereg}{E_\reg}
\newcommand{\Qreg}{Q_\reg}
\newcommand{\Dreg}{D_\reg}
\newcommand{\barEreg}{\bar{E}_\reg}
\newcommand{\barIreg}{\bar{I}_\reg}
\newcommand{\barQreg}{\bar{Q}_\reg}
\newcommand{\Nreg}{N_\reg}
\newcommand{\Aut}{\operatorname{Aut}}
\newcommand{\tr}{\operatorname{tr}}
\newcommand{\Azo}{\scrA_0^2}
\newcommand{\Uhg}{U_{\hspace{-1pt}\hbar}\hspace{1pt}\fg}
\newcommand{\xiHC}{\varphi}
\newcommand{\an}{\operatorname{an}}
\newcommand{\Dd}{D}
\newcommand{\cmt}{t}
\newcommand{\AKZ}{}
\newcommand{\AKZsup}{AKZ}
\newcommand{\cohpsi}{\widetilde{\psi}}
\newcommand{\qconnsign}{-}
\newcommand{\qconnosign}{+}
\newcommand{\GHC}{\Phi}
\newcommand{\pointrest}{r}
\newcommand{\nicktodo}{\todo[inline,color=green!20]}
\newcommand{\michtodo}{\todo[inline,color=blue!20]}
\begin{document}
\noindent{\Large\bf The quantum Hikita conjecture}\\

\noindent{\bf Joel Kamnitzer}\\
Department of Mathematics, University of Toronto, Toronto, Ontario, M5S 2E4\\

\noindent{\bf Michael McBreen}\\
Department of Mathematics, University of Toronto, Toronto, Ontario, M5S 2E4\\

\noindent{\bf Nicholas Proudfoot}\\
Department of Mathematics, University of Oregon,
Eugene, OR 97403\\

{\small
\begin{quote}
\noindent {\em Abstract.}
The Hikita conjecture relates the coordinate ring of a conical symplectic singularity
to the cohomology ring of a symplectic resolution of the dual conical symplectic singularity.
We formulate a quantum version of this conjecture, which relates the quantized coordinate
ring of the first variety to the quantum cohomology of a symplectic resolution of the dual variety.
We prove this conjecture for hypertoric varieties and for the Springer resolution. 

Our paper includes an appendix, written by Ben Webster, which studies highest weights for quantizations of symplectic resolutions with isolated torus actions.
\end{quote} }

\spacing{1.2}
\section{Introduction}
A fascinating phenomenon in the theory of conical symplectic resolutions is that they tend to come in dual pairs.
The exact definition of this notion of ``symplectic duality" is somewhat in flux; a proposed definition in terms of a certain category $\cO$
was formulated by the third author and collaborators in \cite[Section 10]{BLPWgco}, but it is not clear that this definition is flexible enough to encompass
all of the examples that one wants to consider.  Nonetheless, there is broad agreement on certain basic families of examples:
the Springer resolution is dual to the Springer resolution for the Langlands dual group,
\cite[Theorem 1.1.3]{BGS96}, hypertoric varieties are dual to other hypertoric varieties \cite[Theorem 1.2]{BLPWtorico}, affine type A quiver varieties are dual to other such varieties \cite[Corollary 5.25]{Webqui}.  Finite ADE quiver varieties
are dual to slices in the affine Grassmannian for the Langlands dual group \cite[Remark 10.7]{BLPWgco} and \cite{KTWWY2}.
Finally, and perhaps most important, given a linear representation of a reductive group, the Higgs branch of the associated 3-dimensional
$N=4$ supersymmetric gauge theory (defined as a hyperk\"ahler quotient) is dual to the Coulomb branch
of the same theory (defined in \cite{BFN}), at least when the two spaces are sufficiently well behaved \cite{WCoulomb}.

Of the various manifestations of symplectic duality in terms of algebraic invariants of the resolutions, one of the most attractive is due
to Hikita \cite{Hikita}.  Let $\tX\to X$ and $\tX^!\to X^!$ be a dual pair of conical symplectic resolutions, and let $T$ be a maximal torus
in the Hamiltonian automorphism group of $\tX$.  Hikita observed that, for many of the aforementioned examples,
the coordinate ring of the fixed scheme $X^T$ is isomorphic to the cohomology ring of $\tX^!$.
Specifically, he proved this for hypertoric varieties, finite type A quiver varieties, and the Hilbert scheme of points
in the plane (which is self-dual), and he asked whether this phenomenon might hold for other examples of symplectic duality.  For affine Grassmannian slices, this was proved by the first author and collaborators \cite[Theorem 8.1]{KTWWY}.
We will refer to this isomorphism of algebras as the {\bf Hikita conjecture}.

The Hikita conjecture was extended by Nakajima \cite[Conjecture 8.9]{KTWWY}, who proposed that the equivariant
cohomology of $\tX^!$ for the conical $\cs$-action should coincide with the $B$-algebra of the quantized coordinate
ring of $X$, with the equivariant parameter for the conical action identified with the quantization parameter for the coordinate ring.
The $B$-algebra is an object that was introduced in \cite[Section 5.2]{BLPWgco}
to construct the standard and costandard objects of category $\cO$
\cite[Section 5.2]{BLPWgco}.  We will refer to Nakajima's extension as the {\bf equivariant Hikita conjecture}.  In \cite[Theorem 1.5]{KTWWY2}, the first author and collaborators established a weak form of the equivariant Hikita conjecture for affine Grassmannian slices.

Our goal is to introduce yet another level of complexity to the Hikita conjecture.  On one side of our conjecture,
we will have the {\bf specialized quantum D-module} of $\tX^!$.  As a vector space, this is basically the equivariant quantum cohomology ring
(see Remark \ref{not quite the ring}),
but it is equipped with the structure of a module over the Rees algebra of a certain ring of differential operators, where the module structure
is related to quantum multiplication by divisors.  The beautiful structures attached to the quantum D-module of a conical symplectic resolution have been the subject of much recent interest, starting with \cite{okounkov2010quantum} and \cite{BMO}. For a sample of subsequent works, see \cite{QGQC, okounkov2016quantum, McBS, McBP, aganagic2018quantum, okounkov2015enumerative}.
The word ``specialized" refers to the fact that we identify the Rees parameter with the equivariant parameter for the conical action,
which is a major simplification (see Remark \ref{CY}).

On the other side of our conjecture, we have an object that serves as the universal
source for graded traces of representations of the quantized coordinate ring of $X$, just as degree zero Hochschild homology serves as the universal
source for ordinary traces.  More specifically,
let $\scrA$ be the canonical quantization of the
universal filtered Poisson deformation of $\tX$.  Thus $\scrA$ is a non-commutative algebra with a large centre, and the various central
quotients of $\scrA$ are each quantizations of the coordinate ring of $X$.  For example, if $X$ is the nilpotent cone in a reductive Lie algebra,
then $\scrA$ is a finite extension of the corresponding universal enveloping algebra.
The algebra $\scrA$ comes with two compatible gradings, one into weight spaces for the Hamiltonian torus action, and an additional
$\N$-grading into weight spaces for the conical $\cs$-action.  Let $\scrA_0^2$ be the part of $\scrA$ that lies
in weight 0 for the Hamiltonian torus action and degree 2 for the conical action.  Let $S$ be the algebra with basis elements $q^{\la}$,
where $\la$ is a element of the semigroup generated by certain weights of the Hamiltonian torus action called {\bf equivariant roots}
(Section \ref{sec:eq-roots}).  We then define $M$ to be the quotient of $S\otimes\Sym\scrA_0^2$
by the $S$-linear span of elements of the form
$1\otimes ab - q^\la \otimes ba$, where $a,b\in\scrA$ are elements of weight $\la$ and $-\la$, respectively.  This vector space $M$
is not a ring, but rather a module under the action of the Rees algebra of a certain ring of differential operators (Proposition \ref{Rsub}).

The definition of $M$ is motivated as follows.  Let $V$ be a graded module over $\scrA$, and for any weight $\mu$ of the Hamiltonian torus,
let $V_\mu \subset V$ be the corresponding weight space (see Section \ref{sec:traces} for a more precise definition).  If $V$ is suitably
well behaved, then we have a {\bf graded trace} map that takes an element $a\in\scrA_0$ to a power series where the coefficient of $q^\mu$
is equal to the trace of $a$ on $V_\mu$.  We then prove that the graded trace map factors through $M$ (Proposition \ref{lemma characters}).
For this reason, we call $M$ the {\bf D-module of graded traces}.
We note that after specializing $q=\hbar=1$, $M$ coincides with the degree zero Hochschild homology of $\scrA$ 
(Proposition \ref{M1}), and we obtain the ordinary
trace map for a finite dimensional representation of $\scrA$.

Our main conjecture (Conjecture \ref{pi}) says that, after inverting some parameters associated with the equivariant roots, $M$ can be identified with the specialized quantum D-module of $\tX^!$, thus relating the quantization of $X$ to the quantum cohomology of $\tX^!$. We call this the {\bf quantum Hikita conjecture}.

\begin{conjecture} If $X$ and $X^!$ are dual conical symplectic singularities and $\tX^!$ is a symplectic
resolution of $X^!$, then the D-module of graded traces for $X$
may be identified with the specialized quantum D-module of $\tX^!$ away from the root hyperplanes.
\end{conjecture}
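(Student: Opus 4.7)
The plan is to construct an explicit map $\Phi$ between the two D-modules after inverting the equivariant roots, and then to verify separately that $\Phi$ is an isomorphism of underlying $S$-modules and an intertwiner of the D-module structures.

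\textbf{Step one} (classical limit and seed). Setting the quantum parameter $q$ to zero on the right and the corresponding variables to their boundary values on $M$, the specialized quantum D-module degenerates to the $\cs$-equivariant cohomology $H^*_\cs(\tX^!;\C)$, while $M$ should degenerate to a localization of the $B$-algebra of $\scrA$, with $\hbar$ playing the role of the equivariant parameter. The equivariant Hikita conjecture of Nakajima, established or expected in each case of interest, provides the seed isomorphism here and so pins down the fiber of $\Phi$ at $q=0$.

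\textbf{Step two} (lifting by flatness). Extend this fiber to a family over all of $q$-space. One shows that both sides are free of the same rank over the base $S$ (after inverting the roots) and appeals to uniqueness; more concretely, one transports the seed along a fundamental solution to the quantum connection, which exists in a formal neighborhood of $q=0$ by Dubrovin-type theory for conical symplectic resolutions.

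\textbf{Step three} (intertwining the connections) is the main obstacle. The quantum D-module structure is divisor quantum multiplication, encoded by genus-zero Gromov--Witten invariants of $\tX^!$. The corresponding structure on $M$ arises from the action of $\Azo$ via the defining relations $1\otimes ab - q^\la\otimes ba$. To intertwine these, one must identify Gromov--Witten counts of curves of class $\la$ on $\tX^!$ with the algebraic trace data encoded by commutators $[a,b]$ for $a\in\scrA_\la$, $b\in\scrA_{-\la}$. No general structural theorem of this shape is presently available, so for the two cases the authors establish the match would be carried out by hand: for hypertoric varieties, both sides admit explicit combinatorial descriptions in terms of the underlying hyperplane arrangement, reducing the comparison to known formulas for hypertoric quantum cohomology; for the Springer resolution, the quantum connection is the trigonometric Casimir connection of Braverman--Maulik--Okounkov, and one matches it against a presentation of $M$ obtained from graded traces of principal-series-type modules for (a finite extension of) $U(\fg)$.
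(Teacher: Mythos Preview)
The statement is a conjecture, and the paper does not prove it in general; it establishes it only for hypertoric varieties and for the Springer resolution. So any assessment must be against those two case proofs.

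Your Steps one and two do not constitute a reduction. Knowing that both sides are free of the same rank over $\Sreg$ (which is true, and the paper does verify the rank of $\Mreg$ in Section~3.7) gives no isomorphism of $\Rreg$-modules. ``Transporting the seed along a fundamental solution to the quantum connection'' produces an isomorphism of $\Rreg$-modules only if the two connections already agree, which is exactly the content of Step three. So Steps one and two are not an independent simplification; the entire weight rests on Step three, and you acknowledge that no general argument is available there. The paper does not attempt any such general reduction.

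For the two cases, your outline is in the right spirit but misses the actual mechanism. In both cases the paper shows that $\Mreg$ and $\Qreg^!$ are \emph{cyclic} over $\Rreg\cong\Ereg^!$, generated by $1\otimes 1$, and then computes the annihilator ideals explicitly and matches them. In the hypertoric case the annihilator of $1\otimes 1$ in $\Mreg$ is generated by $r(\la)=[a]^\la(1-q^\la)$ for $\la\in\N\Sigma_+$; the key lemma (Proposition~6.10) reduces this to $\la\in\Sigma_+$ via cancellation-free decompositions into cocircuits. On the quantum side the analogous generators $s(\a)=[u]^\a(1-q^\a)$ are supplied by \cite{McBS}, and the formulas match verbatim under Gale duality. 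For the Springer resolution the paper passes to $W$-invariants and shows that both $\Mreg^W$ and $(\Qreg^!)^W$ are free of rank one over the subalgebra $\Dreg\subset\Rreg^W$; the remaining content is to identify how $(\Sym\ft)^W$ acts. On the $M$-side this is extracted from \emph{finite-dimensional} irreducibles via the Weyl character formula (not principal series), yielding the geometric Harish-Chandra map $a\mapsto\delta^{-1}\iota(\varphi(a)_\rho)\delta$. On the quantum side the same formula is obtained by conjugating the BMO connection into the affine KZ connection and invoking the Cherednik--Matsuo identification with the Calogero--Moser system at the special parameter $t=1$; this last step is the one genuinely nontrivial input and is absent from your sketch.
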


\begin{theorem}The quantum Hikita conjecture holds for hypertoric varieties and for Springer resolutions
{\em (Theorems \ref{hypertoric-pi} and \ref{Springer-pi})}.
\end{theorem}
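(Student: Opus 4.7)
The plan is to treat the two cases separately, as the statement indicates they will be established by Theorems \ref{hypertoric-pi} and \ref{Springer-pi}. In each case, the strategy is parallel: present the D-module of graded traces $M$ by explicit generators and relations inherited from the concrete structure of $\scrA$, give a matching presentation of the specialized quantum D-module of $\tX^!$ drawn from the existing literature on the quantum cohomology of the resolution, and exhibit a canonical isomorphism identifying the equivariant roots for $X$ with the K\"ahler parameters for $\tX^!$ and the conical equivariant parameter on one side with the Rees parameter on the other.

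For the hypertoric case, $\scrA$ is a hypertoric enveloping algebra attached to a short exact sequence of lattices, and $\scrA_0^2$ is spanned by an explicit finite list of quadratic Euler-type generators indexed by the hyperplanes. The defining elements $1 \otimes ab - q^\la \otimes ba$ of $M$ can be evaluated directly on these generators and reduce to relations with a straightforward combinatorial structure. The right-hand side is the specialized quantum D-module of the Gale-dual hypertoric variety, whose quantum connection has been computed in the literature. Matching the two sides then becomes a bookkeeping exercise in Gale duality: the semigroup of equivariant roots for $X$ corresponds to the semigroup of effective curve classes on $\tX^!$, and the two presentations become identical after this relabelling.

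For the Springer case, $\scrA$ is a finite extension of $U\fg$ with $X$ the nilpotent cone, and $\tX^!$ is the Springer resolution for the Langlands dual $\fgd$. The weight-zero part of $\scrA$ is built from $U\fh$ together with the centre, and $\scrA_0^2$ is spanned by a suitable Casimir together with quadratic Cartan elements. Computing $M$ amounts to describing how the graded trace map factors through explicit operators on $\Sym\fh$; the natural strategy is to test the relations against modules whose graded characters are explicit (Verma or Wakimoto-type), and then to exploit the fact that such traces separate points in $M$ after inverting the roots. The dual side is handled by the theorem of Braverman--Maulik--Okounkov, which identifies the specialized quantum D-module of $\tX^!$ with the trigonometric Casimir connection for $\fgd$. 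The final step is to match the differential operators coming from graded traces with those defining the BMO connection, under the Langlands interchange of roots and coroots.

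The main obstacle will be the Springer case. The hypertoric side is essentially combinatorial, reducing to an explicit comparison of two presentations related by Gale duality. In the Springer case, by contrast, one must connect the representation-theoretic definition of $M$ (traces on weight modules over an enveloping algebra) to the geometric definition of the quantum connection on the dual Springer resolution. This requires a careful analysis of the formal behaviour of graded traces so that sums of the form $\sum_\mu \tr(a | V_\mu)\, q^\mu$ produce well-defined elements of $M$, together with a precise Harish-Chandra-type computation of how $\scrA_0^2$ acts in weight coordinates which matches the explicit rational functions appearing in the BMO formula after exchanging roots and coroots.
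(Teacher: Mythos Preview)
Your hypertoric plan matches the paper almost exactly: compute $J$ explicitly using the concrete generators $m^\la$ and the formula $m^\la m^{-\la}=[a]^\la$, reduce the generators of $J$ to those indexed by positive cocircuits (the paper's Proposition~\ref{JJ'}), compute the annihilator of $1\otimes 1$ in $\Qreg^!$ from the known hypertoric quantum connection (citing \cite{McBS}), and then identify the two presentations under Gale duality.

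For the Springer resolution your outline is in the right spirit but misses two structural ingredients that the paper relies on.  First, the paper does not work with $\Mreg$ and $\Qreg^!$ directly but passes to $W$-invariants, proving Conjecture~\ref{piW} and then deducing Conjecture~\ref{pi}; this reduces the problem to comparing two modules over $\Dreg\otimes(\Sym\ft)^W$, each of which is shown to be free of rank one over $\Dreg$ alone, so that the only content is matching the $(\Sym\ft)^W$-action.  Second, on the quantum side the key input is not just the BMO formula for the connection but the Cherednik--Matsuo theorem identifying the AKZ connection (to which BMO is conjugate after a twist by the Weyl denominator $\delta$) with the trigonometric Calogero--Moser system at the relevant parameter; this is what produces the relation $\delta^{-1}\iota(a_\rho)\delta\cdot(1\otimes 1)=1\otimes a$ in $(\Qreg^!)^W$, matching the geometric Harish-Chandra formula on the $\Mreg^W$ side.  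Your proposal to test against Verma or Wakimoto characters is close to what happens---the paper uses characters of finite-dimensional $G$-irreps, and the crucial lemma is that these characters detect any differential operator in $D_\hbar(\Treg)$ (Corollary~\ref{diszero})---but the identification you describe as ``matching the BMO formula after exchanging roots and coroots'' is where the real work lies, and it goes through Cherednik--Matsuo rather than a direct comparison.
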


In addition to being interesting in its own right, the quantum Hikita conjecture relates to various previous conjectures by specializing $q$.
If we set $q$ equal to zero, then $M$ turns into $B$-algebra of $\scrA$ (Proposition \ref{M0} and
Remark \ref{Nak-abelian}), and our conjecture specializes to a version of the equivariant Hikita conjecture (Remark \ref{Nak-Hik}).
On the other hand, setting $q$ equal to 1, $M$ turns into the degree zero Hochschild homology of $\scrA$,
which is conjecturally related to the intersection cohomology of $X^!$ \cite[Conjecture 3.6]{Pro12}.  Similarly, the quantum cohomology
of $\tX^!$ at $q=1$ is also conjecturally related to the intersection cohomology of $X^!$ \cite[Conjecture 2.5]{McBP}.
Thus our conjecture provides a bridge between these two previous conjectures of the second and third authors (Remark \ref{whoa}).

\begin{remark}One of the original motivations for this work was the case where $X$ (respectively $X^!$) is the Coulomb 
(respectively Higgs) branch of a
3-dimensional $N=4$ supersymmetric gauge theory. In this case, there is a clear heuristic for the relation between the
module of graded traces for $X$ and the specialized quantum D-module of $\tX^!$.
Indeed, in this case the specialized quantum D-module is encoded in the differential relations satisfied by a certain function
called the $I$-function. The $I$-function is a generating function for equivariant volumes of moduli spaces of quasimaps
from a rational curve into $\tX^!$. These quasimaps, in turn, are closely related to the moduli spaces used to define $X$ in \cite{BFN}.  
In a forthcoming paper, the first author together with Justin Hilburn and Alex Weekes will prove that the quantization of $ X $ acts on the homology of certain quasimap moduli spaces for $ \tX^!$. 
\end{remark}

\begin{remark}A different possible line of investigation is to replace the equivariant cohomology of $\tX^!$ by its equivariant $K$-theory. Then the specialized quantum D-module must be replaced by a module over difference operators, which has in many respects proved to be an even richer object \cite{okounkov2016quantum, aganagic2018quantum, okounkov2015enumerative}. It would be interesting to see how our conjecture adapts to this setting.
\end{remark}

\vspace{\baselineskip}
\noindent
{\em Acknowledgments:}
The authors would like to thank Alexander Braverman for proposing the problem of formulating a quantum version of the Hikita conjecture and for many helpful discussions. 
The authors are also grateful to Roman Bezrukavnikov, Pavel Etingof, Davide Gaiotto, Sam Gunningham, Ivan Losev, Davesh Maulik, Hiraku Nakajima, Andrei Negut, Andrei Okounkov, Peng Shan and Ben Webster for stimulating conversations.  JK was supported by an NSERC discovery grant.
MM completed part of this work at the Massachusetts Institute of Technology, 
the Hausdorff Center for Mathematics, and the Yau Mathematical Sciences Center.
NP was supported by NSF grant DMS-1565036 and would like to thank le Ch\^ateau de Trintange for its hospitality
during the last stages of the completion of this manuscript.

\section{Conical symplectic singularities}\label{sec:cones}
Let $X$ be a {\bf conical symplectic singularity of weight two}.
By this we mean that
$X$ is a normal affine Poisson variety over $\C$ equipped with an action of $\cs$ satisfying the following conditions:
\begin{itemize}
\item the coordinate ring $\mathcal{O}(X)$ is non-negatively graded by the action of $\cs$, with the degree zero part
consisting only of constant functions and the degree one part being zero\footnote{This last condition rules out
the degenerate example $X = \C^2$, or anything with a factor of $\C^2$.}
\excise{
\nicktodo{I'm not sure exactly where to make this observation, but here is why I want to assume that $\mathcal{O}(X)^1 = 0$.
In Section \ref{sec:duality}, we say that we want an isomorphism $\mathcal{O}(X)_0^2 \cong \ft \cong H^2(\tX^!; \C)$.
But the basic Hikita conjecture says that $H^2(\tX^!; \C)$ is isomorphic to the quotient of $\mathcal{O}(X)_0^2$
by products of elements of degree 1 with opposite nonzero weights.  I don't want there to be any such products,
and there won't be if there are no nonzero functions in degree 1.}
}
\item the Poisson bracket has degree -2 with respect to this grading
\item the Poisson bracket is induced by a symplectic form $\omreg$ on the smooth locus $\Xreg$
\item for some (equivalently any) projective resolution $\pi:\tX\to X$, the 2-form $\pi^*\omreg$ extends
to a (possibly degenerate) 2-form on $\tX$.
\end{itemize}
Examples include the nilpotent cone of a simple Lie algebra, hypertoric varieties, quiver varieties, and certain
subvarieties of the affine Grassmannian.

\subsection{The Hamiltonian automorphism group}
Let $\mathcal{O}(X)^2$ be the degree 2 part of $\mathcal{O}(X)$.  Since the Poisson bracket on $\mathcal{O}(X)$ has degree -2,
$\mathcal{O}(X)^2$ is a Lie subalgebra of $\mathcal{O}(X)$.  This Lie algebra acts by graded endomorphisms on $\mathcal{O}(X)$.
Assume that there exists a reductive group $ \Aut(X)$, whose Lie algebra is $ \mathcal{O}(X)^2 $, and which acts faithfully by Poisson automorphisms on $\mathcal{O}(X)$, integrating the action of $\mathcal{O}(X)^2$.

\begin{remark}
If $X$ admits a symplectic resolution $\tX$, then the Lie algebra $\mathcal{O}(X)^2$
may be identified with the Lie algebra of Hamiltonian vector fields on $\tX$.
For this reason, we refer to $\Aut(X)$ as the {\bf Hamiltonian automorphism group} of $X$.
If, in addition, $\tX$ admits a hyperk\"ahler metric compatible with the symplectic form, then
we expect $\Aut(X)$ to be the complexification of the group of hyperk\"ahler automorphisms of $\tX$.
This gives at least a heuristic reason to believe that the Lie algebra $\mathcal{O}(X)^2$ integrates to a reductive group.
\end{remark}

Let $T\subset \Aut(X)$ be a maximal torus, and let $\ft := \Lie(T)$ be the Lie algebra of $T$.
The action of $T$ on $X$ induces a second grading on coordinate ring $\mathcal{O}(X)$ by the group $\ft^*_\Z := \Hom(T,\cs)$.
Since the action of $T$ commutes with the action of $\cs$, this second grading is
compatible with the grading by $\N$.
For any $\la\in\ft^*_\Z$ and $k\in\N$, we let define $\mathcal{O}(X)_\la$, $\mathcal{O}(X)^k$, and $\mathcal{O}(X)_\la^k := \mathcal{O}(X)_\la\cap\mathcal{O}(X)^k$ to be the corresponding
isotypic components for the actions of $T$, $\cs$, and $T\times\cs$, respectively.
Since $\Aut(X)$ is reductive, the zero root space $\mathcal{O}(X)^2_0\subset\mathcal{O}(X)^2$ is equal to the Cartan subalgebra $\ft\subset\mathcal{O}(X)^2$.

\excise{
\begin{proposition}\label{Poisson-Lie}
We have $\ft=\mathcal{O}(X)^2_0\subset\mathcal{O}(X)^2$.
\end{proposition}

\begin{proof}
We have assumed that the Lie group $\Aut(X)$ is reductive, therefore the Cartan subalgebra coincides with the zero root space.
\end{proof}
}

\subsection{Deformation and quantization} \label{DeformQuant}
Choose a $\Q$-factorial terminalization $\tX$ of $X$, as in \cite[Proposition 2.3]{Losev-orbit}, and consider the smooth locus
$\tXreg\subset\tX$.
Let $\tilde\scrX$ be the universal filtered Poisson deformation of $\tX$, which has base $H^2(\tXreg; \C)$.  Let $\scrX := \Spec\cO(\tilde\scrX)$,
which is a deformation of $X$ over $H^2(\tXreg; \C)$.
Two different choices of $\tX$ will yield two isomorphic families $\scrX$, and the isomorphism between them is canonical up to the action of the Namikawa Weyl group \cite[Corollary 2.13]{Losev-orbit}.

Let $\scrA$ be the canonical quantization of $\scrX$.
This is an $\N$-graded algebra over the ring $\Sym H_2(\tXreg; \C)\otimes\C[\hbar]$,
with $H_2(\tXreg; \C)$ and $\hbar$ both in degree 2.  If we set $\hbar$ equal to 1, we obtain the canonical filtered
quantization of \cite[Proposition 3.3]{Losev-orbit}.  The existence of such a quantization follows
from the work of Bezrukavnikov-Kaledin and Losev; see \cite[Sections 3.1-3.3]{BLPWquant} for details.

Let $\scrA^2$ denote the degree 2 part of $\scrA$.  This is naturally a Lie algebra, with Lie bracket given by
$\hbar^{-1}$ times the commutator.  The centre of the Lie algebra $\scrA^2$ contains $H_2(\tXreg; \C)\oplus\C\hbar$,
and the quotient of $\scrA^2$ by this subalgebra is canonically isomorphic to $\mathcal{O}(X)^2$.  That is, we have an exact sequence of Lie
algebras
\begin{equation}\label{first exact}0\to H_2(\tXreg; \C)\oplus\C\hbar \to \scrA^2 \to \mathcal{O}(X)^2 \to 0,\end{equation}
where $H_2(\tXreg; \C)\oplus\C\hbar$ is endowed with the trivial Lie bracket.
For any $x\in\scrA^2$, let $\bar x$ denote the image of $x$ in $\mathcal{O}(X)^2$.

The Lie algebra $\scrA^2$ acts on $\scrA$ by $\hbar^{-1}$ times the commutator; furthermore, the central subalgebra
$H_2(\tXreg; \C)\oplus\C\hbar$ acts trivially,
so we obtain an action of the Lie algebra $\mathcal{O}(X)^2$ on $\scrA$.  Since the action of $\mathcal{O}(X)^2$ on $\mathcal{O}(\scrX)$ integrates
to an action of $\Aut(X)$ and $\scrA$ is a flat deformation of $\mathcal{O}(\scrX)$ over the affine line, the action of $\mathcal{O}(X)^2$ on $\scrA$
also integrates to an action on $\Aut(X)$.
This endows $\scrA$ with a direct sum decomposition $$\scrA = \bigoplus_{\la\in\ft^*_\Z}\scrA_\la$$ into $T$-weight spaces,
where
$$\scrA_{\la} := \{a\in\scrA\mid [x,a] = \hbar\langle\la,\bar x\rangle a\;\;\text{for all $x\in\scrA^2_0$}\}.$$
This decomposition is compatible with the grading by $\N$.
Taking zero weight spaces in the exact sequence \eqref{first exact}, we obtain an exact sequence
\begin{equation}\label{eq:quant-exact}
0\to H_2(\tXreg; \C)\oplus\C\hbar \to \Azo \to \ft \to 0,
\end{equation}
which we call the {\bf quantization exact sequence}.  This exact sequence will play a major role
in the formulation of our main conjecture.

\begin{remark} \label{rem:qex}
Since $\Azo$ is abelian, the quantization exact sequence splits.
Choosing a splitting is equivalent to choosing a quantum
comoment map for the action of $T$ on $\scrA$.
\end{remark}

There are two main examples which we will work with in this paper: hypertoric varieties and the Springer resolution.

\begin{example}\label{ht example}
Suppose that $X$ is the affine hypertoric variety obtained as a symplectic quotient of
$T^*\C^n$ by a subtorus $K\subset\cs^n$.  Then we may take $T = \cs^n/K$, and $\scrA$ is isomorphic to the {\bf hypertoric enveloping algebra}
(Section \ref{sec:hea}).
If $y_i$ is the $i^\text{th}$ coordinate function on $\C^n$, then
$\Azo$ has basis $\{\hbar,y_1\partial_1,\ldots,y_n\partial_n\}$.
The map from $\Azo/\C\hbar \cong\Lie(\cs^n)$ to $\ft$ is induced by the map of algebraic groups from $\cs^n$ to $T$.
This example will be studied in greater detail in Section \ref{sec:hypertoric}.
\end{example}

\begin{example}\label{Springer example}
Let $ G $ be a semisimple complex group and let $ X \subset \fg^*$ be the union of those coadjoint orbits that are preserved by dilations.
If we use the Killing form to identify $\fg^*$ with $\fg$, then $X$ is taken to the nilpotent cone of $\fg$, so we will refer to $X$ as the {\bf nilpotent cone}.
Then $X$ is a conical symplectic singularity of weight two, where the Poisson structure comes from restricting the usual Poisson structure on $\mathfrak g^* $ and the action of $\cs$ is by the inverse square of scalar multiplication.
Furthermore, $X$ admits a symplectic resolution $\tX = T^* (G/B) $, known as the {\bf Springer resolution}.   The group $\Aut(X)$ is 
isomorphic to the quotient of $ G $ by its centre, and $ T $ is the quotient of the maximal torus of $ G $ by the centre of $ G $.
The group $H^2(\tX; \C)$ is canonically isomorphic to $\ft^*$.
The universal Poisson deformation $ \tilde\scrX$ is isomorphic to the Grothedieck-Springer resolution $\tilde\fg^* $, and its affinization $ \scrX$ is isomorphic to $\fg^* \times_{\ft^*/W} \ft^* $.
The canonical filtered quantization is identified with the {\bf enhanced enveloping algebra} $ \mathcal{A} := U\fg \otimes_{Z(U\fg)} \Sym \ft $, and
$\scrA$ is the Rees algebra with respect to the PBW filtration.
The space $\Azo$ is generated by $\hbar$ and vectors of the form $x_1\otimes 1 + 1\otimes x_2$ for $x_1,x_2\in\ft$.
The map from $\Azo$ to $\ft$ in the quantization exact sequence takes $\hbar$ to 0 and $x_1\otimes 1 + 1\otimes x_2$ to $x_1$.
\end{example}

\section{Algebraic construction}\label{sec:algebra}
In this section we fix a conical symplectic singularity $X$ and a $\Q$-factorial terminalization $\tX$,
and we use the canonical quantization $\scrA$ from Section \ref{DeformQuant} to define the D-module of graded traces.

\subsection{Equivariant roots}\label{sec:eq-roots}
Let $\scrA^+\subset \scrA$ be the two-sided ideal spanned by classes of positive degree with respect to the $\mathbb{N}$-grading,
and consider the $T$-vector space $\scrA^+/(\scrA^+\cdot\scrA^+)$.  Let $\Sigma\subset\ft_\Z^*$ be the set of nonzero weights of
$\scrA^+/(\scrA^+\cdot\scrA^+)$.  Motivated by \cite[Definition 3.1]{okounkov2015enumerative}, we will refer to $\Sigma$ as the set of {\bf equivariant roots} of $X$.

\begin{remark}
Setting $\hbar$ equal to zero gives a canonical surjective map of $T$-representations
from $\scrA^+/(\scrA^+\cdot\scrA^+)$ to the Zariski cotangent space to $\scrX$
at the unique $(T\times\cs)$-fixed point; we expect this map to be an isomorphism.  This in turn maps
to the Zariski cotangent space to $X$, inducing a bijection on nonzero weights.
Okounkov defines the equivariant roots by choosing a symplectic resolution (if it exists) and taking the union of the nonzero
weights in the cotangent spaces of {\em all} of the $T$-fixed points of the resolution.\footnote{Okounkov uses tangent
spaces rather than cotangent spaces, but the weights are the same, since the action of $T$ preserves the symplectic form.}
We expect that our definition will coincide with Okounkov's when a symplectic resolution exists.
\end{remark}

Fix a cocharacter $\xi\in\ft_\Z$ such that $\langle \la,\xi\rangle \neq 0$ for all $\la\in \Sigma$, and let
$$\Sigma_+ := \{\la\in\Sigma\mid \langle \la,\xi\rangle > 0\}.$$
We will call elements of $\Sigma_+$ {\bf positive equivariant roots}.
Let $$\scrA_+ := \bigoplus_{\langle\la,\xi\rangle>0}\scrA_\la \and \scrA_- := \bigoplus_{\langle\la,\xi\rangle<0}\scrA_\la.$$
The following lemma says that the right ideal of $\scrA$ generated by $\scrA_+$ is in fact generated by the elements
of $\scrA_\la$ for $\la\in\Sigma_+$.

\begin{lemma} \label{putrootinfront}
If $ a \in \scrA^m_+$, 
then there exist positive equivariant roots $\la_1,\ldots,\la_n$ (possibly not distinct),
along with elements $y_i\in\scrA_{\la_i}$
and $z_i\in\scrA$ for all $i$, such that
$$a = \sum_{i=1}^n y_i z_i.$$
\end{lemma}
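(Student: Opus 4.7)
The plan is to proceed by strong induction on the $\N$-degree $m$. Decomposing $a$ into $T$-isotypic components, we may assume $a$ has a single $T$-weight $\lambda$, necessarily with $\langle \lambda, \xi \rangle > 0$. If $\lambda$ already lies in $\Sigma_+$ the trivial choice $y_1 = a$, $z_1 = 1$ suffices, so assume $\lambda \notin \Sigma$. Since $\lambda \neq 0$, the definition of $\Sigma$ then forces the $\lambda$-weight subspace of $\scrA^+/(\scrA^+ \cdot \scrA^+)$ to vanish, so $a \in \scrA^+ \cdot \scrA^+$, and we write $a = \sum_i b_i c_i$ with each $b_i, c_i \in \scrA^+$ of pure weights $\mu_i, \nu_i$ summing to $\lambda$ and of $\N$-degree strictly less than $m$.

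Since $\langle \lambda, \xi \rangle > 0$, each summand has at least one factor of strictly positive $\xi$-weight. When $\langle \mu_i, \xi \rangle > 0$, the inductive hypothesis applied to $b_i$ produces $b_i = \sum_j y_{ij} z_{ij}$ with $y_{ij}$ of weight in $\Sigma_+$, and then $b_i c_i = \sum_j y_{ij}(z_{ij} c_i)$ is already of the desired form. When instead $\langle \mu_i, \xi \rangle \le 0$ and $\langle \nu_i, \xi \rangle > 0$, I would swap the factors via $b_i c_i = c_i b_i + [b_i, c_i]$, so that $c_i b_i$ is treated exactly as above by induction on $c_i$.

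The delicate piece is the commutator $[b_i, c_i]$, and this is the step I expect to be the main obstacle. Because $\scrA/\hbar\scrA \cong \cO(\scrX)$ is commutative and $\hbar$ is a weight-zero central element of $\N$-degree $2$, one has $[b_i, c_i] = \hbar d_i$ with $d_i \in \scrA^{m-2}$ of weight $\lambda$, hence $d_i \in \scrA_+^{m-2}$. The $\N$-degree of $d_i$ is strictly less than $m$, so the inductive hypothesis applies: writing $d_i = \sum_j y_{ij} z_{ij}$ gives $[b_i, c_i] = \sum_j y_{ij}(\hbar z_{ij})$, closing the induction. The base cases $m \le 1$ are vacuous, since $\scrA^+ \cdot \scrA^+$ sits in degrees $\ge 2$ and any $a \in \scrA_+^m$ with $m \le 1$ either has $\lambda \in \Sigma_+$ or must vanish.
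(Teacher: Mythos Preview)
Your proof is correct and follows essentially the same approach as the paper's: induction on the $\N$-degree $m$, reduction to a single $T$-weight $\lambda$, the dichotomy between $\lambda\in\Sigma_+$ and $a\in\scrA^+\cdot\scrA^+$, and the commutator trick $b_ic_i = c_ib_i + \hbar d_i$ to swap factors when the left factor has non-positive $\xi$-weight. The only cosmetic difference is that the paper branches on whether $a\in\scrA^+\cdot\scrA^+$ (deducing $\lambda\in\Sigma_+$ when it is not), whereas you branch directly on whether $\lambda\in\Sigma_+$; these are equivalent.
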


\begin{proof}
We proceed by induction on $m$.  We may assume that $a\in\scrA_\mu^m$ for some
$\mu\in\ft^*_\Z$ with $\langle\mu,\xi\rangle>0$, and we may assume
that the statement holds for all elements of $\scrA_+^k$ when $k<m$.

If $a\notin\scrA^+\cdot\scrA^+$, then $a$ represents a nonzero element of $ \scrA^+ / (\scrA^+\cdot\scrA^+)$, in which case $\mu\in\Sigma_+$
and we are done.  Thus we may assume that $ a \in \scrA^+\cdot\scrA^+$.  This means that we can write
$a = \sum_j b_jc_j,$ where
$b_j\in \scrA_{\mu_j}^{p_j}$ and $c_j \in \scrA_{ \mu - \mu_j}^{m-p_j}$ for some elements $\mu_j\in\ft^*_\Z$
and $p_j\in\N$ with $0<p_j<m$ for all $j$.
If $\langle\mu_j,\xi\rangle>0$, then we may apply our inductive hypothesis to $b_j$, and thus write
$b_j c_j$ in the desired form.
Alternatively, if $\langle\mu_j,\xi\rangle\leq 0$, then $ \langle\mu - \mu_j,\xi\rangle\geq\langle\mu,\xi\rangle >  0 $,
so we may apply our inductive hypothesis to $c_j$.  Finally, we note that
$$b_j c_j = c_j b_j + [b_j, c_j] = c_j b_j + \hbar d_j$$ for some $d_j \in \scrA_{\mu}^{m-2}$.
Applying our inductive hypothesis to both $c_j$ and $d_j$, we may again write $b_j c_j$ in the desired form.
\end{proof}

\subsection{The ring \boldmath{$R$}}\label{sec:ring}
Let $$S := \C\{q^\lambda \mid \lambda \in \N\Sigma_+\}
\subset \C\{q^\lambda \mid \lambda \in \ft^*_\Z\}\cong \mathcal{O}(T).$$
The fact that $T$ acts effectively on $X$ implies that $\Sigma_+$ spans $\ft^*_\Z$, and therefore that $\Spec S$ is a
(possibly non-normal) affine $T$-toric variety with a unique fixed point $0\in\Spec S$.

\excise{
\begin{remark}
There are three closely related rings that we could have defined:
$$S := \C\{q^\lambda \mid \lambda \in \sigma\,\cap\, \ft^*_\Z\}, \qquad
S' := \C\{q^\lambda \mid \lambda \in \sigma\,\cap\, \Z\Sigma\},\and
S'' := \C\{q^\lambda \mid \lambda \in \N\Sigma_+\}.$$
We {\em a priori} have $S''\subset S' \subset S$.  In the case of the Springer resolution, the three rings coincide.
In the case of a hypertoric variety, $S = S'$ (Lemma \ref{basis}), but we do not know whether or not $S' = S''$.
We do not know of any examples (hypertoric or otherwise) for which the three rings do not coincide.

The constructions in this paper can be carried out using any of the three rings, but we prefer to use $S$ so that $\Spec S$ is a normal $T$-toric variety.
If $S \neq S'$, then $\Spec S'$ is a toric variety for a finite quotient of $T$.  If $S'\neq S''$, then $\Spec S''$ is not normal.
\end{remark}
}

Let $$R := S\otimes \Sym\Azo,$$
which we endow with a $\C[\hbar]$-algebra structure by setting $$x\, q^\lambda = q^\lambda \big(x + \hbar \langle\la,\bar x\rangle\big)$$
for all $\la\in\N\Sigma_+$ and $x\in \Azo$.  The ring $R$ is $\mathbb{N}$-graded, with $S$ in degree zero.

\begin{remark}\label{diff-ops}
For any $c\in H^2(\tXreg; \C)$, let $R^c$ be the quotient of $R$ by the ideal generated by $\theta - \hbar \langle \theta,c\rangle$ for all $\theta\in H_2(\tXreg; \C)\subset\Azo$, and let $R^c_{T}$ be the ring obtained from $R^c$ by
localizing from $\Spec S$ to $T$.  Then $R^c_{T}$ is (non-canonically) isomorphic to
the Rees algebra of differential operators on $T$, filtered by order.
If we choose a splitting of the quantization exact sequence \eqref{eq:quant-exact},
then we obtain a ring isomorphism by sending an element of $\ft$ to $\hbar$ times the corresponding translation invariant vector field on $T$.  For this reason, we think of $ R $ as a ring of differential operators with values in $H_2(\tXreg; \C)$.
\end{remark}

In the sections that follow,
we will be particularly interested in the localization $$\Sreg := S\!\left[\textstyle\frac{1}{1-q^{\la}}\;\Big{|}\; \la\in\Sigma_+\right].$$
We will also need to invert the same collection of elements in $R$; this requires care since $R$ is non-commutative. Let $\frak{S} \subset R$ be the multiplicative subset generated by $(1-q^{\la}), \la \in \Sigma_+$.

\begin{lemma} \label{orecondition}
The set $\frak{S}$ satisfies the Ore condition.  That is, for any $s \in \frak{S}$ and $r \in R$,
 there exists $s' \in \frak{S}$ and $r' \in R$ such that $s' r = r' s$.
\end{lemma}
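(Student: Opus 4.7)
The plan is to reduce the Ore condition to a single case and then prove it by induction on a natural filtration of $R$. First, if the condition holds whenever $s$ is a generator $1-q^\lambda$ (for $\lambda \in \Sigma_+$), then it holds for any $s \in \frak{S}$: given $s = s_1 \cdots s_n$ with each $s_i$ a generator, we iterate, finding $s_i' \in \frak{S}$ and $r_i \in R$ with $s_i' r_{i-1} = r_i s_i$, and combine using the fact that all the $s_i$ and $s_i'$ lie in the commutative subring $S$ (so the various generators on either side can be reordered freely at the end).

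Next I would filter $R$ by the subspaces $F^k R := S \otimes \Sym^{\leq k}\Azo$. The key calculation is that for $x \in \Azo$ and $s = 1-q^\lambda$,
\[
[s, x] \;=\; -[q^\lambda, x] \;=\; \hbar\,\langle \lambda, \bar x\rangle\, q^\lambda \;\in\; S,
\]
obtained directly from the defining relation $x q^\lambda = q^\lambda(x+\hbar\langle\lambda,\bar x\rangle)$. This shows that the commutator $[s,-]$ sends $F^k R$ into $F^{k-1}R$. Also, $s$ commutes with everything in $S = F^0 R$. One then verifies (again using the commutativity of $S$) that the set of $r \in R$ satisfying Ore with respect to a fixed generator $s$ is closed under $\C$-linear combinations, which reduces the problem to monomials $r = q^\mu \cdot x_1 \cdots x_k$; since $q^\mu \in S$ commutes with $s$, we may even assume $r = x_1 \cdots x_k \in \Sym^k \Azo$.

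The induction on $k$ is now straightforward. For $k=0$ the element $r$ lies in $S$ and commutes with $s$. For the inductive step, expand
\[
s \cdot x_1 \cdots x_k \;=\; x_1 \cdots x_k \cdot s \;+\; r'',
\]
where $r'' \in F^{k-1} R$ is obtained by collecting the terms involving one commutator $[s,x_i] = \hbar\langle\lambda,\bar x_i\rangle q^\lambda$. By the inductive hypothesis applied to $r''$, there exist $N'$ and $r''' \in R$ with $s^{N'} r'' = r''' s$. Then
\[
s^{N'+1}\, x_1\cdots x_k \;=\; s^{N'}\bigl(x_1\cdots x_k \cdot s + r''\bigr) \;=\; \bigl(s^{N'} x_1 \cdots x_k + r'''\bigr) s,
\]
which places $s^{N'+1} r \in R\cdot s$, as required. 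The main (modest) obstacle is just carefully checking that all the bookkeeping — degree reduction under $[s,-]$, closure under sums via commutativity in $S$, and compatibility across different generators — fits together; none of the individual steps is difficult, but it is worth isolating the commutator computation and the filtration statement before running the induction.
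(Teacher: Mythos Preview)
Your approach is essentially the paper's: both arguments show that $\operatorname{ad}(s)$ is locally nilpotent on $R$, which is the standard reason Ore localization works for rings of differential operators. The paper packages this differently, proving directly that $[r,(1-q^\lambda)^N]$ is right-divisible by $(1-q^\lambda)^{N-\deg(r)/2}$ (in the paper's $\N$-grading with $\Azo$ in degree $2$) and then handling a general $s\in\frak{S}$ in one step rather than first reducing to generators.

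There is one technical slip. You write $[s,x]=\hbar\langle\lambda,\bar x\rangle q^\lambda\in S$, but $\hbar\in\Azo$, not $S$, so in fact $[s,x]\in F^1R$ with your filtration $F^kR=S\otimes\Sym^{\le k}\Azo$. Consequently $[s,-]$ does \emph{not} send $F^kR$ into $F^{k-1}R$, and the inductive hypothesis does not apply to $r''=[s,x_1\cdots x_k]$ as stated. The fix is immediate: since $\hbar$ is central in $R$, filter instead by $S[\hbar]\otimes\Sym^{\le k}V$ for any complement $V$ to $\C\hbar$ in $\Azo$; then $[s,x]\in S[\hbar]$ genuinely has filtration degree $0$ and your induction runs exactly as written. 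Equivalently, observe that $r''=\hbar r'''$ with $r'''\in F^{k-1}R$, and Ore for $r'''$ gives Ore for $r''$.
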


\excise{
\begin{proof}
Lemma \ref{orecondition} holds essentially for the same reason that one can localize differential operators on a variety to any open subset. Namely, it is enough to check that the operator $\operatorname{ad} s$ is locally nilpotent for all $s \in \frak{S}$. We can filter $R$ by the degree in $x$ (placing both $S$ and $\C[\hbar]$ in degree zero). Then $\operatorname{ad} s$ strictly decreases degree with respect to this filtration, which shows it is locally nilpotent.
\end{proof}
\michtodo{I propose the above shortened proof (compare with the proof below). Do you think it's OK to cite Ginzburg's (somewhat casual) lecture notes for the nilpotency condition? (see http://www.math.ubc.ca/~cautis/dmodules/ginzburg.pdf, 1.3.8).}
}

\begin{proof}
For any homogeneous $r\in R$ let $\deg(r)\in \N$ be its degree. First, we claim that, for any $N\geq \deg(r)$, the commutator $[r, (1-q^{\la})^N]$ is right
divisible by $(1-q^\la)^{N-\deg(r)/2}$.  We prove this claim by induction on the degree of $r$.
When $\deg(r)=2$, we can immediately reduce to the case where $r=x\in\Azo$, and we have
\begin{equation} \label{degonecomm} [x, (1-q^{\la})^N] = -N\hbar\langle \la, \bar{x} \rangle q^\la (1-q^\la)^{N-1}. \end{equation}
Suppose our claim holds for all homogeneous elements of $R$ of degree less than $n=\deg(r)$.
We may reduce to the case where $r = r'x$ for some $x \in \Azo$ and $r' \in R$ with $\deg(r') = n-2$. Then
\begin{align*} [r,(1-q^{\la})^N] & = r'x(1-q^{\la})^N - (1-q^{\la})^N r'x \\
& = r'x(1-q^{\la})^N - r'(1-q^{\la})^N x + [r',(1-q^{\la})^N]x.
\end{align*}
Our inductive hypothesis tells us that there is some $r''\in R$ such that
$$[r', (1-q^\la)^N] = r''(1-q^\la)^{N-n/2+1},$$ thus
$$[r,(1-q^{\la})^N] = r'x(1-q^{\la})^N - r'(1-q^{\la})^N x + r''(1-q^\la)^{N-n/2+1} x.$$
The claim then follows from two applications of Equation \eqref{degonecomm}.

We return to our lemma.  We may assume that $r$ is homogeneous.  
Write $$s := \prod_{\la \in \Sigma_+} (1- q^{\la})^{N_\la},$$ and let $$s' := \prod_{\la \in \Sigma_+} (1- q^{\la})^{M},$$
where $M := \deg(r)/2  + \max\{N_{\la}\mid \la\in\Sigma_+\}.$
We have $s' r = rs' - [r, s']$, and our claim implies that $[r, s']$ is right divisible by
$\prod_{\la \in \Sigma_+} (1- q^{\la})^{N_\la} = s.$ This concludes the proof.
\end{proof}
We can thus define the Ore localization $R_\reg := \frak{S}^{-1} R$, which is isomorphic as a graded vector space to $\Sreg\otimes \Sym\Azo$.

\subsection{The module \boldmath{$M$}}
We endow $S \otimes \scrA_0$ with the structure
of an $\N$-graded left $R$-module by putting
$$x \cdot (q^\la \otimes a) :=  q^\la \otimes \big(x + \hbar\langle\la,\bar x\rangle\big) a \and
q^\mu \cdot (q^\la \otimes a) := q^{\la+\mu}\otimes a$$
for all $x\in\Azo$, $a\in\scrA_0$, and $\la,\mu\in\N\Sigma_+$.
Let
\begin{equation} \label{defJ} J := \sum_{\la\in\N\Sigma_+}S\cdot\left\{ 1\otimes a b - q^\lambda \otimes b a \mid a \in \scrA_\lambda, b \in \scrA_{-\lambda}\right\}
\subset S\otimes \scrA_0.\end{equation}
A priori, $J$ is a graded $S$-submodule of $S\otimes\scrA_0$.  Proposition \ref{Rsub} says that it is in fact an $R$-submodule.

\begin{proposition}\label{Rsub}
$J$ is an $R$-submodule of $S \otimes \scrA_0$.
\end{proposition}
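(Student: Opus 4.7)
The plan is to reduce to a direct computation. By construction, $J$ is already an $S$-submodule of $S\otimes \scrA_0$, and $R$ is generated as a $\C[\hbar]$-algebra by $S$ together with $\Azo$. Thus it suffices to show that for every $x\in\Azo$, every $\la\in\N\Sigma_+$, and every pair $a\in\scrA_\la$, $b\in\scrA_{-\la}$, the element
\[
x\cdot\bigl(1\otimes ab - q^\la\otimes ba\bigr)
\]
lies in $J$.

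First I would unpack the $R$-module structure: by definition
\[
x\cdot\bigl(1\otimes ab - q^\la\otimes ba\bigr) \;=\; 1\otimes xab \;-\; q^\la\otimes\bigl(x + \hbar\langle\la,\bar x\rangle\bigr)ba.
\]
Since $x\in\scrA_0$, the product $xa$ still lies in $\scrA_\la$, so
\[
1\otimes (xa)b - q^\la\otimes b(xa) \;=\; 1\otimes xab - q^\la\otimes bxa
\]
is a generator of $J$. Subtracting this from the previous expression leaves
\[
q^\la\otimes\bigl(bxa - xba\bigr) \;-\; \hbar\langle\la,\bar x\rangle\, q^\la\otimes ba
\;=\; q^\la\otimes [b,x]a \;-\; \hbar\langle\la,\bar x\rangle\, q^\la\otimes ba.
\]

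The key step is to evaluate $[b,x]$ using the weight property. Since $b\in\scrA_{-\la}$ and $x\in\scrA_0^2$, the definition of $\scrA_{-\la}$ as the $\la$-weight space gives $[x,b] = -\hbar\langle\la,\bar x\rangle b$, hence $[b,x] = \hbar\langle\la,\bar x\rangle b$. Substituting, the two remaining terms cancel exactly, and we conclude that
\[
x\cdot\bigl(1\otimes ab - q^\la\otimes ba\bigr) \;=\; 1\otimes (xa)b - q^\la\otimes b(xa) \;\in\; J.
\]

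I do not expect any serious obstacle here: the proof is a single compatibility calculation between the $R$-action (which twists $x$ past $q^\la$ by $\hbar\langle\la,\bar x\rangle$) and the commutation relation in $\scrA$ (which moves $x$ past $b\in\scrA_{-\la}$ at the cost of exactly the same scalar). The only thing one must be careful about is bookkeeping of weights and signs; once those are correct, the two twists cancel and the result drops out.
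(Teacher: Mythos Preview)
Your proof is correct and follows essentially the same approach as the paper: both reduce to checking that $x\cdot(1\otimes ab - q^\la\otimes ba)$ lies in $J$, and both arrive at the identity $x\cdot(1\otimes ab - q^\la\otimes ba) = 1\otimes (xa)b - q^\la\otimes b(xa)$ via the weight relation $[x,b]=-\hbar\langle\la,\bar x\rangle b$. The only cosmetic difference is that the paper simplifies directly while you subtract the target element and show the remainder vanishes; also, note the small slip where you call $\scrA_{-\la}$ the ``$\la$-weight space'' rather than the $(-\la)$-weight space, though your formula for $[x,b]$ is correct.
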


\begin{proof}
It is sufficient to check that, if $x\in \Azo$, $\la\in\N\Sigma_+$, $a\in\scrA_\lambda$, and $b \in \scrA_{-\lambda}$, then
we have $x\cdot(1\otimes a b - q^\lambda \otimes b a)\in J$.  Indeed, we have
\begin{eqnarray*}x\cdot(1\otimes a b - q^\lambda \otimes b a)
&=& 1\otimes xab - q^\la\otimes \big(x + \hbar\langle\la,\bar x\rangle\big) ba\\
&=& 1\otimes xab - q^\la\otimes xba + q^\la\otimes [x,b] a \\
&=& 1\otimes xab - q^\la \otimes bxa,
\end{eqnarray*}
which is an element of $J$.
\end{proof}

Our main object of study will be the graded $R$-module $$M := \left(S \otimes \scrA_0\right) / J,$$ which we call the {\bf D-module of graded
traces} (see Section \ref{sec:traces} for the motivation behind this terminology).  We will be particularly
interested in the localization $$\Mreg := \Rreg\otimes_R M.$$

\begin{example}\label{sl2example1}
	We illustrate these constructions when $X$ is the Kleinian singularity of type $A_1$, or (equivalently) the nilpotent cone in $\sl_2^*$.
	Choose a basis $\{\hbar,a_1,a_2\}$ for $\Azo$ such that
	$\Sigma_+\subset\ft^*$ consists of a single element that evaluates to 1 on both $\bar a_1$ and $\bar a_2$.
	Then $S = \C[q]$ and $R$ is generated over $\C[\hbar]$ by $q$, $a_1$, and $a_2$.  The classes $a_1$ and $a_2$ commute with each other,
	and  $ a_i q = q (a_i + \hbar)$.
	The $\C[\hbar]$-algebra $ \scrA $ has generators $ r_+, r_-, a_1, a_2 $ and relations
	\begin{gather*}
	r_+ r_- = a_1 a_2, \quad r_-r_+ = (a_1 + \hbar)(a_2 + \hbar) \\
	[a_i, r_+] = \hbar r_+, \quad [a_i, r_-] = -\hbar r_-, \quad [a_1,a_2]=0.
	\end{gather*}
	This algebra is an example of a hypertoric enveloping algebra (Section \ref{sec:hea}),
	and it is also the Rees algebra of the enhanced enveloping algebra of $\sl_2$.
	To see this, we identify $r_+$ with $E$, $-r_-$ with $F$, $a_1 + a_2 + \hbar$ with $H$, and $a_1-a_2$ with
	the square root of the central element
	$ C = 2EF + 2FE + H^2 + \hbar^2 $.
	
	Now let us compute the module $ M $.  We have $ \scrA_0 = \C[\hbar, a_1, a_2] $, which implies that $ S \otimes \scrA_0 $ is a free $R$-module
	of rank 1.  So it remains to compute the left ideal $ J \subset S\otimes\scrA_0\cong R$.
	By the definition, $J$ contains the element
	$$ r_+ r_- - qr_- r_+ = a_1 a_2 - q (a_1 + \hbar)(a_2 + \hbar) = a_1a_2(1-q).$$
	It is a special case of Proposition \ref{JJ'} that $J$ is in fact generated by this single element.
\end{example}

\subsection{Specializations and Hochschild homology}\label{sec:specializations}
We will now relate some specializations of $ M $ to the degree zero Hochschild homology of $\scrA$.  Recall that if $ A $ is any ring and $ B $ is an $ (A, A)$-bimodule, then $$
\HH_0(A, B) := B \big{/} \C \{ a b - b a : a \in A, b \in B \}.
$$  
We will write $\HH_0(A) := \HH_0(A,A) $ where $ A $ is the regular bimodule.

We begin by considering the specialization $$M_0 := \C_0\otimes_S M, $$ where the map $S\to\C_0$ is given by setting $q$ equal to zero
(or, equivalently, by evaluation at the unique $T$-fixed point of the toric variety $\Spec S$).  Note that $ M_0 $ carries the structure of a $ \Sym \Azo$ module since $ \Azo $ lies in the centre of $ \scrA_0 $.  This specialization is closely related to the algebra 
 $$B(\scrA) := \scrA_0\Big{/} \sum_{\langle\mu,\xi\rangle > 0}
\left\{a b \mid a \in \scrA_\mu, b \in \scrA_{-\mu}\right\},$$
which was introduced in \cite[Section 5.1]{BLPWgco} to study the representation theory $\scrA$, and which also
appears in the equivariant Hikita conjecture \cite[Conjecture 8.9]{KTWWY}.

\begin{remark}\label{signs}
	The definition of $B(\scrA)$ in \cite[Section 5.1]{BLPWgco} uses the inequality $\langle\mu,\xi\rangle < 0$
	rather than $\langle\mu,\xi\rangle > 0$.  This means that the algebra that we study in this paper is related
	to category $\cO$ for the parameter $-\xi$ rather than $\xi$.
\end{remark}

\begin{proposition}\label{M0}
	We have an isomorphism $M_0 \cong \HH_0(B(\scrA))$ of $\Sym\Azo$-modules.
\end{proposition}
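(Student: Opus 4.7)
The plan is to realize both $M_0$ and $\HH_0(B(\scrA))$ as explicit quotients of $\scrA_0$ by $\Sym\Azo$-submodules, and to show that the two resulting submodules of relations agree. The single nontrivial step is an application of Lemma \ref{putrootinfront}.

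To unpack $M_0$: since $S\otimes\scrA_0$ is free over $S$, tensoring with $\C_0$ sends the generator $1\otimes ab - q^\la\otimes ba$ of $J$ to $[a,b]$ when $\la = 0$ and to $ab$ when $\la\in\N\Sigma_+\setminus\{0\}$ (since then $q^\la\mapsto 0$). Thus $M_0\cong\scrA_0/\bar J$, where $\bar J\subseteq\scrA_0$ is the $\C$-linear span of all commutators $[a,b]$ with $a,b\in\scrA_0$, together with all products $ab$ with $a\in\scrA_\la$, $b\in\scrA_{-\la}$, for nonzero $\la\in\N\Sigma_+$. Since $\Azo$ is central in $\scrA_0$ (because $[x,c]=\hbar\langle 0,\bar x\rangle c = 0$ for $x\in\Azo$ and $c\in\scrA_0$), Proposition \ref{Rsub} guarantees that $\bar J$ is a $\Sym\Azo$-submodule of $\scrA_0$ and that $\Sym\Azo$ acts on $M_0$ by left multiplication. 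Analogously, writing $I_B\subset\scrA_0$ for the $\C$-span of products $ab$ with $a\in\scrA_\mu$, $b\in\scrA_{-\mu}$, $\langle\mu,\xi\rangle > 0$, the definition of degree zero Hochschild homology gives $\HH_0(B(\scrA)) \cong \scrA_0/(I_B + [\scrA_0,\scrA_0])$, where $[\scrA_0,\scrA_0]$ denotes the span of all commutators in $\scrA_0$.

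It then remains to show $\bar J = I_B + [\scrA_0,\scrA_0]$. The inclusion $\subseteq$ is immediate: every nonzero element of $\N\Sigma_+$ pairs strictly positively with $\xi$, being a nontrivial nonnegative combination of elements of $\Sigma_+$, so each product generator of $\bar J$ already lies in $I_B$. The reverse inclusion is where the only real work is required, and it is exactly what Lemma \ref{putrootinfront} is designed to provide: given $a\in\scrA_\mu$ and $b\in\scrA_{-\mu}$ with $\langle\mu,\xi\rangle > 0$, the lemma writes $a = \sum_i y_i z_i$ with $y_i\in\scrA_{\la_i}$, $\la_i\in\Sigma_+$, and $z_i\in\scrA_{\mu-\la_i}$, so that $ab = \sum_i y_i(z_i b)$ with each $z_ib\in\scrA_{-\la_i}$, exhibiting each summand as a product generator of $\bar J$. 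Hence $I_B\subseteq\bar J$, and together with the obvious containment $[\scrA_0,\scrA_0]\subseteq\bar J$ this yields the desired equality and the isomorphism $M_0\cong\HH_0(B(\scrA))$ of $\Sym\Azo$-modules.
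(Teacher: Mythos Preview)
Your proof is correct and follows essentially the same approach as the paper's. Both arguments identify $M_0$ and $\HH_0(B(\scrA))$ as explicit quotients of $\scrA_0$ and use Lemma \ref{putrootinfront} for the one nontrivial inclusion; the paper factors through an intermediate object $\tilde M_0 \cong B(\scrA)$ before passing to Hochschild homology, whereas you combine the two steps into a single comparison of the submodules $\bar J$ and $I_B + [\scrA_0,\scrA_0]$, but the content is identical.
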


\begin{proof}
	Let $$\tilde J := \sum_{0\neq \la\in\N\Sigma_+}S\cdot\left\{ 1\otimes a b - q^\lambda \otimes b a \mid a \in \scrA_\lambda, b \in \scrA_{-\lambda}\right\}
	\subset S\otimes \scrA_0.$$
	Thus $\tilde J$ is defined in the same way as $J$, except that we do not allow $\la=0$.  Let $\tilde M := R/\tilde J$ and $\tilde M_0 := \C_0\otimes_S \tilde M$.
	
	We claim that $\tilde M_0$ is isomorphic to $B(\scrA)$.  Indeed, $\tilde M_0$ is the quotient of $\scrA_0$
	by the ideal $\tilde J_0 := \C_0 \otimes_S \tilde J$, and it is clear that $\tilde J_0$
	is contained in the kernel of the surjection from $\scrA_0$ to $B(\scrA)$.
	We need to show that, if $\langle\mu,\xi\rangle > 0$, $a \in \scrA_\mu$, and $b \in \scrA_{-\mu}$, then $ab\in \tilde J_0$.
	To see this, write $$a = \sum y_i z_i $$ as in Lemma \ref{putrootinfront}.
	Then $ab = \sum (y_i z_i) b = \sum y_i (z_i b) \in \tilde J_0$.
	
	It remains to show that $M_0$ is isomorphic to the degree zero Hochschild homology of $\tilde M_0$.  Now $ M_0 = \scrA_0 / J_0 $, where $ J_0 := \C_0 \otimes_S J$.  We observe that the difference between $J_0$ and $\tilde J_0$ is that $J_0$ contains
	the linear span of $ab - ba$ for all $a,b\in\scrA_0$, which descends to the linear span of arbitrary commutators in the ring $\tilde M_0$.
\end{proof}

\begin{remark}\label{Nak-abelian}
	The equivariant Hikita conjecture \cite[Conjecture 8.9]{KTWWY} states that,
	in the presence of symplectic duality,
	$B(\scrA)$ is isomorphic to the equivariant cohomology ring of the dual variety, which is concentrated in even degree.
	If this conjecture holds, then $B(\scrA)$ is commutative, and therefore equal to its own degree zero Hochschild homology.
	Thus, if we assume that $X$ has a symplectic dual for which the equivariant Hikita conjecture holds, 
	then Proposition \ref{M0} simply says that $M_0$ is isomorphic to $B(\scrA)$.
\end{remark} 

Now we go in the opposite direction and consider the $ \C[\hbar] $-algebras 
$$M_T := \cO(T) \otimes_S M \and M_1 := \C_1\otimes_S M $$
obtained by specializing $M$ over the full torus $ T $ and at the identity element of $T$.

\begin{remark}
We note that $M_0$ can be recovered from $\Mreg$, since $1-q^{\la}$ does not evaluate to zero at the fixed point of $\Spec S$.  On the other hand, $M_1$
cannot be recovered from $\Mreg$, since $1-q^{\la}$ {\em does} evaluate to zero at the identity element of $T$.
\end{remark}

Consider the algebra $ \scrA^{\hbar = 1} := \scrA \otimes_{\C[\hbar]} \C[\hbar]/\langle\hbar-1\rangle $.  Define a bimodule over this algebra with underlying vector space $ \cO(T) \otimes \scrA^{\hbar = 1} $, where the left action is given by $ a \cdot f \otimes b = f q^{-\lambda} \otimes ab $ for $ a \in \scrA^{\hbar = 1}_\lambda $, $ f \in \cO(T) $, and $ b \in \scrA^{\hbar = 1} $, and where the right action is just given by right multiplication.

\begin{remark}
	The definition of this bimodule is motivated by a recent paper by Etingof-Stryker \cite{ES}.  They study twisted traces, which are closely related to the graded traces we study in this paper.
\end{remark}

Consider the $\scrA^{\hbar = 1}$-modules
$$M_T^{\hbar = 1} := \scrA^{\hbar = 1}\otimes_\scrA M_T \and M_1^{\hbar = 1} := \scrA^{\hbar = 1}\otimes_\scrA M_1.$$

\begin{proposition}\label{M1}
We have vector space isomorphisms
$$M_T^{\hbar = 1} \cong \HH_0(\scrA^{\hbar = 1}, \cO(T) \otimes \scrA^{\hbar = 1}) \and M_1^{\hbar = 1} \cong \HH_0(\scrA^{\hbar = 1}).$$
\end{proposition}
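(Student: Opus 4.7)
The plan is to establish the first isomorphism directly and deduce the second by base change. Specializing the bimodule $\cO(T)\otimes\scrA^{\hbar=1}$ at the identity of $T$ collapses the twist $q^{-\lambda}$ and yields $\scrA^{\hbar=1}$ with its regular bimodule structure; since $\HH_0$ commutes with such central base change, and $M_1^{\hbar=1} = M_T^{\hbar=1}\otimes_{\cO(T)}\C_1$ by construction, the second statement follows once the first is established. I therefore focus on the first.

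To construct $\phi\colon M_T^{\hbar=1}\to\HH_0(\scrA^{\hbar=1}, \cO(T)\otimes\scrA^{\hbar=1})$, send $[g\otimes a]$ to $[g\otimes a]$. Well-definedness reduces to the direct calculation, for $a\in\scrA_\lambda^{\hbar=1}$ and $b\in\scrA_{-\lambda}^{\hbar=1}$ with $\lambda\in\N\Sigma_+$, that the defining relation $g\otimes ab - gq^\lambda\otimes ba$ of $J_T^{\hbar=1}$ equals the bimodule commutator $a\cdot(gq^\lambda\otimes b) - (gq^\lambda\otimes b)\cdot a$; this is immediate from the twisted left action $a\cdot(f\otimes b)=fq^{-\lambda}\otimes ab$.

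For surjectivity I show the Hochschild quotient is concentrated in $\scrA$-weight zero. Given $b\in\scrA_\mu^{\hbar=1}$ with $\mu\ne 0$, pick $x\in\scrA_0^2$ with $\langle\mu,\bar x\rangle\ne 0$. Since $x$ has $T$-weight zero, the bimodule commutator simplifies to $x\cdot(f\otimes b) - (f\otimes b)\cdot x = f\otimes[x,b] = \langle\mu,\bar x\rangle(f\otimes b)$, using $[x,b]=\hbar\langle\mu,\bar x\rangle b$ and the specialization $\hbar=1$. Hence $[f\otimes b]=0$ in $\HH_0$, and every class is represented by an element of $\cO(T)\otimes\scrA_0^{\hbar=1}$.

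Injectivity is the main work. Suppose $g\otimes c\in\cO(T)\otimes\scrA_0^{\hbar=1}$ lies in the span of bimodule commutators. Grouping by $\scrA$-weight (which is preserved by commutators), we reduce to showing that each term $f_iq^{-\lambda_i}\otimes a_ib_i - f_i\otimes b_ia_i$, with $a_i\in\scrA_{\lambda_i}$ and $b_i\in\scrA_{-\lambda_i}$, belongs to $J_T^{\hbar=1}$. The cases $\lambda_i=0$ and $\pm\lambda_i\in\N\Sigma_+$ are immediate from the definition of $J$ (the negative case by antisymmetry). The remaining $\lambda_i$ I handle by induction on $\deg(a_i)$ in the $\N$-grading: a mild extension of Lemma \ref{putrootinfront} writes $a_i = \sum_j y_jz_j$ with $y_j\in\scrA_{\nu_j}$ and $\nu_j\in\Sigma$, and then the identity $[y_jz_j, b_i] = [y_j, z_jb_i] + [z_j, b_iy_j]$ expresses each commutator as one already-handled term (since $\pm\nu_j\in\N\Sigma_+$) plus a commutator whose left factor $z_j$ has strictly smaller $\N$-degree, to which the inductive hypothesis applies. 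The main obstacle is justifying the extension of Lemma \ref{putrootinfront} from $\xi$-positive weights to arbitrary nonzero weights, but the original proof adapts verbatim once one observes that any nonzero-weight class in $\scrA^+/(\scrA^+\cdot\scrA^+)$ has weight in $\Sigma$ by definition, so the base case of the induction still produces a factor of weight in $\Sigma$.
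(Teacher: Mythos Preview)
Your proof is correct and follows essentially the same strategy as the paper: both reduce $\HH_0$ to a quotient of $\cO(T)\otimes\scrA_0^{\hbar=1}$ via the commutator-with-$\Azo$ argument, then show by induction on degree that every weight-zero bimodule commutator lies in $J_T^{\hbar=1}$, using the factorization $a=\sum y_jz_j$ and the identity $[yz,m]=[y,z\cdot m]+[z,m\cdot y]$ (which the paper writes out as the telescoping sum $1\otimes y_iz_ib - q^\mu\otimes by_iz_i = (1\otimes y_i(z_ib) - q^{\lambda_i}\otimes (z_ib)y_i) + q^{\lambda_i}(1\otimes z_i(by_i) - q^{\mu-\lambda_i}\otimes (by_i)z_i)$).

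The one genuine difference is the case analysis for weights $\mu$ not lying in $\pm\N\Sigma_+$. Rather than extend Lemma~\ref{putrootinfront} to arbitrary nonzero weights, the paper observes that neither $J_T^{\hbar=1}$ nor the target $\HH_0$ depends on the choice of the genericity parameter $\xi$: replacing $\xi$ by $-\xi$ reduces $\langle\mu,\xi\rangle<0$ to the positive case, and for $\langle\mu,\xi\rangle=0$ with $\mu\neq 0$ one perturbs $\xi$ (without changing $\Sigma_+$) so that $\langle\mu,\xi\rangle\neq 0$. This sidesteps the need to re-examine the lemma's proof. Your extension is valid, but two small points deserve care: the $\N$-grading on $\scrA$ descends only to a \emph{filtration} on $\scrA^{\hbar=1}$, so your induction should be phrased in terms of filtration degree; and you should track (as the paper does implicitly) that the $z_j$ produced by the lemma have strictly smaller filtration degree than $a$, which follows from the proof of Lemma~\ref{putrootinfront} but is not part of its statement.
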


\begin{proof}
We first observe that the second isomorphism follows from the first, so we only need to prove the first.
Using the invertibility of $q^\lambda $ in $\cO(T) $, we see that
	$$
	\HH_0(\scrA^{\hbar = 1}, \cO(T) \otimes \scrA^{\hbar = 1}) = \cO(T) \otimes  \scrA^{\hbar = 1} \Big{/} \sum_{\lambda \in \ft_\Z^*} \cO(T) \Big\{ 1 \otimes  ab - q^\lambda \otimes ba \;\Big{|}\; a \in \scrA^{\hbar = 1}_\lambda, b \in \scrA^{\hbar = 1} \Big\}.
	$$
	
Given $\la\neq 0$, choose $ x \in \Azo$ such that $ \langle \lambda, \bar x \rangle \ne 0 $.
If $ b \in \scrA^{\hbar = 1}_\lambda $, then the commutator of $b$ with the image of $x$ in $\scrA^{\hbar = 1}_0$ will be a nonzero
multiple of $b$, which implies that the image of $ 1 \otimes b $ in $   \HH_0(\scrA^{\hbar = 1}, \cO(T) \otimes \scrA^{\hbar = 1})$
will be trivial.  It follows that
$\HH_0(\scrA^{\hbar = 1}, \cO(T) \otimes \scrA^{\hbar = 1})$ is a quotient of $ \cO(T) \otimes \scrA^{\hbar = 1}_0 $.

Let 
$$
J^{\hbar = 1}_T :=  \sum_{\la\in\N\Sigma_+} \cO(T) \left\{ 1\otimes a b - q^\lambda \otimes b a \mid a \in \scrA^{\hbar = 1}_\lambda, b \in \scrA^{\hbar = 1}_{-\lambda}\right\}
\subset \cO(T) \otimes \scrA^{\hbar = 1}_0,
$$
so that $M^{\hbar = 1}_T = \cO(T) \otimes \scrA^{\hbar = 1}_0 / J^{\hbar = 1}_T.$  It is clear that $ J^{\hbar = 1}_T $ is contained in the kernel of the map $$\cO(T) \otimes \scrA^{\hbar = 1}_0 \rightarrow \HH_0(\scrA^{\hbar = 1}, \cO(T) \otimes \scrA^{\hbar = 1}),$$ so it suffices to prove the reverse containment.

The $\N$-grading on $ \scrA $ descends to a filtration of $ \scrA^{\hbar = 1} $, where $(\scrA^{\hbar = 1})^k$
is the image of $\scrA^{\leq k}$ in $ \scrA^{\hbar = 1} $.
We will show that, if $a \in (\scrA^{\hbar = 1}_\mu)^k$ and $b \in (\scrA^{\hbar = 1}_{-\mu})^l$, then $1 \otimes ab - q^\mu \otimes ba \in J^{\hbar = 1}_T$.
We will assume without loss of generality that $k\leq l$ and proceed by induction on $k$.

Assume first that $\langle\mu,\xi\rangle > 0$.
As in Lemma \ref{putrootinfront}, write $a = \sum y_i z_i$ for some $y_i \in \scrA^{\hbar = 1}_{\lambda_i}$ and $z_i \in (\scrA^{\hbar = 1}_{\mu - \lambda_i})^{q_i}$ with $\lambda_i \in \Sigma_+$ and $q_i < k$ for all $i$. 
Then for each $ i$, we have
$$
1 \otimes y_i z_i b - q^\mu \otimes b y_i z_i = 1\otimes y_i z_i b - q^{\lambda_i} \otimes z_i b y_i + q^{\lambda_i} (1\otimes z_i b y_i - q^{\mu - \lambda_i} \otimes b y_i z_i).
$$
By the definition of $ J^{\hbar = 1}_T $, we see that $1\otimes y_i z_i b - q^{\lambda_i} \otimes z_i b y_i \in J^{\hbar = 1}_T $.
Our inductive hypothesis tells us that $ 1 \otimes z_i b y_i - q^{\mu - \lambda_i} \otimes b y_i z_i \in J^{\hbar = 1}_T $, as well.
Thus $1 \otimes y_i z_i b - q^\mu \otimes b y_i z_i \in J^{\hbar = 1}_T $ for all $i$, and therefore $1 \otimes ab - q^\mu \otimes ba \in J^{\hbar = 1}_T $ as desired.

If $\langle\mu,\xi \rangle< 0$, then we can replace $\xi$ with $-\xi$, which does not affect the definitions of either $J_T^{\hbar = 1}$
or $\HH_0(A, \cO(T)\otimes A)$, and thus reduce to the previous case.
Finally, suppose that $\langle\mu,\xi\rangle = 0$.
If $\mu\neq 0$, then we can perturb $\xi$ so that this
is no longer the case without changing $\Sigma_+$.  This again does not affect the definitions of either $J_T^{\hbar = 1}$ or $\HH_0(A, \cO(T)\otimes A)$, and we may again reduce to the previous cases.
If $\mu=0$, then the fact that $1 \otimes ab - q^\mu \otimes ba = [a,b] \in J_T^{\hbar = 1}$ is immediate from the definition of $J$.
\end{proof}

\subsection{Finite generation}
Recall that $ \Mreg $ is a module over $ \Rreg$,
and $\Rreg$ may be regarded as a subring of differential operators on $\Spec\Sreg$ with values in $H_2(\tXreg; \C)$ (Remark \ref{diff-ops}).
In particular $ \Rreg $ is generated by three types of elements: the ``vector fields'' $\ft$, the ``functions" $\Sreg$, and the ``values" 
$H_2(\tXreg; \C)$.  The following result says that $ \Mreg $ is finitely generated over just the functions and the values.

\begin{proposition} \label{finitely generated}
If $ M_0 $ is finitely generated as a module over $\Sym H_2(\tXreg; \C)\otimes \C[\hbar]$, then $\Mreg$ is finitely generated as a module over $\Sreg \otimes \Sym H_2(\tXreg; \C) \otimes \C[\hbar]$.
\end{proposition}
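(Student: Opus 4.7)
The plan is to argue by a graded Nakayama with respect to the conical $\N$-grading on $\Mreg$, working over the commutative subring $C := \Sreg \otimes \Sym H_2(\tXreg;\C) \otimes \C[\hbar]$ of $\Rreg$. Writing $D := \Sym H_2(\tXreg;\C) \otimes \C[\hbar]$, one has $C = \Sreg \otimes D$ as an $\N$-graded algebra with $C^0 = \Sreg$ concentrated in conical degree $0$ and $D_+$ in degrees $\geq 2$.

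First I would pick homogeneous generators $\bar m_1, \ldots, \bar m_k$ of $M_0$ as a $D$-module (possible since $M_0$ is $\N$-graded and $D$ is a graded polynomial ring), and lift them to homogeneous elements $\tilde m_1, \ldots, \tilde m_k \in M$. Because $\scrA_0^0 = \C$ and every relation defining $J$ has strictly positive conical degree, $M^0 = S$; so I may take $\tilde m_1 = 1$, ensuring that $N := C \cdot \{\tilde m_i\} \subseteq \Mreg$ satisfies $N^0 = \Sreg = \Mreg^0$, and hence $(\Mreg/N)^0 = 0$.

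The main technical step will be to establish $\Mreg = N + D_+ \Mreg$. Centrality of $D_+$ in $\Rreg$ gives $\Mreg / D_+\Mreg \cong \Sreg \otimes_S (M/D_+M)$. Since $\scrA_0/D_+\scrA_0 \cong \cO(X)_0$ and $\cO(X)$ is commutative, each defining relation $1 \otimes ab - q^\lambda \otimes ba$ of $J$ reduces modulo $D_+$ to $(1-q^\lambda) \otimes \bar a \bar b$ in $S \otimes \cO(X)_0$; since $1-q^\lambda$ is a unit in $\Sreg$, these relations collapse in $\Mreg/D_+\Mreg$ to $1 \otimes \bar a \bar b = 0$. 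Running the analogous calculation for $M_0$ yields
\[
\Mreg / D_+\Mreg \;\cong\; \Sreg \otimes_\C (M_0 / D_+ M_0) \;\cong\; \Sreg \otimes_\C (\cO(X)_0 / I),
\]
where $I := \sum_{0 \neq \lambda \in \N\Sigma_+} \cO(X)_\lambda \cdot \cO(X)_{-\lambda}$. The hypothesis that $M_0$ is f.g.\ over $D$ forces $M_0/D_+ M_0$ to be finite-dimensional over $\C$, and the images of the $\tilde m_i$ span it by construction, so the $\tilde m_i$ will generate $\Mreg/D_+\Mreg$ as an $\Sreg$-module, giving $\Mreg = N + D_+ \Mreg$.

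A graded Nakayama argument then closes the proof: $\Mreg/N$ is a non-negatively $\N$-graded $C$-module with $(\Mreg/N)^0 = 0$, and $\Mreg/N = D_+(\Mreg/N)$ by the previous step; since $D_+$ raises conical degree by at least $2$, induction on $d$ forces $(\Mreg/N)^d = 0$ for every $d$. I expect the main obstacle to be the identification of $\Mreg/D_+\Mreg$ in the third paragraph, which requires simultaneously invoking the commutativity of $\cO(X)$ (to collapse the noncommutative relations in $J$ to multiples of $1-q^\lambda$) and the Ore localization of \S\ref{sec:ring} (to clear those factors). It is crucial that the Nakayama step is carried out against $D_+$ rather than against the augmentation ideal $\mathfrak m_0 \subset \Sreg$ of the fixed point, since $\mathfrak m_0$ sits in conical degree $0$ and is not contained in the Jacobson radical of $C$, so a direct Nakayama at $\mathfrak m_0$ would fail.
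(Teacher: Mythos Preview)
Your overall architecture (graded Nakayama against $D_+$) is sound and, once completed, is essentially a repackaging of the paper's direct induction on conical degree: both rely on the fact that modulo $\hbar$ the relation $1\otimes ab - q^\lambda\otimes ba$ collapses to $(1-q^\lambda)\otimes \bar a\bar b$, and then on inverting $1-q^\lambda$. The paper does this one element at a time, writing $a_jb_j = \tfrac{q^{\lambda_j}}{1-q^{\lambda_j}}\hbar c$ in $\Mreg$; you do it globally by computing $\Mreg/D_+\Mreg$. Neither approach has a real advantage over the other.

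There is, however, a genuine gap in your third paragraph. You assert that ``$1-q^\lambda$ is a unit in $\Sreg$'' for the generators of $J$, but those generators are indexed by $\lambda\in\N\Sigma_+$, and $\Sreg$ only inverts $1-q^\lambda$ for $\lambda\in\Sigma_+$. For instance, if $\Sigma_+=\{\alpha\}$ then $1-q^{2\alpha}=(1-q^\alpha)(1+q^\alpha)$ is \emph{not} a unit in $\Sreg$. Thus you have not yet shown $\Sreg\otimes I\subseteq \bar J_\reg$, which is exactly the inclusion you need to conclude that the $\tilde m_i$ generate $\Mreg/D_+\Mreg$. The fix is to invoke Lemma~\ref{putrootinfront}: any $\bar a\in\cO(X)_\lambda$ with $\langle\lambda,\xi\rangle>0$ can be written as $\sum \bar y_i\bar z_i$ with $\bar y_i\in\cO(X)_{\mu_i}$ and $\mu_i\in\Sigma_+$, so $\bar a\bar b=\sum \bar y_i(\bar z_i\bar b)$ and each summand satisfies $(1-q^{\mu_i})\otimes\bar y_i(\bar z_i\bar b)\in\bar J$ with $1-q^{\mu_i}$ genuinely invertible. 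This is precisely the lemma the paper's proof uses at the same juncture, and once you add it your argument goes through.
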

\begin{proof}
Choose elements $x_1,\ldots,x_d\in\scrA_0$ whose images generate $M_0$ as a module over $ \Sym H_2(\tXreg; \C)\otimes\C[\hbar] $,
and let $\Mreg'\subset\Mreg$ be the submodule spanned by the images of $x_1,\ldots,x_d$ in $\Mreg$.
We will show that $\Mreg'=\Mreg$.  To do this, we
will prove by induction that, for any natural number $m$, the degree $m$ parts of $\Mreg$ and $\Mreg'$ coincide. 
The base case $m=0$ holds because $\Mreg^0=\Sreg$.

Let $ a \in \scrA_0^m$.  Since $x_1,\ldots,x_d$ generate $M_0$, we may choose elements $r_1,\ldots,r_d\in \Sym H_2(\tXreg; \C)\otimes \C[\hbar]$,
$\la_1,\ldots,\la_e\in\N\Sigma_+$, and $a_1,b_1,\ldots,a_e,b_e\in\scrA$ with $a_j\in\scrA_{\la_j}$ and $b_j\in\scrA_{-\la_j}$,
such that
$$
a = \sum_{i=1}^d r_ix_i+\sum_{j=1}^e a_jb_j \in\scrA_0^m.
$$
By Lemma \ref{putrootinfront}, we may also assume that $\la_j\in\Sigma_+$ for all $j$.
It now suffices to show that, for each $j$, the image of $a_jb_j$ in $\Mreg$ lies in $\Mreg'$.

In $ \Mreg $, we have $$a_jb_j = q^{\la_j}b_ja_j.$$
On the other hand, since the quotient of $\scrA$ by $\scrA\hbar$ is commutative, there exists an element $c\in\scrA^{m-2}_0$
such that $[b_j,a_j] = \hbar c$.  Combining this with the previous equation, we see that in $ \Mreg $,
$$
a_j b_j = \frac{q^{\la_j}}{1 - q^{\la_j}} \hbar c.
$$
 By our inductive hypothesis, the image of $c$ lies in $\Mreg'$,
thus so does the image of $a_jb_j$.
\end{proof}

By Proposition \ref{M0} and \cite[Proposition 5.1]{BLPWgco}, $M_0$ is finitely generated as a module over $\Sym H_2(\tXreg; \C)\otimes \C[\hbar]$
whenever $\tX$ is smooth and the action of $T$ on $\tX$ has isolated fixed points.
We thus have the following corollary.
\begin{corollary} \label{isolfixedfinitegeneration}
If $\tX$ is smooth and $T$ acts on $\tX$ with isolated fixed points, then $\Mreg$ is finitely generated as a module over $\Sreg \otimes \Sym H_2(\tX; \C) \otimes \C[\hbar]$.
\end{corollary}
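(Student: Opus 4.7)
The plan is to derive this corollary by combining Proposition \ref{finitely generated} with Proposition \ref{M0} and the cited result from \cite[Proposition 5.1]{BLPWgco}. So the whole argument is essentially a chain of reductions, with no new technical ideas required.

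First, I would observe that since $\tX$ is smooth by hypothesis, we have $\tXreg = \tX$, so the two statements about $\Sym H_2(-;\C)$ are the same, and I can freely use either. By Proposition \ref{M0}, we have an isomorphism $M_0 \cong \HH_0(B(\scrA))$ of $\Sym \Azo$-modules, and in particular of modules over the subring $\Sym H_2(\tXreg;\C) \otimes \C[\hbar] \subset \Sym \Azo$ (using the inclusion coming from the quantization exact sequence \eqref{eq:quant-exact}).

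Second, I would invoke \cite[Proposition 5.1]{BLPWgco}: under the hypothesis that $\tX$ is smooth and $T$ acts with isolated fixed points, $B(\scrA)$ is finitely generated as a module over $\Sym H_2(\tXreg;\C) \otimes \C[\hbar]$. Since $\HH_0(B(\scrA)) = B(\scrA)/[B(\scrA), B(\scrA)]$ is a quotient of $B(\scrA)$ as a module over this ring, it too is finitely generated. Combining with the previous step, $M_0$ is finitely generated over $\Sym H_2(\tXreg;\C) \otimes \C[\hbar]$.

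Finally, Proposition \ref{finitely generated} applies verbatim and produces the conclusion that $\Mreg$ is finitely generated as a module over $\Sreg \otimes \Sym H_2(\tXreg;\C) \otimes \C[\hbar] = \Sreg \otimes \Sym H_2(\tX;\C) \otimes \C[\hbar]$. There is no real obstacle here; the only thing worth being careful about is checking that the natural $\Sym H_2(\tXreg;\C) \otimes \C[\hbar]$-module structure used in \cite[Proposition 5.1]{BLPWgco} matches the one implicit in the hypothesis of Proposition \ref{finitely generated}, which is guaranteed by the construction of $B(\scrA)$ as a quotient of $\scrA_0$ and the inclusion $H_2(\tXreg;\C) \oplus \C\hbar \hookrightarrow \Azo$ from \eqref{eq:quant-exact}.
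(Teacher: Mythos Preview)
Your proposal is correct and follows essentially the same approach as the paper: combine Proposition \ref{M0} with \cite[Proposition 5.1]{BLPWgco} to verify the hypothesis of Proposition \ref{finitely generated}, then apply that proposition. Your write-up is in fact slightly more careful than the paper's (you make explicit that $\HH_0(B(\scrA))$ is a quotient of $B(\scrA)$ and that $\tXreg = \tX$ under the smoothness hypothesis), but the argument is the same.
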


\subsection{Traces} \label{sec:traces}
Given an $\C$-algebra $ A$, its degree zero Hochschild homology is used to study traces of finite-dimensional representations.  Indeed, if $ V $ is an $A$-module which is finite-dimensional as a $\C$-vector space, then the trace map $ A \rightarrow \C $ given by $ a \mapsto \tr(a |_V) $ factors through $\HH_0(A)$.
Our algebra $ \scrA $ carries a grading and the $R$-module $ M $ can be thought of as a graded version of the degree zero Hoschshild homology of $\scrA$ (if we set $ \hbar = 1 $ and restrict to $T$, this is Proposition \ref{M1}).  Now, we will define a notion of graded traces for $ \scrA$-modules and prove that these graded traces factor through $ M $ (Proposition \ref{lemma characters}).

Let $ \Pi $ denote the set of linear maps $ \mu : \Azo \rightarrow \C \hbar $ such that $ \mu(\hbar) = \hbar $.  There is a free action of $ \ft^*_\Z $ on $ \Pi $ given by $ (\lambda + \mu)(x) = \mu(x) + \hbar \lambda(\bar x) $, where $ \lambda \in \ft^*_\Z$ and $ \mu \in \Pi$ .
Let $$N := \left\{\sum_{\mu \in \Pi} p_\mu q^\mu \;\Big{|}\; p_\mu \in \C[\hbar]\right\}.$$
Note that $N$ is similar to $S\otimes\C[\hbar]$, but it is much bigger;
we allow the exponent of $q$ to live in $\Pi$ rather than in $\N\Sigma_+$,
and we also allow infinite rather than finite sums.
We endow $N$ with the structure of a left $R$-module by putting
$$q^\lambda \cdot q^\mu = q^{\lambda + \mu}\and x \cdot q^\mu
= \mu(x) q^\mu$$
for all $\la\in\N\Sigma_+$, $\mu\in \Pi$, and $x\in \Azo$.

Let $ V $ be a graded $ \scrA $-module.  For any $ \mu \in \Pi $, let
$$ V_\mu := \{ v \in V \mid x\cdot v = \mu(x) v\; \text{ for all } x \in \Azo \}.
$$
Then for all $ a \in \scrA_\lambda $ and $ v\in V_\mu $, we have $ a \cdot v \in V_{\lambda+\mu}$.
If each $ V_\mu $ is a free $\C[\hbar]$-module of finite rank, we call $V$ {\bf reasonable}.
We define the {\bf graded trace} of a reasonable $\scrA$-module $ V $ to be the map
\begin{equation*}
\begin{aligned}
\tr_V : \scrA_0 &\rightarrow  N\\
a &\mapsto \sum_{\mu \in \Pi} \tr(a|_{V_\mu}) q^\mu.
\end{aligned}
\end{equation*}
In particular $ \chi_V := \tr_V(1) $ is the generating function for the ranks of the weight spaces as $\C[\hbar]$-modules, 
which we will refer to as the {\bf character} of the representation.

\begin{proposition} \label{lemma characters}
The graded trace descends to an $\N$-graded $R$-module map $\tr_V:M \rightarrow N$.
\end{proposition}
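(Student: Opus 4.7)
The plan is to extend the trace map to a $\C$-linear map $\widetilde{\tr}_V : S \otimes \scrA_0 \to N$ by the rule $q^\la \otimes a \mapsto q^\la \cdot \tr_V(a)$, and then to verify three properties: (a) $R$-linearity, (b) annihilation of the submodule $J$, and (c) preservation of the $\N$-grading. Once these are established, $\widetilde{\tr}_V$ descends to the desired $\N$-graded $R$-module map $M \to N$.

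For (a), compatibility with the action of $q^\mu \in S$ is immediate from the definition. For $x \in \Azo$, the key points are that every $a \in \scrA_0$ commutes with $\Azo$ (so $a$ preserves each weight space $V_\mu$), and that $x$ acts on $V_\mu$ as the scalar $\mu(x) \in \C\hbar$. A short bookkeeping computation then verifies that the twists in the two $R$-module structures, namely $x \cdot (q^\la \otimes a) = q^\la \otimes (x + \hbar \langle \la, \bar x \rangle) a$ on $S \otimes \scrA_0$ and $x \cdot q^{\la + \nu} = (\la + \nu)(x) q^{\la + \nu}$ on $N$, correspond perfectly under $\widetilde{\tr}_V$.

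The substantive step is (b), an application of the cyclicity of trace. Given $a \in \scrA_\la$ and $b \in \scrA_{-\la}$, the weight decomposition forces $a : V_\mu \to V_{\mu + \la}$ and $b : V_\mu \to V_{\mu - \la}$, so both $ab$ and $ba$ preserve each $V_\mu$. Cyclicity yields $\tr(ba|_{V_\mu}) = \tr(ab|_{V_{\mu + \la}})$, and expanding $\widetilde{\tr}_V(q^\la \otimes ba) = \sum_\mu \tr(ba|_{V_\mu}) q^{\mu + \la}$ and reindexing via $\nu = \mu + \la$ produces precisely $\sum_\nu \tr(ab|_{V_\nu}) q^\nu = \widetilde{\tr}_V(1 \otimes ab)$. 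Thus every generator of $J$ maps to zero. For (c), any $a \in \scrA_0^m$ acts on the free graded $\C[\hbar]$-module $V_\mu$ as a homogeneous degree-$m$ endomorphism, so its trace lies in $\C \hbar^{m/2}$; since $\hbar$ has degree $2$ and $q^\mu$ has degree $0$ in $N$, the map $\tr_V$ is homogeneous of the correct degree.

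The only conceptually delicate step is the cyclicity argument in (b); the main bookkeeping hazard is keeping the direction of the weight shift consistent, so that the $q^\la$ factor in the definition of $J$ matches exactly the shift produced by the trace identity.
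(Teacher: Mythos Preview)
Your proof is correct and follows essentially the same approach as the paper: the heart of both arguments is the cyclicity identity $\tr(ba|_{V_\mu}) = \tr(ab|_{V_{\mu+\la}})$ for $a\in\scrA_\la$, $b\in\scrA_{-\la}$, followed by the reindexing that matches the $q^\la$ shift in the definition of $J$. The paper's proof is terser and addresses only your step (b), leaving the $R$-linearity and the $\N$-grading compatibility implicit; your explicit treatment of (a) and (c) is a welcome addition rather than a departure.
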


\begin{proof}
We need to show that $J$ is contained in the kernel of $\tr_V$.  More concretely, if
$ a \in \scrA_\lambda$ and $b \in \scrA_{-\lambda} $, we need to show that $ \tr_V(ab) = q^\lambda \tr_V(ba) $.
Pick an element $ \mu \in \Pi $ and consider the maps $a : V_\mu \rightarrow V_{\la + \mu} $ and $ b : V_{\lambda+\mu} \rightarrow V_\mu $.  Since these are linear maps between finite-rank free $\C[\hbar]$-modules, we have
$$
\tr(ba|_{V_{\mu}}) = \tr(ab|_{V_{\lambda+\mu}}),
$$
and therefore
$$q^\la\tr_V(ba) = q^\la\sum_\mu\tr(ba|_{V_\mu})q^\mu = q^\la\sum_\mu\tr(ab|_{V_{\lambda+\mu}})q^\mu
= \sum_\mu\tr(ab|_{V_{\lambda+\mu}})q^{\la+\mu} = \tr_V(ab).$$
This completes the proof.\end{proof}

\begin{remark}\label{solutions}
Fix a splitting of the quantization exact sequence \eqref{eq:quant-exact}. Recall from Remark \ref{diff-ops} that, given $c \in H^2(\tXreg; \C)$, the ring $R^c_{T}$ obtained by 
localizing to $T$ and killing
the ideal generated by $\theta - \hbar \langle \theta, c \rangle$ for all $\theta \in H_2(\tXreg; \C)$ is isomorphic
to the Rees algebra of the ring of differential operators on $T$. 
Let $M^{c} := R^{c} \otimes_R M$, 
$$\Pi^{c} := \left\{\mu\in\Pi\;\Big{|}\; \text{$\mu(\theta) = \hbar \langle \theta,c\rangle$ for all $\theta \in H_2(\tXreg; \C)$}\right\},$$
The splitting of \eqref{eq:quant-exact} identifies $\Pi^{c}$ with $\ft^*$. Define
$$N^c := \left\{\sum_{\mu \in \Pi^c} p_\mu q^\mu \;\Big{|}\; p_\mu \in \C[\hbar]\right\}.$$
We endow $N^c$ with an $R^{c}$-module structure as above. Then for any reasonable $V$, the graded trace map descends to a $R^{c}$-module map
$$\tr^{c}_V: M^{c} \to N^c.$$
We may regard this map as a ``solution" to the D-module $M^{c}$ on $T$. Note that it depends on the choice of a splitting of \eqref{eq:quant-exact}, i.e. of a quantum co-moment map.
\end{remark}

\begin{remark} \label{posreasonable}
Let $\Nreg \subset N$ be the set of all $\sum p_\mu q^\mu$ such that for all $\la\in\Sigma_+$ and all $\mu\in\Pi$, we have $ p_{\mu - n\la} = 0 $ for all $n>\!\!> 0$.
This is an $R$-submodule on which $1-q^\la$ acts invertibly
for all $\la\in\Sigma_+$, and therefore the action of $R$ on $\Nreg$ extends to an action of $\Rreg$.
We say that a graded $ \scrA$-module $V $ is {\bf positively reasonable} if it is reasonable and $\tr_V(1)\in\Nreg$.
The the graded trace map extends to an $ \Rreg $-module map $\tr_V: \Mreg \rightarrow  \Nreg$.
\end{remark}

\begin{example} \label{sl2example2}
We continue with Example \ref{sl2example1}.  
We have $\Azo = \C\{a_1,a_2,\hbar\}$.  
For any $\mu\in\Pi$, let $\mu_i := \mu(a_i)/\hbar$, and write $q^\mu = q_1^{\mu_1}q_2^{\mu_2}$.
Let $ V $ be a positively reasonable representation of $ \scrA $
on which the central element $ a_2 - a_1 \in\scrA$ acts by a scalar $ z\hbar $
for some complex number $z$; this implies that
$ V_{\mu} = 0 $ unless $ \mu_2 - \mu_1 = z $.
By Proposition \ref{lemma characters}, we have $ a_1 a_2 (1-q) \chi_V = 0 \in N$,
which implies that there exist $ p_1, p_2 \in \Z$ such that $$ (1-q) \chi_V = p_1 q_1^{-z} + p_2 q_2^{z}.$$
Then
$$ \chi_V = \frac{p_1 q_1^{-z} + p_2 q_2^{z}}{1-q} = p_1 \sum_{n=0}^\infty q_1^{n-z}q_2^n + p_2 \sum_{n=0}^\infty q_1^n q_2^{n+z},
$$
which is reminiscent of the Weyl character formula.
If $V_1$ and $V_2 $ are the Verma modules for $ \scrA$ with central character $z\hbar$, we have
$$
\chi_{V_1} =  \sum_{n=0}^\infty q_1^{n-z}q_2^n \and \chi_{V_2} =  \sum_{n=0}^\infty q_1^n q_2^{n+z}.
$$
If $ z$ is a positive integer, there is a finite-dimension module $ V $  with $p_1 = 1$ and $p_2 = -1$, so that
$$\chi_V = \sum_{n=0}^{z-1} q_1^{n-z} q_2^{n}.$$ Similarly, if $z$ is a negative integer, there is a finite dimensional module $V$ with $p_1 = -1$ and $p_2=1$,
so that $$\chi_V = \sum_{n=0}^{1-z} q_1^{n} q_2^{n+z}.$$
\end{example}

\subsection{The rank of \boldmath{$M_\reg$}}\label{sec:rank}
Assume for the remainder of this section that $X$ admits a $(T \times \mathbb{G}_m)$-equivariant projective symplectic resolution $\tX$ with isolated $T$-fixed points.
For each fixed point $x \in \tX^T$, we may define a local version $\scrA_x$ of $\scrA$
by quantizing the tangent space of $x$.  The inclusion of a formal neighborhood of $x$ into $\tX$ induces 
a $T$-equivariant surjection $\scrA^{ }\to\scrA_x^{ }$ \cite[Section 5.1]{BLPWgco}.
Given $c \in H^2(\tX; \C)$, let $\scrA_x^{c}$ be the quotient of $\scrA_x$ by the ideal generated by the central elements
$\theta - \hbar \langle \theta, c \rangle$ for all $\theta \in H_2(\tX; \C)$.
We note that $\scrA_x^{c}$ is isomorphic to the Rees algebra of a Weyl algebra.  

We use the cocharacter $\xi\in\ft_\Z$ to induce a $\Z$-grading on $\scrA_x^{c}$, and denote by
$\scrA_{x,-}^{c}$ the direct sum of the non-positive weight spaces. 
Consider the left $\scrA_x^{c}$-module
$$\Theta^c_x := \scrA_x^{c} \otimes_{\scrA_{x, -}^{c}} B(\scrA_x^{c}).$$
Then $\Theta^c_x$ is positively reasonable in the sense of Remark \ref{posreasonable} as a graded $\scrA$-module, and we have
\begin{equation} \label{vermatrace} \tr^{c}_{\Theta^c_x}(1\otimes 1) = q^{w^c_{x}} \prod_{i} \frac{1}{1-q^{\alpha_i}}, \end{equation}
where the elements $\a_i\in\Sigma_+$ are the positive weights of the action of $T$ on the tangent space $T_x\tX$,
and $w^c_x\in\Pi^c$ is the linear map $ w^c_x : \Azo \rightarrow \C \hbar $ given by restricting $$ \scrA_0 \rightarrow (\scrA^c_x)_0 \rightarrow B(\scrA^c_x) \cong \C[\hbar] $$ to degree 2 \cite[Proposition 5.20]{BLPWgco}.

Let $M(\scrA_x) :=  S \otimes \scrA_{x, 0} /J_x$, where $J_x$ is defined exactly as in Equation \eqref{defJ}. 
Define $M(\scrA_x)^c := R^c\otimes_R M(\scrA_x)$ as in Section \ref{sec:traces},
and define $M(\scrA_x)_{\reg}$ and $M(\scrA_x)^c_{\reg}$ in the obvious manner.

\begin{lemma}\label{local M}
The natural map
\[ \pointrest_x: \Sreg\otimes\C[\hbar] \to M(\scrA_x)^{c}_{\reg} \]
is an isomorphism.
\end{lemma}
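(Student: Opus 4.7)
The plan is to establish surjectivity and injectivity of $\pointrest_x$ separately. Surjectivity will mirror the argument in the proof of Proposition \ref{finitely generated}, applied to the local algebra $\scrA^c_x$. Injectivity will follow by post-composing with the graded trace of the positively reasonable module $\Theta^c_x$ and exploiting the explicit formula \eqref{vermatrace}.

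For surjectivity, we use that $\scrA^c_x$ is the Rees algebra of the Weyl algebra on $T_x \tX$, with Darboux generators $u_i, v_i$ of $T$-weights $\pm \alpha_i$ satisfying $[u_i, v_i] = \hbar$. A short computation then gives $B(\scrA^c_x) \cong \C[\hbar]$, and the proof of Proposition \ref{M0} applied locally identifies $M(\scrA_x)^c_0 \cong \HH_0(\C[\hbar]) \cong \C[\hbar]$, which is generated by $1$ as a $\C[\hbar]$-module. The inductive argument of Proposition \ref{finitely generated} uses only Lemma \ref{putrootinfront} and the commutativity of $\scrA^c_x / \hbar \scrA^c_x$, both of which hold here, so the single generator $1$ of $M(\scrA_x)^c_0$ also generates $M(\scrA_x)^c_{\reg}$ over $\Sreg \otimes \C[\hbar]$. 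This yields surjectivity of $\pointrest_x$.

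For injectivity, we observe that $\Theta^c_x$ is positively reasonable, so Remark \ref{posreasonable} produces an $\Rreg$-module map
\[
\tr := \tr^c_{\Theta^c_x} : M(\scrA_x)^c_{\reg} \longrightarrow \Nreg^c,
\]
and \eqref{vermatrace} gives $\tr(\pointrest_x(1)) = Q := q^{w^c_x} \prod_i (1-q^{\alpha_i})^{-1}$. Suppose $f \in \Sreg \otimes \C[\hbar]$ satisfies $\pointrest_x(f) = 0$; then applying $\tr$ yields $f \cdot Q = 0$ in $\Nreg^c$. Since each $1 - q^{\alpha_i}$ acts invertibly on $\Nreg^c$, we may cancel to obtain $f \cdot q^{w^c_x} = 0$. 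Writing $f = g/h$ with $g \in S \otimes \C[\hbar]$ and $h$ a product of elements $1 - q^{\la_j}$ with $\la_j \in \Sigma_+$, the same invertibility gives $g \cdot q^{w^c_x} = 0$. But $g \cdot q^{w^c_x}$ is a finite sum $\sum_\nu a_\nu(\hbar) q^{\nu + w^c_x}$ with distinct exponents, which vanishes in $N^c$ only if every $a_\nu = 0$, i.e.\ $g = 0$. Hence $f = 0$.

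The most delicate point will be verifying that $B(\scrA^c_x) \cong \C[\hbar]$ and that Propositions \ref{M0} and \ref{finitely generated} transfer verbatim to the local Weyl algebra. This should be routine, since their arguments rely only on structural features shared between $\scrA$ and $\scrA^c_x$ (commutativity of the classical limit, Lemma \ref{putrootinfront}, and the $T$-weight decomposition), but it is the one non-mechanical ingredient that the plan requires.
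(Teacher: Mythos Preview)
Your proposal is correct and follows essentially the same approach as the paper: surjectivity by an induction on degree using the commutativity of $\scrA^c_x/\hbar\scrA^c_x$ together with the relation $1\otimes z_i a = \frac{q^{\alpha_i}}{1-q^{\alpha_i}}\otimes [a,z_i]$, and injectivity by post-composing with $\tr^c_{\Theta^c_x}$ and invoking \eqref{vermatrace}. The only packaging difference is that you obtain surjectivity by first computing $M(\scrA_x)^c_0\cong\C[\hbar]$ via Proposition~\ref{M0} and then appealing to Proposition~\ref{finitely generated}, whereas the paper simply rewrites that same induction directly in terms of the Darboux generators $z_i,w_i$; since the local analogue of Lemma~\ref{putrootinfront} is immediate for a Weyl algebra (any positive-weight monomial has a leading $z_i$), your transfer of those propositions is indeed routine.
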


\begin{proof}
We begin by showing that $\pointrest_x$ is a surjection. The algebra $\scrA_x$ is generated over $\C[\hbar]$ by positive degree elements $z_i, w_i$ 
with weights $\alpha_i,-\alpha_i$ and relations $[w_i, z_j] = \pm\delta_{ij}\hbar$.
By induction, we suppose that all elements of $M(\scrA_x)_\reg$ of degree strictly less than $d$ lie in the image of $\pointrest_x$. 
Let $m\in M(\scrA_x)^{c}_{\reg}$ be an element of degree $d>0$. 
Then there exist elements $a_i\in \scrA_x$ of weight $-\a_i$ and degree strictly less than $d$, $m'\in M(\scrA_x)^{c}_{\reg}$ of degree $d-2$ and $s_i \in \Sreg$ such that $$m = \hbar m' + \sum s_i \otimes z_i a_i.$$ 
It will therefore suffice to show that each $1\otimes z_ia_i$ lies in the image of $\pointrest_x$. 
We have
$$1\otimes z_i a_i = q^{\alpha_i} \otimes a_i z_i = q^{\alpha_i} \otimes (z_i a_i  + [a_i, z_i]),$$
which implies that $$(1-q^{\a_i})\otimes z_i a_i = q^{\a_i} \otimes  [a_i, z_i]$$
and therefore $$1\otimes z_i a_i = \frac{q^{\a_i}}{1-q^{\a_i}} \otimes [a_i, z_i].$$
Since $[a_i, z_i]$ is a multiple of $\hbar$, our inductive hypothesis implies that this element lies in the image of $\pointrest_x$.

It remains to show that the kernel of $\pointrest_x$ is trivial. 
The composition $$\tr^{c}_{\Theta^c_x}\circ\;\pointrest_x:\Sreg\otimes\C[\hbar]\to N^c$$
takes $s$ to $s\cdot \tr^{c}_{\Theta^c_x}(1)$, and this map is clearly injective by Equation \eqref{vermatrace}. 
Thus $\pointrest_x$ must be injective, as well.
\end{proof}

Let $\Treg := T \cap \Spec\Sreg$, and let
$$M_{T_\reg} := \cO(T_\reg)\otimes_{S} M.$$  
Let $M^{c,1}$ be the specialization of $M^c$ at $\hbar = 1$,
so that $M^c$ is isomorphic to the Rees module of the filtered module $M^{c,1}$.  Define $M^{c,1}_{\Treg}$ similarly.
For the remainder of this section, we will fix a splitting of the quantization exact sequence, so that
$M^{c,1}_{T_\reg}$ defines a $D(T_\reg)$-module by Remark \ref{diff-ops}. 
This choice of splitting is not essential in any way, but it is convenient because it allows us to use the language of D-modules.

Since $M^{c,1}_{T_\reg}$ is finitely generated over $\mathcal{O}(\Treg)$
(Corollary \ref{isolfixedfinitegeneration}), it defines a nonsingular D-module over $T_\reg$, and therefore comes from a vector bundle with flat connection; we will use the same notation
$M^{c,1}_{T_\reg}$ to refer to this vector bundle. 
The main result of this section, Corollary \ref{mainresultrank}, computes the rank of this vector bundle under certain assumptions.

We will say that $c$ is {\bf generic} if it satisfies the conditions of \cite[Theorem 5.12]{BLPWgco} 
and \cite[Lemma 5.21]{BLPWgco} for each 
fixed point $x \in \tX^T$.

\begin{proposition} \label{prop:ranksmaller}
If $c$ is generic, then
$\displaystyle \rk M^{c,1}_{T_\reg} \leq |\tX^T|.$ \end{proposition}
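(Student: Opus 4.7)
My plan is to exhibit $M^{c,1}_{T_\reg}$ as a coherent subsheaf of a vector bundle of rank $|\tX^T|$ on $T_\reg$, which will immediately give the desired rank bound. The ambient bundle is built from the local pictures at each fixed point. For each $x \in \tX^T$, the $T$-equivariant surjection $\scrA \twoheadrightarrow \scrA_x$ from \cite[Section 5.1]{BLPWgco} is compatible with the construction of the ideal $J$, so it descends to a natural $R^c$-linear map $\Psi_x \colon M^c \to M(\scrA_x)^c$. Assembling over $\tX^T$, Ore-localizing, and specialising $\hbar = 1$ produces
$$\Psi \colon M^{c,1}_{T_\reg} \longrightarrow \bigoplus_{x \in \tX^T} M(\scrA_x)^{c,1}_{T_\reg}.$$
By Lemma~\ref{local M} each summand on the right is free of rank one, so the target has rank $|\tX^T|$, and the proposition reduces to showing that $\Psi$ is injective.

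To establish injectivity I plan to detect $\ker \Psi$ through the graded traces of the Verma-type modules $\Theta^c_x$. Because $\Theta^c_x$ is naturally an $\scrA_x$-module, the trace $\tau_x := \tr^c_{\Theta^c_x}$ factors as $\tau_x = \rho_x \circ \Psi_x$, where $\rho_x \colon M(\scrA_x)^c_\reg \to N^c$ is the induced map. By Lemma~\ref{local M} and equation~\eqref{vermatrace}, $\rho_x$ sends the generator of the rank-one module $M(\scrA_x)^c_\reg$ to the nonzero element $q^{w^c_x}/\prod_i(1-q^{\alpha_i})$, and hence is injective. Consequently $\ker \Psi = \bigcap_x \ker \tau_x$, and it suffices to show that this common kernel vanishes generically on $T_\reg$.

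The hard part is this last step. The genericity assumption on $c$ --- via \cite[Theorem 5.12]{BLPWgco} and \cite[Lemma 5.21]{BLPWgco} --- guarantees that the weights $w^c_x$ are pairwise distinct, so the solutions $\tau_x$ of the dual connection are linearly independent on $T_\reg$. Taken alone, this produces only a \emph{lower} bound $\rk M^{c,1}_{T_\reg} \geq |\tX^T|$; upgrading to the matching upper bound will exploit the explicit rank-one local structure supplied by Lemma~\ref{local M}. Specifically, a nonzero section of the common kernel would map to zero in every rank-one local module, and an analysis of the singular behaviour of the $\tau_x$ near the boundary $\{1 - q^\alpha = 0\}$ of $T$ --- where the distinct leading exponents $w^c_x$ become visible --- will contradict the existence of such a section.
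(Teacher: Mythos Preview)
Your approach has a genuine gap in the final step, and it also differs substantially from the paper's argument.

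The paper's proof is a three-line semicontinuity argument: $M^{c,1}$ is a coherent sheaf on $\Spec S$, so the rank of its restriction to the open set $T_\reg$ is bounded above by the dimension of its fiber at the point $0\in\Spec S$. By Proposition~\ref{M0} that fiber is $\HH_0(B(\scrA))^{c,1}$, a quotient of $B(\scrA)^{c,1}$, and for generic $c$ the latter has dimension $|\tX^T|$ by \cite[Proposition 5.3]{BLPWgco}. No local models or trace maps are needed.

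Your route instead tries to \emph{embed} $M^{c,1}_{T_\reg}$ into the rank-$|\tX^T|$ bundle $\bigoplus_x M(\scrA_x)^{c,1}_{T_\reg}$. The setup is fine, and you correctly reduce to showing $\bigcap_x \ker\tau_x = 0$. But the argument you sketch for this vanishing is not a proof. You observe that distinct weights $w^c_x$ make the $\tau_x$ linearly independent \emph{as solutions}, and you correctly note that this only yields the lower bound $\rk M^{c,1}_{T_\reg}\ge |\tX^T|$. To get injectivity of $\Psi$ you would need the $\tau_x$ to \emph{span} the solution space, which is equivalent to already knowing $\rk M^{c,1}_{T_\reg}\le |\tX^T|$ --- the very thing you are trying to prove. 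The appeal to ``singular behaviour near $\{1-q^\alpha=0\}$'' does not break this circularity: $M^{c,1}_{T_\reg}$ lives entirely on $T_\reg$, away from those hypersurfaces, so there is no singular behaviour to analyze there, and you have not indicated any mechanism by which a putative kernel element would be forced to acquire one.

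There is also a hypothesis mismatch. You invoke pairwise distinctness of the $w^c_x$ as a consequence of the genericity of $c$, but in the paper this distinctness (even of cosets in $\ft^*/\ft^*_\Z$) is established in Lemma~\ref{distinctweights} only under the additional assumption that $\rho_{x,y}\neq 0$ for all $x\neq y$. That assumption is \emph{not} part of Proposition~\ref{prop:ranksmaller}; indeed the paper remarks that it fails for $\operatorname{Hilb}_n(\C^2)$. In the paper's logic, injectivity of the map $\theta^c$ (which is essentially your $\Psi$) is the \emph{conclusion} of Corollary~\ref{mainresultrank}, obtained only after combining Proposition~\ref{prop:ranksmaller} with the separately-proved lower bound of Proposition~\ref{prop:rankbigger}. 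You are attempting to prove that conclusion directly, but without an independent upper bound there is no leverage to do so.
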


\begin{proof}
We have a coherent sheaf $M^{c,1}$ on $\Spec S$ whose restriction to $T_\reg$ is equal to the vector bundle $M^{c,1}_{T_\reg}$,
thus the rank of this vector bundle is bounded above by the dimension of the fiber of $M^{c,1}$ at the point $0\in\Spec S$. 
By Proposition \ref{M0}, this fiber is isomorphic to $\HH_0(B(\scrA))^{c,1}$, which is a quotient of $B(\scrA)^{c,1}$.  
For generic $c$, the dimension of $B(\scrA)^{c,1}$ is equal to $|\tX^T|$ by \cite[Proposition 5.3]{BLPWgco}.
\end{proof}

Our splitting of the quantization exact sequence identifies $\Pi^c$ with $\ft^*$, thus $q^{w^c_x}$ defines a multi-valued function on $T_\reg$ with monodromy $e^{2\pi i w^c_x(\tau)}$ around any loop $\tau \in \pi_1(T) \cong \ft_\Z$. 
Consider the rank one D-module $\mathcal{O}(T_\reg)q^{w^c_x}$ generated by $q^{w^c_x}$. In other words, it is the quotient of $D(T_\reg)$ by the left ideal $\langle \partial_u - w^c_x(u) \rangle$ for $u \in \frak{t}$.
Equation \eqref{vermatrace} and Lemma \ref{local M} together imply that the map $\tr^{c}_{\Theta^c_x}:M^c\to N^c$
descends to a nonzero map $$\tr^{c,1}_{\Theta^c_x}:M^{c,1}_{\Treg}\to \mathcal{O}(T_\reg)q^{w^c_x}.$$
Since the target is a simple $D$-module, it must be surjective.

Consider the sum
\begin{equation*} \label{dualmap}  
\theta^c: M_{T_\reg}^{c,1}
\to \bigoplus_{x\in\tX^T} \mathcal{O}(T_\reg)q^{w^c_x}
\end{equation*}
of these maps over all fixed points.
If the cosets $w^c_x+\ft^*_\Z$ are distinct, then the summands of the target
are non-isomorphic simple $D$-modules and the map $\theta^c$ must therefore be surjective. 
We will show that, under additional hypotheses, this is the case.
Specifically, in Appendix \ref{appendixWebster} we define maps $\rho_{x,y}^* : H^2(\tX ; \C) \rightarrow \ft^*$ for any pair of fixed points $x, y \in \tX^T$, and we ask that these maps be nonzero for $x \neq y \in \tX^T$. This holds, for instance, for hypertoric varieties attached to unimodular arrangements and for the Springer resolution, but not for the balanced Hilbert scheme of $n$ points in the plane. 
\begin{lemma} \label{distinctweights}
Suppose that the map $\rho_{x,y}$ in nonzero for all $x\neq y\in\tX^T$.
For $c$ in a non-empty analytic open subset, the cosets $w^c_x+\ft^*_\Z$ are distinct.
\end{lemma}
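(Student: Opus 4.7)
The plan is to reduce the claim to an elementary genericity argument, using the Appendix's construction of $\rho_{x,y}^*$ as the linear part of the difference $w^c_x - w^c_y$ in $c$. Specifically, I will first invoke from Appendix \ref{appendixWebster} the fact that, for any pair $x,y \in \tX^T$, the assignment $c \mapsto w^c_x - w^c_y \in \ft^*$ is affine-linear in $c \in H^2(\tX;\C)$ with linear part precisely $\rho_{x,y}^*$. Once this dependence is in hand, the lemma becomes a statement about when an affine-linear map with nonzero linear part can avoid a translated copy of the lattice $\ft^*_\Z$.

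Given the hypothesis that each $\rho_{x,y}^*$ is nonzero, for every pair $x \neq y$ the ``bad'' locus
$$
B_{x,y} := \{c \in H^2(\tX;\C) \mid w^c_x - w^c_y \in \ft^*_\Z\}
$$
is the preimage of the discrete set $\ft^*_\Z \subset \ft^*$ under a nonzero affine-linear map. Hence $B_{x,y}$ is a countable union of parallel proper affine hyperplanes in $H^2(\tX;\C)$. Because $\tX^T$ is finite, the full bad locus $\bigcup_{x \neq y} B_{x,y}$ is still a countable union of proper affine hyperplanes, and its complement is nonempty (in fact dense) by Baire category.

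To upgrade ``nonempty'' to ``contains a nonempty analytic open set,'' I will use that the relevant condition is analytically open. Pick any $c_0$ in the complement above, so that at $c_0$ the finitely many differences $w^{c_0}_x - w^{c_0}_y$, for $x \neq y$, all lie in the open set $\ft^* \setminus \ft^*_\Z$. Since these differences depend continuously (indeed affine-linearly) on $c$, and since $\ft^*_\Z$ is closed and discrete in $\ft^*$, there is an analytic open neighborhood $U$ of $c_0$ on which none of the finitely many differences meets $\ft^*_\Z$. On $U$ the cosets $w^c_x + \ft^*_\Z$ are therefore pairwise distinct, proving the lemma.

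The only nontrivial input is the affine-linearity statement $w^c_x - w^c_y = \rho_{x,y}^*(c) + (\text{const})$; this is an invariant of the central quotient $\scrA_x \twoheadrightarrow \scrA_x^c$, and will be read off directly from the formulas that define $\rho_{x,y}^*$ in the appendix. After that step, the argument is a routine Baire-plus-openness genericity argument, so I do not expect any serious obstacle.
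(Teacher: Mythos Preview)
Your proposal is correct and follows essentially the same approach as the paper: invoke Theorem~\ref{thm:differenceofhighestweights} to write $w^c_x - w^c_y$ as an affine-linear function of $c$ with linear part $\rho_{x,y}^*$, then argue by genericity. The paper's proof is slightly more direct in that it observes the bad locus is a \emph{locally finite} (``discrete'') union of proper affine subspaces---hence closed---rather than separating the argument into a Baire-category step and an openness-by-continuity step; also note that the preimages are proper affine subspaces of codimension $\geq 1$, not necessarily hyperplanes.
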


\begin{proof}
For any two fixed points $x\neq y\in\tX^T$,
Theorem \ref{thm:differenceofhighestweights} (with $ \hbar = 1 $) tells us that
$$w^c_x - w^c_y =  \rho_{x,y}^*(c) + \chi_{x}-\chi_{y}, $$
where $ \chi_x \in \ft^* $ is independent of $ c $.
Since we require $\rho_{x,y}$ to be nonzero, the set of $c$ for which any one of these differences lies in $\ft^*_\Z$ forms a discrete union of affine subspaces of codimension $\geq 1$.
\end{proof}
\begin{proposition} \label{prop:rankbigger}
Suppose that the map $\rho_{x,y}$ in nonzero for all $x\neq y\in\tX^T$. For all $c \in H^2(\tX; \C)$, we have
$$\rk M^{c,1}_{T_\reg} \geq |\tX^T|.$$ \end{proposition}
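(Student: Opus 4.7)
The plan is to combine the generic surjectivity discussed immediately before the proposition with an upper semi-continuity argument, extending the rank bound from generic $c$ to all $c$.

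First, for $c$ in the non-empty open subset $U \subset H^2(\tX; \C)$ provided by Lemma~\ref{distinctweights}, the cosets $w^c_x + \ft^*_\Z$ are pairwise distinct, so the simple $D$-modules $\mathcal{O}(T_\reg) q^{w^c_x}$ are pairwise non-isomorphic. By Equation~\eqref{vermatrace}, each trace map $\tr^{c,1}_{\Theta^c_x}$ is nonzero and hence surjective onto its simple target. A standard consequence of Schur's lemma implies that every submodule of a finite direct sum of pairwise non-isomorphic simples is itself a direct sum of a subset of the summands, so the combined map $\theta^c$ is surjective for all $c \in U$. Since the target is a vector bundle of rank $|\tX^T|$ on $T_\reg$, this yields $\rk M^{c,1}_{T_\reg} \geq |\tX^T|$ for all such $c$.

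Next, to extend the bound to arbitrary $c$, I would organize the vector bundles $M^{c,1}_{T_\reg}$ into a single coherent sheaf $\mathcal{M}$ on $T_\reg \times \Spec \Sym H^2(\tX; \C)$. Coherence follows from Corollary~\ref{isolfixedfinitegeneration} specialized at $\hbar = 1$: the module $M^{\hbar = 1}_{T_\reg}$ is finitely generated over $\mathcal{O}(T_\reg) \otimes \Sym H^2(\tX; \C)$. Because $M^{c,1}_{T_\reg}$ is locally free on $T_\reg$ for every $c$, the fiber dimension of $\mathcal{M}$ at any point $(t,c)$ equals $\rk M^{c,1}_{T_\reg}$. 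The standard upper semi-continuity of fiber dimensions of coherent sheaves then tells us that the locus $\{(t,c) : \dim \mathcal{M}_{(t,c)} \geq |\tX^T|\}$ is Zariski closed in $T_\reg \times H^2(\tX; \C)$. Since it contains $T_\reg \times U$ and non-empty analytic open subsets of an affine space are Zariski dense, this locus is the whole product, giving the desired inequality for every $c$.

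The main technical point to check carefully is that the fibers of $\mathcal{M}$ genuinely compute the ranks of the vector bundles $M^{c,1}_{T_\reg}$, which amounts to commuting specialization in $c$ with the $\mathcal{O}(T_\reg)$-module structure. This is routine given the coherence in Proposition~\ref{finitely generated}, but it should be spelled out. An alternative approach would be to analyze $\theta^c$ directly at the special $c$ where weight cosets collide (perhaps reorganizing the target into isotypic components and arguing surjectivity onto each), but the semi-continuity route avoids having to trace what happens to individual trace maps as $c$ degenerates.
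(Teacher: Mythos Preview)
Your proposal is correct and follows essentially the same route as the paper's proof: reduce to generic $c$ via coherence of the family and upper semi-continuity of fiber dimension, then invoke Lemma~\ref{distinctweights} to guarantee distinct weight cosets on a nonempty open set, where surjectivity of $\theta^c$ onto a direct sum of non-isomorphic simples gives the rank bound. The paper compresses this into two sentences, whereas you spell out the semi-continuity step and the identification of fibers of the family with ranks of the individual $M^{c,1}_{T_\reg}$; one small notational slip is that the base of the family should be $\Spec \Sym H_2(\tX;\C) \cong H^2(\tX;\C)$ rather than $\Spec \Sym H^2(\tX;\C)$.
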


\begin{proof}
Since $M_{T_\reg}$ is coherent over $\H^2(\tX; \C) \times T_{\reg} \times \Spec \C[\hbar]$, 
it is enough to prove this for $c$ lying in a nonempty analytic open subset. By Lemma \ref{distinctweights}, there exists such a subset 
for which the cosets $w^c_x+\ft^*_\Z$ are distinct.  For $c$ in this subset, the map $\theta^c$ must be surjective, thus 
$\rk M_{T_\reg}^{c,1} \geq |\tX^T|$. 
\end{proof}

Since $M_{\reg}^{c,1}$ is coherent over $\Spec S_\reg$ and $T_\reg$ is dense in $\Spec S_\reg$, we have the following mild strengthening of Proposition \ref{prop:rankbigger}.

\begin{corollary} 
Suppose that the maps $\rho_{x,y}$ are nonzero for all $x\neq y\in\tX^T$.
For all $c$ and any $q \in \Spec S_\reg$, the fiber of $M^{c,1}_{\reg}$ at $q$ has dimension greater than or equal to $|\tX^T|.$
\end{corollary}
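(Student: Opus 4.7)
The plan is to deduce this corollary from Proposition \ref{prop:rankbigger} by a semicontinuity argument, using exactly the two observations that the author highlights: $M_{\reg}^{c,1}$ is coherent over $\Spec S_\reg$, and $T_\reg$ sits as a dense open subset of $\Spec S_\reg$.

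First I would view $M_{\reg}^{c,1}$ as a coherent sheaf $\mathcal{F}$ on the irreducible affine variety $\Spec S_\reg$. Its restriction to $T_\reg$ is precisely the coherent sheaf associated to $M^{c,1}_{T_\reg}$, which is the vector bundle with flat connection identified in Section \ref{sec:rank}. By Proposition \ref{prop:rankbigger}, under the hypothesis that $\rho_{x,y}$ is nonzero for all $x\neq y\in\tX^T$, this bundle has rank at least $|\tX^T|$. Since $T_\reg$ is dense open in $\Spec S_\reg$, this rank equals the generic fiber dimension of the coherent sheaf $\mathcal{F}$ on $\Spec S_\reg$.

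Next I would invoke upper semicontinuity of fiber dimensions for coherent sheaves: for any coherent sheaf $\mathcal{F}$ on an irreducible variety $Y$, the function $y\mapsto \dim_{k(y)}\mathcal{F}\otimes_{\mathcal{O}_Y} k(y)$ is upper semicontinuous, so the generic rank is the minimum value attained on $Y$. Applied to our $\mathcal{F}$ and $Y = \Spec S_\reg$, this gives
\[
\dim\bigl(\mathcal{F}\otimes k(q)\bigr)\;\geq\;\rk M^{c,1}_{T_\reg}\;\geq\;|\tX^T|
\]
for every $q\in\Spec S_\reg$, which is the desired inequality.

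There is essentially no obstacle here beyond bookkeeping: the genuine work was already done in Proposition \ref{prop:rankbigger} (constructing the surjection $\theta^c$ and verifying, via Lemma \ref{distinctweights}, that the highest weights separate over an open set of $c$'s), and in Corollary \ref{isolfixedfinitegeneration} (establishing coherence). The only subtlety worth explicitly noting is that the fibers in question are fibers over $\Spec S_\reg$ rather than over the parameter space of $c$'s, so the argument runs for each fixed $c$ individually and no further genericity in $c$ is needed at this stage.
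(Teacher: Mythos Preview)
Your proposal is correct and matches the paper's approach exactly: the paper simply remarks that coherence of $M_{\reg}^{c,1}$ over $\Spec S_\reg$ together with density of $T_\reg$ gives the corollary as a mild strengthening of Proposition \ref{prop:rankbigger}, which is precisely the upper semicontinuity argument you spell out.
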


Combining Propositions \ref{prop:ranksmaller} and \ref{prop:rankbigger}, we obtain the main result of this section.

\begin{corollary} \label{mainresultrank}
Suppose that the map $\rho_{x,y}$ in nonzero for all $x\neq y\in\tX^T$. For $c\in H^2(\tX;\C)$ generic, $\rk  M^{c,1}_{T_\reg} = |\tX^T|$, the map $\theta^c$ is an isomorphism,
and thus we have an isomorphism of D-modules 
$$M_{T_\reg}^{c,1} \cong \bigoplus_{x \in \tX^T} \mathcal{O}(T_\reg)q^{w^c_x}.$$
\end{corollary}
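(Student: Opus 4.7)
The plan is to simply combine the two bounds established just above with the additional observation that the map $\theta^c$ is surjective onto its target for generic $c$, then conclude by dimension count.

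First I would fix $c$ in the intersection of the nonempty open sets guaranteed by (i) genericity in the sense of \cite[Theorem 5.12]{BLPWgco} and \cite[Lemma 5.21]{BLPWgco} (so that Proposition \ref{prop:ranksmaller} applies and gives $\rk M^{c,1}_{T_\reg}\leq |\tX^T|$) and (ii) Lemma \ref{distinctweights} (so that the cosets $w^c_x+\ft^*_\Z$ are pairwise distinct). Proposition \ref{prop:rankbigger} gives the reverse inequality $\rk M^{c,1}_{T_\reg}\geq |\tX^T|$, so the rank equals $|\tX^T|$ exactly.

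Next I would argue that $\theta^c$ is surjective. Each summand $\mathcal{O}(T_\reg)q^{w^c_x}$ is a simple $D(T_\reg)$-module, and by the distinctness of the cosets $w^c_x+\ft^*_\Z$, the summands are pairwise non-isomorphic. The discussion preceding Lemma \ref{distinctweights} shows that each component map $\tr^{c,1}_{\Theta^c_x}$ is nonzero, hence surjective onto the corresponding simple summand. A standard argument then shows that a $D$-module map into a finite direct sum of pairwise non-isomorphic simple modules, all of whose components are surjective, is itself surjective: the image is a submodule whose projection to each summand is everything, so by semisimplicity and non-isomorphism of the summands the image must be the whole target.

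Finally, $\theta^c$ is a surjection between two vector bundles on $T_\reg$ of the same rank $|\tX^T|$, hence an isomorphism. This yields
\[ M_{T_\reg}^{c,1} \;\cong\; \bigoplus_{x\in\tX^T} \mathcal{O}(T_\reg)q^{w^c_x}, \]
as claimed. No step here is really an obstacle in its own right, since all the substantive work has been done in Propositions \ref{prop:ranksmaller}, \ref{prop:rankbigger}, and Lemma \ref{distinctweights}; the only thing to be careful about is ensuring that the locus of generic $c$ in $H^2(\tX;\C)$ on which both the upper-bound genericity and the distinct-cosets condition simultaneously hold is nonempty, which is immediate since both conditions cut out complements of countable unions of positive-codimension subvarieties.
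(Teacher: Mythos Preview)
Your proof is correct and follows essentially the same approach as the paper, which simply states that the result follows by combining Propositions \ref{prop:ranksmaller} and \ref{prop:rankbigger}. You have merely spelled out the details that the paper leaves implicit: the surjectivity of $\theta^c$ for $c$ with distinct cosets (already observed in the paragraph preceding Lemma \ref{distinctweights} and used in the proof of Proposition \ref{prop:rankbigger}), and the fact that a surjection of vector bundles of equal rank is an isomorphism.
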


\begin{remark}
Many interesting symplectic resolutions $\tX$ with isolated fixed-points, such as $\tX = \operatorname{Hilb}_n(\C^2)$, do not have distinct restriction maps $H^2_T(\tX ; \C) \to H^2_T(x ; \C)$. In this case the maps $ \rho_{x,y} $ defined in the appendix vanish.  On the other hand, the arguments in this section can be adapted to situations where there exists an element $m \in M_\reg$ such that the functions $\tr^c_{\Theta^c_x}(m)$ are linearly independent. 
The case where $\tX$ has non-isolated fixed points is more mysterious.
\end{remark}

\section{Geometric construction}\label{sec:geometry}
We again fix a conical symplectic singularity $X$ as in Section \ref{sec:cones}, and we now assume that $X$ admits a $(T\times\cs)$-equivariant
projective symplectic resolution $\tX$, which we fix throughout this section.
The odd cohomology of $\tX$ vanishes \cite[Proposition 2.5]{BLPWquant}, thus we have a short exact sequence
\begin{equation}\label{eq:coh-exact}
0\to H^2_{T\times\cs}(pt; \C) \to H^2_{T\times\cs}(\tX; \C) \to H^2(\tX; \C)\to 0,\\
\end{equation}
which we will call the {\bf cohomology exact sequence}.  Given $u\in H^2_{T\times\cs}(\tX; \C)$, let $\bar u$
denote its image in $H^2(\tX; \C)$.

\subsection{Quantum cohomology}
Let $H_2(\tX; \Z)_{\operatorname{free}}$ denote the quotient of $H_2(\tX; \Z)$ by its torsion subgroup.
Let $QH^*_{T\times\cs}(\tX; \C)$ be the equivariant quantum cohomology ring of $\tX$, with the quantum product shifted by the canonical theta characteristic.
The underlying graded vector space of $QH^*_{T\times\cs}(\tX; \C)$ is equal to the tensor product of $H^*_{T\times\cs}(\tX; \C)$ with the completion of the semigroup
ring of the semigroup of effective curve classes in $H_2(\tX; \Z)_{\operatorname{free}}$.  Let $\star$ denote the quantum product
and let $\hbar\in H^*_{T\times\cs}(pt; \C)$ be the weight of the symplectic form.
In \cite[Section 2.3.4]{okounkov2015enumerative}, Okounkov conjectures that there exists a finite set $\Delta_+\subset H_2(\tX; \Z)_{\operatorname{free}}$
and an element $L_\a\in H^{2\dim X}(\tX \times_X \tX; \C)$ for each $\a\in\Delta_+$
such that, for all $u\in H^2_{T \times \C^{\times}}(\tX; \C)$, 
$$u\star \cdot = 
u \cup \cdot + \hbar \sum_{\a\in\Delta_+} \langle \a, \bar u\rangle \frac{q^\a}{1-q^\a} L_\a(\cdot),$$
where $L_\a$ acts via convolution.
We will assume that this conjecture holds. The minimal such subset $\Delta_+$ is called the set of {\bf positive K\"ahler roots},
and the set $\Delta := \Delta_+ \cup -\Delta_+$ is called the set of {\bf K\"ahler roots}.
We will further assume that $\Delta_+$ spans the lattice $H_2(\tX; \Z)_{\operatorname{free}}$.

Let $$F := \C\{q^\a\mid \a\in\N\Delta_+\}\and
\Freg := F\!\left[\textstyle\frac{1}{1-q^{\a}}\;\Big{|}\; \a\in\Delta_+\right].$$
Our assumption that $\Delta_+$ spans $H_2(\tX; \Z)_{\operatorname{free}}$ implies that $\Spec F$ contains the {\bf K\"ahler torus} 
$K := H^2(\tX, \C^\times)$ as a dense open subset. Our assumptions also imply that quantum multiplication by a divisor restricts to an operator on the graded vector space
$$\Qreg := \Freg \otimes H^*_{T\times\cs}(\tX; \C),$$ where $\Freg$ lives in degree zero.

\begin{remark}\label{not quite the ring}
If the quantum cohomology ring $QH^*_{T\times\cs}(\tX; \C)$ is generated by divisors, then our assumption implies that $\Qreg$ is a subring of the quantum
cohomology ring.  In general, however, we do not know that $\Qreg$ is a ring, as we make no assumptions about quantum multiplication by classes in degree greater than two.
\end{remark}

\subsection{The specialized quantum D-module}\label{sec:quantum-connection}
Let $$E := F\otimes \Sym H^2_{T\times\cs}(\tX; \C),$$
with multiplication defined by the formula
$$u\, q^\a = q^\a \big(u + \hbar \langle\a,\bar u\rangle\big)$$
for all $\a\in\N\Delta_+$ and $u\in\Sym H^2_{T\times\cs}(\tX; \C)$.
We also let $\Ereg$ be the Ore localization of $E$ with respect to the multiplicative subset generated by $(1 - q^{\alpha})$ for $\alpha \in \Delta_+$,
which is well-defined by the same argument as in Lemma \ref{orecondition}.
The algebra $\Ereg$ acts in a natural way on $\Qreg = \Freg \otimes H^*_{T\times\cs}(\tX; \C).$
The subring $\Freg \subset \Ereg$ acts by multiplication on the first tensor factor, while an element $u\in H^2_{T\times\cs}(\tX; \C)$
acts by the operator $\hbar \partial_{\bar u} - u \star $ where $\partial_{\bar u}$ is the $\cohtor$-equivariant vector field on $\Spec F$
defined by the equation $\partial_{\bar u}(q^\a) = \langle \a, \bar u \rangle q^\a $.

\begin{remark}\label{quantum D-module}
Just as we defined $R_T^c$ by specializing $H_2(\tX; \C)\subset\Azo$ and localizing from $\Spec S$ to $T$
(Remark \ref{diff-ops}),
we also define $E_K^c$ for any $c\in\ft$ by specializing $\ft^*\subset H^2_{T\times\cs}(\tX; \C)$
and localizing from $\Spec F$ to $\cohtor$.  Then
$E_K^c$ is non-canonically isomorphic to the Rees algebra of differential operators on $\cohtor$.
If we further localize to $\cohtorreg := \cohtor\cap \Spec \Freg$,
we obtain the Rees algebra $E^c_{K_\reg}$ of differential operators on $\cohtorreg$ acting on $\mathcal{O}(\cohtorreg) \otimes H^*_{\cs}(\tX; \C)$.
\end{remark}

\begin{remark}\label{CY}
In our construction, the ring $\C[\hbar]$ plays two {\em a priori} unrelated roles. It is identified both with the subring of $E$ generated by the Rees parameter, and with the ring of equivariant parameters $H^2_{\cs}(pt; \C)$ acting on $H^*_{T \times \cs}(\tX; \C)$.
There is a more general construction in which one does not make this identification. 
Let  $$\mathfrak{E} := F\otimes \Sym H_{T\times\cs}^2(\tX; \C) \otimes \C[z]$$ be the algebra with relations $u\, q^\a = q^\a \big(u + z \langle\a,\bar u\rangle\big)$, containing the central subalgebra $\C[z,\hbar]$; we then have $E \cong \mathfrak{E} / (z - \hbar) \frak{E}$. 
The ring $\mathfrak{E}$ and its localization $\mathfrak{E}_\reg$ act in a natural way on $$\frak{Q}_\reg := \mathcal{O}(\cohtorreg) \otimes H^*_{T\times\cs}(\tX; \C) \otimes \C[z],$$
and we have $\Qreg \cong \frak{Q}_\reg/ (z-\hbar) \frak{Q}_\reg$. The $\mathfrak{E}_\reg$-module $\frak{Q}_\reg$ 
is traditionally called the {\bf quantum D-module}. Thus our construction is a specialization of the quantum D-module, sometimes called the {\bf Calabi-Yau specialization}. This specialization is often quite drastic: in many known cases, the monodromy of the quantum D-module becomes trivial at $z=\hbar$. Although the module $\frak{Q}_\reg$ motivates our definition of $\Qreg$, it will play no further role in this paper.
\end{remark}

\begin{remark}\label{Q0}
The advantage of working over $\Spec \Freg$ rather than over $\cohtorreg$
is that it makes sense to set $q$ equal to zero.  The specialization $Q_0 := \C_0\otimes_{\Freg} \Qreg$ is a module
over $\Sym H^2_{T\times\cs}(\tX; \C)$, and it is canonically isomorphic to $H_{T \times \cs}^*(\tX;\C) $.
\end{remark}
\begin{example}\label{sl2example3}
Continuing from Examples \ref{sl2example1} and \ref{sl2example2}, suppose that $X$ is the Kleinian singularity of type $A_1$,
in which case $\tX \cong T^*\mathbb{P}^1$.
We may choose a basis $\hbar, u_1, u_2$ for $H^2_{T\times\cs}(\tX; \C)$ such that
$\bar u_1 = \bar u_2$ and the classical cohomology ring is
$$H^*_{T\times\cs}(\tX; \C) \cong \C[u_1,u_2,\hbar]\Big{/}\langle u_1u_2\rangle.$$
In quantum cohomology, we have $$u_1 \star u_2 = \frac{\hbar q}{1-q}(\hbar + u_1 + u_2) = q (\hbar + u_1) \star (\hbar + u_2).$$
This implies that
$$QH^*_{T\times\cs}(\tX; \C) \cong \C[u_1,u_2,\hbar][[q]]\Big{/}\big\langle u_1\star u_2 - q (\hbar + u_1) \star (\hbar + u_2)\big\rangle$$
and
$$\Qreg \cong \C\!\left[u_1,u_2,\hbar,q, (1-q)^{-1}\right]\Big{/}\big\langle u_1 \star u_2 - q (\hbar + u_1) \star (\hbar + u_2)\big\rangle.$$
This is a module over $\Ereg$, which is generated over $\C[\hbar]$ by $u_1$, $u_2$, $q$, and $(1-q)^{-1}$, with $[u_1,u_2] = 0$ and $u_iq = q(u_i+\hbar)$.
The element $q$ acts on $\Qreg$ by scalar multiplication and $u_i$ acts by $\hbar\partial - u_i\star$, where $\partial$ is the vector field given by $\partial(q) = q$.
\end{example}

\section{Duality}\label{sec:duality}
In this section we formulate our main conjecture, which says that the modules constructed in Sections \ref{sec:algebra} and \ref{sec:geometry}
are swapped under symplectic duality.

\subsection{Symplectic duality}\label{sec:sd}
Let $X^!$ be {\bf symplectic dual} to $X$ in the sense of \cite[Section 10.3]{BLPWgco}.
We assume that $X^!$ admits a symplectic resolution $\tX^!$, but we make no such assumption about $X$.
We will not review the full definition of symplectic duality here, but rather focus
on certain manifestations of this relationship and specific examples of dual pairs, which we list below.
Our notational convention will be to denote everything related to $X^!$ with a superscript.  For example, $T^! $ is the maximal torus of $ \Aut(X^!)$, and so on.
\begin{itemize}
\item
The most important aspect of the relationship between $X$ and $X^!$ is that
the quantization exact sequence \eqref{eq:quant-exact} for $X$ is isomorphic to the cohomology exact sequence \eqref{eq:coh-exact}
for $X^!$.  That is, we have isomorphisms $ H_2(\tXreg; \C) \cong (\ft^!)^*$, $\Azo \cong H^2_{T^!\times\cs}(\tX^!; \C) $, and
$\ft \cong H^2(\tX^!; \C)$, compatible with the maps.\footnote{The existence of such isomorphisms appears in \cite[Definition 10.15]{BLPWgco}.
A choice of isomorphism $\Azo \cong H^2_{T^!\times\cs}(\tX^!; \C)$ was not previously considered to be part of the data of symplectic
duality, but the examples that we consider here all come with a natural such choice.}  Moreover, we assume that this last isomorphism 
restricts to a lattice isomorphism $ \ft^*_\Z \cong H_2(\tX^!; \Z) $, which induces an isomorphism $T \cong K^! $. 
\item
In Section \ref{sec:ring}, we had to choose a generic vector $\xi\in\ft_\Z\subset\ft_\R\cong H^2(\tX^!; \R)$
that does not vanish on any of the equivariant roots of $X$.
It is expected that the first Chern class of any ample line bundle on $\tX^!$ will have this property,
and that with this choice the positive equivariant roots for $X$ will coincide with the positive K\"ahler roots for $X^!$ \cite[Section 3.1.8]{okounkov2015enumerative}
(see also the coincidence of the twisting and shuffling arrangements in \cite[Definition 10.1]{BLPWgco}).
We will assume that this is the case.  In particular, this implies that the rings $S$ and $F^!$ are canonically identified, leading to an isomorphism of toric varieties $\Spec S \cong  \Spec F^! $ extending the above isomorphism of tori.  Moreover, this implies a graded ring isomorphism $$R= S\otimes\Sym\Azo\cong F^!\otimes H^2_{T^!\times\cs}(\tX^!; \C) = E^!.$$
\end{itemize}

The main examples of dual pairs that we will consider in this paper are hypertoric varieties, which are dual to other hypertoric varieties,
and the nilpotent cone in $\fg^*$ for a semisimple Lie algebra $\fg$, which is dual to the nilpotent cone in $(\fg^!)^*$, where $\fg^!$ is the Langlands dual
of $\fg$.  Given an inclusion of groups $G\subset\tilde{G}$ and a representation $V$ of $\tilde{G}$, one can construct the Coulomb and Higgs branches
of the associated gauge theory; when they are both conical symplectic singularities, they are expected to be symplectic dual.  This class of examples
includes hypertoric varieties (the case where $\tilde{G}$ is abelian) as well as the nilpotent cone in $\sl_n^*$.

\subsection{Main conjecture} \label{sec:mainconj}
Let $X$ and $X^!$ be symplectic dual.

\begin{conjecture}\label{pi}
There is an isomorphism $\Mreg\cong \Qreg^!$ of graded modules over $\Rreg\cong\Ereg^!$
taking $1\in\Mreg$ to $1\in\Qreg^!$.
\end{conjecture}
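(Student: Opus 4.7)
The plan is to reduce Conjecture \ref{pi} to a comparison of left annihilators of the generator $1$, verify this comparison at $q=0$ together with a rank count, and then lift the identification to generic $q$ via the D-module structure. First, one shows that both $\Mreg$ and $\Qreg^!$ are cyclic $\Rreg$-modules generated by $1$. For $\Qreg^!$ this is automatic when $QH^*_{T^!\times\cs}(\tX^!;\C)$ is divisor-generated, since iterated action of $H^2_{T^!\times\cs}(\tX^!;\C)\subset \Ereg^!$ on $1$ then reaches every class. For $\Mreg$, Proposition \ref{finitely generated} together with Lemma \ref{putrootinfront} lets one rewrite any element of $\scrA_0$ as a differential operator applied to $1$, using the $q^\la$-twisted relations built into $J$. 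With cyclicity in hand, the conjecture reduces to the statement that the left annihilators of $1$ coincide under the identification $\Rreg\cong \Ereg^!$ supplied by symplectic duality.

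Next, I would match the two modules after specialization at $q=0$. By Proposition \ref{M0}, $M_0\cong \HH_0(B(\scrA))$, and by Remark \ref{Q0}, $Q_0^!\cong H^*_{T^!\times\cs}(\tX^!;\C)$. The equivariant Hikita conjecture identifies $B(\scrA)$ with $H^*_{T^!\times\cs}(\tX^!;\C)$, and since the latter is commutative, $\HH_0(B(\scrA))=B(\scrA)$ matches $Q_0^!$ on the nose. Combined with Corollary \ref{mainresultrank}, which gives $\rk M^{c,1}_{T_\reg}=|\tX^T|=|(\tX^!)^{T^!}|$ for generic $c$, and with the standard equivariant-localization count of the rank of $\Qreg^!$, this shows that the two modules are finitely generated of the same generic rank and agree on the special fiber over $q=0$.

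The heart of the proof is to lift this agreement to generic $q$ by matching the D-module structure. On the $\Qreg^!$ side, a divisor $u\in H^2_{T^!\times\cs}(\tX^!;\C)$ acts by the quantum connection $\hbar\partial_{\bar u}-u\star$, with the quantum correction governed by the conjectural operators $L_\a$ indexed by K\"ahler roots. On the $\Mreg$ side, the element of $\Azo\subset\Rreg$ corresponding to $u$ acts through multiplication and the $q^\la$-twisted commutators from $J$. The conceptual matching would identify each $L_\a$ with a convolution operator extracted from the weight decomposition of $\scrA$, with the highest-weight shifts $w^c_x$ of Theorem \ref{thm:differenceofhighestweights} serving as asymptotic data common to both sides; the graded traces $\tr^{c,1}_{\Theta^c_x}$ of Section \ref{sec:rank} should then coincide with the fundamental solutions to the quantum differential equation provided by stable envelopes, which would force the annihilators to agree.

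The main obstacle is that no uniform formula for the $L_\a$ in terms of quantization data is currently available, so a fully general proof of Conjecture \ref{pi} seems out of reach. The strategy in practice is therefore case by case: for hypertoric varieties both sides admit explicit combinatorial presentations that can be matched directly, while for the Springer resolution one invokes the Bezrukavnikov--Okounkov description of the equivariant quantum cohomology of $T^*(G/B)$ via a trigonometric Cherednik algebra, together with a parallel presentation of $\Mreg$ extracted from the enhanced enveloping algebra $\mathcal{A}$ of Example \ref{Springer example}. These matchings are what Theorems \ref{hypertoric-pi} and \ref{Springer-pi} carry out.
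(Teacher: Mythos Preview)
The statement is a \emph{conjecture}, and the paper does not prove it in general; it proves only the hypertoric and Springer cases (Theorems~\ref{hypertoric-pi} and~\ref{Springer-pi}). Your proposal is honest about this, but the general strategy you sketch has genuine gaps beyond the one you flag. Your $q=0$ step assumes the equivariant Hikita conjecture, which is itself open in general, so this is circular as a reduction. Your rank-matching step via Corollary~\ref{mainresultrank} requires the maps $\rho_{x,y}$ to be nonzero, which fails already for $\operatorname{Hilb}_n(\C^2)$. Most seriously, the ``lifting'' step---matching annihilators by identifying each $L_\alpha$ with a convolution operator extracted from $\scrA$ and equating graded traces with stable-envelope solutions---is speculative: no such identification is established anywhere, and a $q=0$ match together with a generic rank count does not force two cyclic $\Rreg$-modules to be isomorphic without further control on the connection.

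More to the point, your description of the two proven cases does not match what the paper actually does. In the hypertoric case there is no $q=0$/rank/lifting argument: the paper computes both annihilator ideals explicitly, showing $J$ is generated by $r(\lambda)=[a]^\lambda(1-q^\lambda)$ for $\lambda\in\Sigma_+$ (Proposition~\ref{JJ'}, which needs the combinatorics of cancellation-free cocircuit decompositions in Lemma~\ref{cancellation free}) and that the annihilator of $1$ in $\Qreg^!$ is generated by $s(\alpha)=[u]^\alpha(1-q^\alpha)$ for $\alpha\in\Delta_+^!$ (Proposition~\ref{Ireg}). These generators match under Gale duality, and that is the whole proof. In the Springer case the paper does not use stable envelopes or a direct $L_\alpha$ identification; it passes to $W$-invariants (Section~\ref{sec:weyl}), shows that both $\Mreg^W$ and $(\Qreg^!)^W$ are free of rank one over the smaller ring $\Dreg$ (Theorems~\ref{ComputationMreg} and~\ref{ComputationQreg}), and then matches the residual $(\Sym\ft)^W$-action on both sides via the geometric Harish-Chandra map and the Cherednik--Matsuo correspondence between the AKZ connection and the trigonometric Calogero--Moser system. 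The quantum-connection input is Braverman--Maulik--Okounkov \cite{BMO}, not Bezrukavnikov--Okounkov. The key mechanism in the Springer proof---that conjugation by the Weyl denominator $\delta$ and the $\rho$-shift intertwine the two $(\Sym\ft)^W$-actions (Proposition~\ref{co:CentreAction} and Theorem~\ref{ComputationQreg})---has no counterpart in your outline.
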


We will prove that this conjecture holds for hypertoric varieties (Theorem \ref{hypertoric-pi}) and 
for the Springer resolution (Theorem \ref{Springer-pi}).

\begin{remark}
Choose $ c \in H^2(\tXreg; \C) \cong \ft^! $ and choose a splitting for the quantization exact sequence \eqref{eq:quant-exact} for $X$,
which is isomorphic by assumption to the cohomology exact sequence \eqref{eq:coh-exact} for $X^!$.  As in Remarks \ref{diff-ops}
and \ref{quantum D-module}, we obtain an isomorphism between $R^c_{T_\reg} \cong (E_{K_\reg}^{!})^c $ and the Rees algebra of differential operators on $ \Treg \cong \cohtorreg^!$.  Thus Conjecture \ref{pi} becomes an isomorphism between modules over this ring of differential operators.
\end{remark}

\begin{remark}\label{Nak-Hik}
Proposition \ref{M0}, Remark \ref{Q0}, and Conjecture \ref{pi} together imply that we have an isomorphism 
$$\HH_0(B(\scrA))\cong M_0 \cong Q_0^! \cong H^*_{T^!\times\cs}(\tX^!; \C)$$
of graded modules over $\Sym\Azo \cong \Sym H^2_{T^!\times\cs}(\tX^!; \C)$.
If we further assume that $B(\scrA)$ is commutative, which is true in all known examples, then we obtain an isomorphism
$B(\scrA)\cong H^*_{T^!\times\cs}(\tX^!; \C)$.
This is closely related to Nakajima's {\bf equivariant Hikita conjecture} \cite[Conjecture 8.9]{KTWWY},
which states that $B(\scrA)$ and $H^*_{T^!\times\cs}(\tX^!; \C)$ are isomorphic as graded rings.
In cases where $H^*_{T^!\times\cs}(\tX^!; \C)$ is generated in degree 2, such as the hypertoric and Springer cases considered
in this paper, the two statements are in fact equivalent.
\end{remark}

\begin{remark}
We know that $\Qreg^! = \Freg^! \otimes H^*_{T^!\times\cs}(\tX^!; \C) $ is finitely generated over the ring
$\Freg^!\otimes H^*_{T^!\times\cs}(pt; \C)$, which is isomorphic to $ \Sreg \otimes \Sym H_2(\tXreg; \C) \otimes \C[\hbar]$.
Thus Conjecture \ref{pi} would imply that $\Mreg$ is finitely generated over the same ring.
Assuming the equivariant Hikita conjecture $M_0 \cong Q_0^!$ (Remark \ref{Nak-Hik}), we know that
$ M_0$ is finitely generated over $  H^*_{T^! \times \cs}(pt; \C) \cong \Sym H_2(\tXreg; \C) \otimes \C[\hbar] $.
Proposition \ref{finitely generated} then implies that $\Mreg $ is indeed finitely generated over $ \Sreg \otimes \Sym H_2(\tXreg; \C) \otimes \C[\hbar]   $.
Thus Proposition \ref{finitely generated} may be regarded as supporting evidence for Conjecture \ref{pi}.
\end{remark}

\subsection{Weyl groups}\label{sec:weyl}
The {\bf Namikawa Weyl group} \cite{NamiaffII} of $X$ is a finite group equipped with a faithful action on
$ H_2(\tXreg; \C) $.  As part of the package of symplectic duality, $ W $ is identified with the Weyl group of the reductive group $ \Aut(X^!) $
in a manner compatible with the actions on $ H_2(\tXreg; \C) \cong (\ft^!)^*. $
These actions
extend to actions on $ \Azo $ and $ H^2_{T^! \times \cs}(\tX^!; \C) $ and then to the rings $ \Rreg $ and $ \Ereg^! $ (acting trivially on $ \Sreg $ and $ \Freg$).
Moreover, $W$ acts compatibly on the modules $ \Mreg $ and $ \Qreg^! $, 
and it is natural to expect the isomorphism of Conjecture \ref{pi} to be $W$-equivariant.
In particular, this would imply that the $W$-invariant parts are isomorphic, as we conjecture below.

\begin{conjecture}\label{piW}
There is an isomorphism $\Mreg^W\cong (\Qreg^!)^W$ of graded modules over $\Rreg^W\cong(\Ereg^!)^W$
taking $1\in\Mreg^W$ to $1\in(\Qreg^!)^W$.
\end{conjecture}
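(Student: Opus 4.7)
The plan is to derive Conjecture \ref{piW} as a formal consequence of Conjecture \ref{pi} once the latter is upgraded to a $W$-equivariant isomorphism. The first step is to verify that the ring isomorphism $\Rreg \cong \Ereg^!$ supplied by the symplectic-duality framework of Section \ref{sec:sd} is itself $W$-equivariant. This should hold essentially by construction: the Namikawa Weyl group acts trivially on $\Sreg \cong \Freg^!$ (the K\"ahler/equivariant parameters are fixed), and it acts compatibly on $\Azo \cong H^2_{T^!\times\cs}(\tX^!; \C)$ via the identification built into the definition of symplectic duality, through which $W$ is simultaneously the Namikawa Weyl group of $X$ and the Weyl group of $\Aut(X^!)$.

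The second and main step is to show that the isomorphism $\Mreg \cong \Qreg^!$ predicted by Conjecture \ref{pi} can be chosen to be $W$-equivariant. In cases where the conjecture is proved explicitly, such as the hypertoric case (where $W$ is trivial, so there is nothing to check) and the Springer case (Theorems \ref{hypertoric-pi} and \ref{Springer-pi}), this should be verifiable by inspecting the particular isomorphism constructed. Alternatively, one could attempt an averaging argument: given any $\Rreg$-linear isomorphism $\varphi : \Mreg \to \Qreg^!$ sending $1\mapsto 1$, the averaged map $\tilde\varphi := \frac{1}{|W|}\sum_{w\in W} w\varphi w^{-1}$ is still $\Rreg$-linear and still sends $1\mapsto 1$; the issue is to show that $\tilde\varphi$ remains an isomorphism, which would follow, for instance, from a rigidity property forcing $\varphi$ to be $W$-equivariant up to a scalar on each isotypic component.

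Once $W$-equivariance of the main isomorphism is in hand, the passage to invariants is painless. Because $W$ is a finite group and everything is over a field of characteristic zero, the functor $(-)^W$ is exact, so it turns the $W$-equivariant isomorphism of Conjecture \ref{pi} into the desired isomorphism $\Mreg^W \cong (\Qreg^!)^W$ of graded modules over $\Rreg^W \cong (\Ereg^!)^W$. The distinguished elements $1$ on both sides are manifestly $W$-fixed, so the matching of units descends automatically.

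The main obstacle is the $W$-equivariance step in the Springer case, where $W$ is the common Weyl group of $\fg$ and $\fg^!$ and acts nontrivially on both sides. On the algebraic side, $W$ acts on $\Mreg$ through its action on the universal deformation $\scrX \cong \fg^* \times_{\ft^*/W} \ft^*$ of Example \ref{Springer example}, i.e.\ by permuting the lifts of the deformation parameter in $\ft^* \cong H_2(\tXreg; \C)$. On the geometric side, $W$ acts on $H^*_{T^!\times\cs}(\tX^!; \C)$ via its standard action on the cohomology of the Langlands-dual Springer resolution. Matching these two a priori distinct-looking actions through the explicit identification of Theorem \ref{Springer-pi} is the essential technical point; it should ultimately reduce to the well-known coincidence of Weyl groups under Langlands duality, together with the $W$-equivariance properties of the quantum connection for cotangent bundles of flag varieties.
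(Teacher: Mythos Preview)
Your strategy runs in the opposite logical direction from the paper. You want to deduce Conjecture~\ref{piW} from Conjecture~\ref{pi}, once the latter is upgraded to a $W$-equivariant isomorphism; the paper does mention this implication as motivation in the sentence preceding the conjecture. But the substantive content of Section~\ref{sec:weyl} establishes the \emph{reverse} implication: the extension-of-scalars formulas \eqref{eq:MfromMreg} and \eqref{eq:QfromQreg} show that $\Mreg$ and $\Qreg^!$ are recovered from their $W$-invariants by tensoring up along $(\Sym H_2(\tXreg;\C))^W \to \Sym H_2(\tXreg;\C)$, so an isomorphism of the invariant pieces automatically yields one of the full modules. In the paper's architecture, Conjecture~\ref{piW} is therefore the primary statement to verify and Conjecture~\ref{pi} is its corollary, not the other way around.

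This matters concretely in the Springer case, the only example treated with nontrivial $W$. Theorem~\ref{Springer-pi} proves Conjecture~\ref{piW} \emph{directly}, by computing both $\Mreg^W$ and $(\Qreg^!)^W$ as cyclic $\Dreg$-modules and matching the action of $(\Sym\ft)^W$ on each (Theorems~\ref{ComputationMreg} and \ref{ComputationQreg}, Proposition~\ref{co:CentreAction}); only then is Conjecture~\ref{pi} deduced, via the discussion in Section~\ref{sec:weyl}. Your proposed route would be circular here, since the paper's only proof of Conjecture~\ref{pi} for the Springer resolution already passes through Conjecture~\ref{piW}. Your averaging argument is also genuinely incomplete, as you acknowledge: nothing guarantees that the symmetrized map $\tilde\varphi$ remains invertible, and the paper never supplies such a rigidity input because it never attempts this direction.
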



Let us examine the objects appearing in Conjecture \ref{piW} for future use.  On the algebraic side, we have
$ \scrA \cong \scrA^W \otimes_{(\Sym H_2(\tXreg; \C))^W} \Sym H_2(\tXreg; \C) $
\cite[Proposition 3.5]{Losev-orbit} and therefore
$$ J \cong J(\scrA^W) \otimes_{(\Sym H_2(\tXreg; \C))^W} \Sym H_2(\tXreg; \C), $$
where
$$ J(\scrA^W) =  \sum_{\la\in\N\Sigma_+}S[\hbar]\cdot\left\{ 1\otimes a b - q^\lambda \otimes b a \mid a \in \scrA^W_\lambda, b \in \scrA^W_{-\lambda}\right\}.$$
This implies that
$$ \Mreg^W = (\Sreg \otimes \scrA_0 / J)^W \cong \Sreg \otimes \scrA_0^W / J(\scrA^W).$$
In other words, $ \Mreg^W $ is obtained by applying our construction of the module $ \Mreg $ to the invariant algebra $ \scrA^W $. On the other hand, since $ \scrA $ is obtained from $ \scrA^W $ by extension of scalars, we have
\begin{equation} \label{eq:MfromMreg} \Mreg \cong \Mreg^W \otimes_{(\Sym H_2(\tXreg; \C))^W} \Sym H_2(\tXreg; \C)
\end{equation}
as $ \Rreg$-modules.

On the geometric side, we have
$$
 (\Qreg^!)^W \cong \Freg \otimes H^*_{T^! \times \cs}(\tX^!; \C)^W \cong \Freg \otimes H^*_{\Aut(\tX^!) \times \cs}(\tX^!; \C).
$$
Moreover, we have an isomorphism $$H^*_{T^! \times \cs}(\tX^!; \C) \cong  H^*_{\Aut(\tX^!) \times \cs}(\tX^!; \C) \otimes_{H^*_{\Aut(\tX^!)}(pt)} H^*_{T^!}(pt).$$ This isomorphism is compatible with quantum multiplication by divisors, thus 
\begin{equation} \label{eq:QfromQreg}
\Qreg^! \cong (\Qreg^!)^W\otimes_{H^*_{\Aut(\tX^!)}(pt)} H^*_{T^!}(pt) 
\end{equation}
as $\Ereg^!$-modules.
Comparing (\ref{eq:MfromMreg}) and (\ref{eq:QfromQreg}) shows that Conjecture \ref{piW} implies Conjecture \ref{pi}.

\subsection{Beyond the regular locus} \label{thegreatbeyond}
In Section \ref{sec:algebra}, we defined a module $M$ over $R$, and then localized to obtain a module $\Mreg$ over $\Rreg$.
In Section \ref{sec:geometry}, however, we did not define a module $Q$ over $E$ that localizes to the module $\Qreg$ over $\Ereg$.
To this end, we now define $Q$ to be the $E$-submodule of $\Qreg$ generated by the vector subspace
$$1 \otimes H^*_{T \times \cs}(\tX; \C) \subset \Freg \otimes H^*_{T \times \cs}(\tX; \C) = \Qreg.$$
By definition, $Q$ is a subspace of $\Qreg$.  Note that the situation with $M$ and $\Mreg$ is less clear; there is a natural
map from $M$ to $\Mreg$, but this map could {\em a priori} fail to be injective if $M$ has nontrivial $S$-torsion.
Nonetheless, for any particular symplectic dual pair, it is natural to ask the following question.

\begin{question}\label{MQ}
Do we have an isomorphism $M\cong Q^!$ of graded modules over $R\cong E^!$?
\end{question}

In the hypertoric case, Question \ref{MQ} is equivalent to the question of whether or not $M$ is torsion-free as a module over $S$.
We believe that the answer is yes, and this problem will be addressed in a future work.  In the case of the Springer resolution,
we believe that the answer is yes in type A and no in other types.  It is interesting to note that the Springer resolution
can be realized as a quiver variety
only in type A, so it is possible that the answer to Question \ref{MQ} is yes for all
dual pairs arising from the Higgs/Coulomb construction associated with a linear representation of a group.

\begin{remark}\label{whoa}
In a case where the answer to Question \ref{MQ} is yes, we obtain an isomorphism $(Q^!_1)^{\hbar =1} \cong M_1^{\hbar = 1}\cong \HH_0(\scrA^{\hbar = 1})$ by Proposition \ref{M1}.
The second and third authors have conjectured that
$(Q^!_1)^{\hbar =1} \cong I\! H_{T^!\times\cs}^*(X^!; \C)^{\hbar=1}$ \cite[Conjecture 2.5]{McBP}.
On the other hand, the third author has conjectured that
$\HH_0(\scrA^{\hbar = 1})$ is isomorphic to
$I\! H_{T^!\times\cs}^*(X^!; \C)^{\hbar=1}$ \cite[Conjecture 3.6]{Pro12}.
Thus a positive answer to Question \ref{MQ} would make these two conjectures equivalent to each other.
\end{remark}


\section{The hypertoric case}\label{sec:hypertoric}
The purpose of this section is to prove Conjecture \ref{pi} for hypertoric varieties.  
We begin with a review of quantization, quantum cohomology, and symplectic duality in the hypertoric setting.

\subsection{Hypertoric varieties}\label{sec:basics}
Fix an algebraic torus $T$, a positive integer $n$, and an $n$-tuple $\gamma\in\left(\ft_\Z\right)^n$ satisfying the following conditions:
\begin{itemize}
\item[(1)] For all $i$, $\gamma_i$ is nonzero, primitive, and contained in the span of $\{\gamma_j\mid j\neq i\}$.
\item[(2)] The semigroup $\mathbb{N}\{\gamma_1,\ldots,\gamma_n\}$ spanned by $\gamma$ is equal to $\ft_\Z$.
\item[(3)] For any subset $S\subset [n]$, if $\{\gamma_i\mid i\in S\}$ is a $\C$-basis for $\ft$, then it is also a $\Z$-basis for $\ft_\Z$.
\end{itemize}
These cocharacters define a surjective map from $\cs^n$ to $T$, and we let $K$ denote the kernel of this map.
Consider the action of $\cs^n$ on $\C^n$ by coordinate-wise scalar multiplication.  This induces a symplectic action
on $T^*\C^n$, and the subtorus $K\subset\cs^n$
acts with moment map $\mu:T^*\C^n\to\fk^*$, where $\mu(q,p)$ is the projection of $(q_1p_1,\ldots,q_np_n)\in \C^n \cong \Lie\left(\cs^n\right)^*$
to $\fk^*$.
Let $$X := \mu^{-1}(0)/\!\!/_{\! 0}\, K = \Spec \cO\!\left(\mu^{-1}(0)\right)^K$$ denote the
the affine symplectic quotient of $T^*\C^n$ by $K$.
The inverse scaling action of $\cs$ on $T^*\C^n$
induces an action on $X$ for which $\mathcal{O}(X)$ is non-negatively graded with only constants in degree 0
and the natural Poisson bracket has weight -2.
The fact that the degree 1 part of $\mathcal{O}(X)$ is trivial follows from the last part of item (1).
Fix a character $\theta\in\fk^*_\Z$ and consider the GIT quotient 
$$\tX := \mu^{-1}(0)/\!\!/_{\!\theta}\, K = \Proj \left(\cO\!\left(\mu^{-1}(0)\right)\otimes\C[t]\right)^K,$$
where $\Proj$ is taken with respect to the grading given by powers of $t$ and $K$ acts on $t$ with weight $\theta$.
We will assume that $\theta$ is chosen generically; this, along with item (3), implies
that $\tX$ is smooth, and therefore a symplectic resolution of $X$
\cite[Theorem 3.2]{BD}.  
The symplectic action of $\cs^n$ on $T^*\C^n$ descends to an action of $T$ on $X$ and $\tX$ commuting with the conical action of $\cs$. This realizes $T$ as a maximal torus of $\Aut(X)$.
We have canonical isomorphisms
$$\scrX \cong T^*\C^n/\!\!/_{\!0}\, K = \Spec \cO(T^*\C^n)^K
\and
\tilde\scrX \cong T^*\C^n/\!\!/_{\!\theta}\, K.$$

\subsection{The hypertoric enveloping algebra}\label{sec:hea}
Let $D$ be the $\C[\hbar]$-algebra generated
by degree 1 elements $z_1,\ldots,z_n$ and $w_1,\ldots,w_n$ with $[z_i,z_j] = 0 = [w_i,w_j]$
and $[w_i, z_j] = \delta_{ij}\hbar$.  Then the {\bf hypertoric enveloping algebra}
$\scrA := D^K$ is the canonical quantization of $\scrX$.
Let $a_i := z_iw_i$, so that $\scrA_0 = \C[a_1,\ldots,a_n,\hbar]$.
Note that, in the hypertoric case, the canonical ring homomorphism $\Sym\Azo\to\scrA_0$ is an isomorphism.

For all $\la\in\ft^*_\Z$, let $\la_i := \langle\la,\gamma_i\rangle\in\Z$.
The map $\ft^*_\Z\hookrightarrow\Z^n$ taking $\la$ to $(\la_1,\ldots,\la_n)$ coincides with
the inclusion of character lattices induced by the surjection from $\cs^n$ to $T$.
For all $\la$, there is a unique expression of the form $\la = \la_+-\la_-$ where $\la_+,\la_-\in\mathbb{N}^n$ have disjoint support.
Note that $\la_+$ and $\la_-$ will typically not live in the sublattice $\ft^*_\Z\subset\Z^n$.
For all $\la\in\ft^*_\Z$, let $m^\la := z^{\la_+}w^{\la_-}\in\scrA_\la$.
Then we have $\scrA_\la = \scrA_0 m^\la$.

For any $k\in\Z$, let \begin{equation}\label{aik}[a_i]^k := \begin{cases} \text{1 if $k=0$}\\ \text{$a_i(a_i-\hbar)\cdots(a_i-(k-1)\hbar)$ if $k> 0$}\\
\text{$(a_i+\hbar)(a_i+2\hbar)\cdots(a_i-k\hbar)$ if $k< 0$.}\end{cases}\end{equation}
Let $\la_i := \langle\la,\gamma_i\rangle$ be the $i^\text{th}$ coordinate of $\la\in\Z^n$.  Then we have the following description of the product
structure of $\scrA$ \cite[Section 3.2]{JustinThesis}:
\begin{equation}\label{hypertoric product}
[a_i, m^\la] = \la_i \hbar m^\la\and
m^\la m^\mu = \left(\prod_{\substack{\la_i\mu_i<0\\ |\la_i|\leq |\mu_i|}} [a_i]^{\la_i}\right) m^{\la+\mu}
\left(\prod_{\substack{\la_i\mu_i<0\\ |\la_i|> |\mu_i|}} [a_i]^{-\mu_i}\right).
\end{equation}

\subsection{Cocircuits and equivariant roots}
A nonzero primitive element of $\ft^*_\Z$ with minimal support is called a {\bf cocircuit}.
Condition (3) in Section \ref{sec:basics}
implies that, for any cocircuit $\la$, we have $\la_i\in\{-1,0,1\}$ for all $i$.

\begin{lemma}\label{basis}
For all $\la\in\ft^*_\Z$, there exist cocircuits $\mu^1,\ldots,\mu^m$, each supported on a subset of $\supp(\la)$, such that
\begin{equation}\label{sum}
\la = \mu^1 + \cdots + \mu^m.\end{equation}
\end{lemma}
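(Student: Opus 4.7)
The plan is to proceed by induction on $\sum_i |\la_i|$, with the trivial case $\la=0$ (the empty sum) as base. For the inductive step, it suffices to produce a single cocircuit $\mu\in\ft^*_\Z$ satisfying $\supp(\mu)\subseteq\supp(\la)$ together with the sign compatibility $\mu_i\la_i\geq 0$ for every $i$. Since $\mu$ is a cocircuit, item (3) of Section~\ref{sec:basics} forces $\mu_i\in\{-1,0,1\}$, and the sign compatibility then yields $|\la_i-\mu_i|=|\la_i|-|\mu_i|$ for every $i$. Hence $\sum_i|\la_i-\mu_i|<\sum_i|\la_i|$ and $\supp(\la-\mu)\subseteq\supp(\la)$, so the inductive hypothesis applied to $\la-\mu$ closes out the decomposition.

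To produce such a cocircuit, I would consider the rational polyhedral cone
\[ C := \bigl\{\, v \in \ft^*_\R \;\bigm|\; v_i = 0 \text{ for } i\notin\supp(\la),\ v_i\la_i\geq 0 \text{ for all } i \,\bigr\}. \]
The vector $\la$ lies in the relative interior of $C$, and $C$ is pointed: if both $\pm v\in C$ then $v_i\la_i=0$ for every $i$, forcing $v=0$. Hence $C$ has at least one extreme ray, and I take $\mu\in\ft^*_\Z$ to be a primitive lattice generator of one. Automatically $\supp(\mu)\subseteq\supp(\la)$ and $\mu_i\la_i\geq 0$ for every $i$.

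The main task, and the main subtlety, is to verify that this $\mu$ is actually a cocircuit, i.e., that $\supp(\mu)$ is minimal among supports of nonzero elements of $\ft^*_\Z$. Suppose for contradiction that there were a nonzero $\nu\in\ft^*_\Z$ with $\supp(\nu)\subsetneq\supp(\mu)$. Since $\mu_i\la_i>0$ on $\supp(\mu)$ and $\nu$ vanishes outside $\supp(\mu)\subseteq\supp(\la)$, one has $\mu\pm\epsilon\nu\in C$ for all sufficiently small $\epsilon>0$. This expresses $\mu$ as the midpoint of two distinct points of $C$ (note $\nu$ cannot be a scalar multiple of $\mu$, as their supports differ), contradicting extremality of the ray spanned by $\mu$. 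Item (3) is also implicitly in use here, to guarantee that $\ft^*_\Z$ meets each extreme ray of $C$ in a rank-one sublattice admitting a primitive generator; the rest of the argument is a standard convex-geometric reduction.
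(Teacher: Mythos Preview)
Your proof is correct, but it takes a genuinely different route from the paper's. The paper argues by induction on $|\supp(\la)|$: it picks \emph{any} cocircuit $\mu$ with $\supp(\mu)\subseteq\supp(\la)$ (which exists simply because $\la$ itself is a nonzero element with that support, so one of minimal support will do), and subtracts the integer multiple $(\la_i/\mu_i)\mu$ to kill the $i$th coordinate and strictly shrink the support. This is shorter and requires no convexity.

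Your extreme-ray argument is more elaborate but buys you more: because each $\mu$ you peel off is sign-compatible with $\la$, and $\la-\mu$ retains the sign pattern of $\la$, every cocircuit produced in your induction is sign-compatible with $\la$ and hence with all the others. In other words, you have essentially proved Lemma~\ref{cancellation free} (the cancellation-free decomposition) in one stroke, whereas the paper proves Lemma~\ref{basis} first and then runs a separate, more intricate double induction to upgrade to cancellation-freeness. One small expository point: the role of item~(3) from Section~\ref{sec:basics} is to guarantee $\mu_i\in\{-1,0,1\}$ for cocircuits (which you use in your first paragraph), not to ensure that extreme rays of the rational cone $C$ meet $\ft^*_\Z$ in a primitive vector---that part is automatic from rationality of $C$.
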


\begin{proof}
We will proceed by induction on the support of $\la$.  If $\la=0$, we are done.
If not, choose a cocircuit $\mu$ such that $\supp(\mu)\subset \supp(\la)$, and choose an element $i\in\supp(\mu)$.
Then $\la - (\la_i/\mu_i) \mu$ has support contained in $\supp(\la)\smallsetminus\{i\}$.  Since $\mu$ is a cocircuit, $\mu_i = \pm 1$,
so $\la_i/\mu_i$ is an integer.
\end{proof}

We call Equation \eqref{sum} {\bf cancellation free} if $\mu^k_i\mu^l_i\geq 0$ for all $i,k,l$.

\begin{lemma}\label{cancellation free}
For any $\la\in\ft^*_\Z$ we may choose $\mu^1,\ldots,\mu^m$ as in Lemma \ref{basis} such that
Equation \eqref{sum} is cancellation free.
\end{lemma}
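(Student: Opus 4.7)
The plan is to induct on the $\ell^1$-norm $\sum_i |\lambda_i|$, reducing the lemma to the following \emph{conformal cocircuit claim}: for every nonzero $\lambda \in \ft^*_\Z$, there exists a cocircuit $\mu$ with $\supp(\mu) \subseteq \supp(\lambda)$ and $\mu_i \lambda_i \geq 0$ for all $i$. Granting such a $\mu$, the entries $\mu_i \in \{-1,0,1\}$ agree in sign with $\lambda_i$ on $\supp(\mu)$, so $\lambda - \mu$ still lies in $\ft^*_\Z$ with support contained in $\supp(\lambda)$, each coordinate retaining the sign of $\lambda_i$ (or becoming zero), and $\ell^1$-norm smaller by $|\supp(\mu)| \geq 2$ (using that every cocircuit has support of size at least two, a consequence of condition (1)). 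The inductive hypothesis applied to $\lambda - \mu$ then produces a cancellation-free decomposition $\mu^2 + \cdots + \mu^m$, and setting $\mu^1 := \mu$ yields a cancellation-free decomposition of $\lambda$, since all summands inherit a sign pattern dominated by that of $\lambda$ and are therefore mutually conformal.

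To produce the conformal cocircuit, I consider the rational pointed polyhedral cone
\[
C := \{v \in \ft^*_\R : v_i = 0 \text{ for } i \notin \supp(\lambda),\ v_i \lambda_i \geq 0 \text{ for all } i\},
\]
which contains $\lambda$ and hence meets $\ft^*_\Z$ nontrivially. Choose $\mu \in (C \cap \ft^*_\Z) \setminus \{0\}$ of minimal support cardinality, passing to its primitive representative if necessary. I claim this $\mu$ is a cocircuit. If not, there exists a nonzero $\nu \in \ft^*_\Z$ with $\supp(\nu) \subsetneq \supp(\mu)$. If $\nu$ or $-\nu$ already lies in $C$, this contradicts the minimality of $\mu$. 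Otherwise, $\nu$ has some coordinate opposing $\mu$ in sign and another agreeing with it; setting $p/q := \min\{\mu_k/(-\nu_k) : \mu_k \nu_k < 0\}$ in lowest terms as positive coprime integers, the combination $q\mu + p\nu$ lies in $C \cap \ft^*_\Z$, is nonzero (since $\nu$ is not a scalar multiple of $\mu$), and annihilates the coordinate achieving the minimum---producing a strictly smaller-support element of $C \cap \ft^*_\Z$ and contradicting the minimality of $\mu$.

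I expect the main technical step to be the verification that $q\mu + p\nu$ actually remains in $C$: at indices where $\nu_k$ opposes $\mu_k$ in sign, the bound $p/q \leq \mu_k/(-\nu_k)$ forces $q\mu_k + p\nu_k$ to keep the sign of $\mu_k = \operatorname{sign}(\lambda_k)$, while at indices where $\nu_k$ agrees with $\mu_k$ or vanishes the two terms push the sum in the same direction. This is a routine linear-programming check; essentially it is the elementary avatar of the general fact that the extreme rays of $C$ are spanned by cocircuits conformal to $\lambda$. Once this check is in place, the conformal cocircuit claim is established and the inductive argument closes the proof.
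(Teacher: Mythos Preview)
Your proof is correct and takes a genuinely different route from the paper's. The paper proceeds by a double induction: it starts from an \emph{arbitrary} cocircuit decomposition of $\lambda$ (via Lemma~\ref{basis}), then repairs cancellations coordinate-by-coordinate. At a coordinate $i$ where two cocircuits $\mu^k,\mu^l$ have opposite signs, it replaces the pair by a cancellation-free decomposition of $\mu^k+\mu^l$, which exists by the outer induction on $|\supp(\lambda)|$; an inner induction tracks that the degree of cancellation decreases and no new cancellation appears in earlier coordinates. Your argument, by contrast, is constructive from the start: you peel off one cocircuit $\mu$ conformal to $\lambda$ and recurse on $\lambda-\mu$ by induction on the $\ell^1$-norm. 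The heart of your proof is the existence of such a conformal cocircuit, which you establish by a clean minimal-support argument in the sign cone $C$; this is essentially the classical conformal circuit decomposition from oriented matroid theory, specialized to the totally unimodular setting guaranteed by condition~(3).

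Each approach has its merits. The paper's method stays closer to the raw combinatorics and avoids introducing the cone $C$, but pays for this with a somewhat delicate bookkeeping argument (the check that no new cancellation is created in earlier coordinates). Your method is more conceptual and yields a stronger intermediate statement---every nonzero $\lambda$ dominates a conformal cocircuit---which makes the induction essentially automatic. One small note: your observation that $|\supp(\mu)|\geq 2$ is correct but not needed; $|\supp(\mu)|\geq 1$ already suffices to drop the $\ell^1$-norm strictly.
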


\begin{proof}
We will again proceed by induction on the support of $\la$.  If the support of $\la$ is minimal, then $\la$ is a multiple of a cocircuit, and we are done.
Otherwise, choose cocircuits $\mu^1,\ldots,\mu^m$ such that $\la = \mu^1+\cdots+\mu^m$ and $\supp(\mu^k)\subset\supp(\la)$
for all $k$, which we can do by Lemma \ref{basis}.
Since each $\mu^k$ is a cocircuit, we have $\mu^k_i\mu^l_i\in\{-1,0,1\}$ for all $i,k,l$.

Let $i$ be the first coordinate such that there exist $k,l$ with $\mu^k_i\mu^l_i = -1$.
Let $d$ be the minimum of $|\{k\mid \mu^k_i = 1\}|$ and $|\{k\mid \mu^k_i = -1\}|$;
we call this the {\bf degree of cancellation} in the $i^\text{th}$ coordinate.
We will produce a new expression for $\la$ that has a degree of cancellation of $d-1$ in the $i^\text{th}$ coordinate
and still has no cancellation in the $j^\text{th}$ coordinate for $j<i$.  By a second induction, this time on the index $i$, this will imply that we can obtain
a cancellation free expression for $\la$.

Choose $k$ and $l$ such that $\mu_i^k\mu_i^l=-1$.  This means that $\mu_i^k+\mu_i^l=0$, so we have
$\supp(\mu^k+\mu^l)\subset\supp(\la)\smallsetminus\{i\}$.  By our (first) inductive hypothesis, there exist cocircuits
$\nu^1,\ldots,\nu^s$ such that $\supp(\nu^t)\subset\supp(\mu^k+\mu^l)$ for all $1\leq t\leq s$, 
$\mu^k+\mu^l = \nu^1+\cdots+\nu^s$, and this sum is cancellation free.
Then we have $$\la = \mu^1 + \cdots + \widehat{\mu^k} + \cdots +\widehat{\mu^l} + \cdots + \mu^m + \nu^1 + \cdots + \nu^s,$$
with the support of each of the cocircuits on the right-hand side contained in the support of $\la$, and with a degree of cancellation
of $d-1$ in the $i^\text{th}$ coordinate.  Thus it remains only to show that this expression has no cancellation in the $j^\text{th}$
coordinate when $j<i$.

Assume that there is cancellation in the $j^\text{th}$ coordinate for some $j<i$; this means that we have indices $q$ and $p$ such
that $\mu^q_j\nu^p_j = -1$.  Assume further that $\mu^q_j = 1$ and $\nu^p_j = -1$ (the opposite case is identical).
Since $\mu^q_j = 1$ and the sum $\mu^1+\cdots+\mu^m$ has no cancellation in the $j^\text{th}$ coordinate,
we have $\mu^k_j,\mu^l_j\in\{0,1\}$, and in particular the $j^\text{th}$ coordinate of $\mu^k+\mu^l = \nu^1+\cdots+\nu^s$
is non-negative.  Since the sum on the right is cancellation-free, this implies that $\nu^p_j\in\{0,1\}$, which contradicts
our assumption.
\end{proof}

\begin{proposition}\label{toric eq roots}
The equivariant roots of $X$ are precisely the cocircuits.
\end{proposition}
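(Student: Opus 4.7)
My plan is to unpack the definition of $\Sigma$ (the set of nonzero $T$-weights in $\scrA^+/(\scrA^+\cdot\scrA^+)$) using the description $\scrA_\la = \scrA_0 \cdot m^\la$ from Section~\ref{sec:hea}. For $\la \neq 0$, every element of $\scrA_\la$ has $\N$-degree at least $\|\la\|_1 := |\la_+|+|\la_-| \geq 1$, so $\scrA_\la \subset \scrA^+$, and $\la \in \Sigma$ if and only if $m^\la \notin \scrA^+\cdot\scrA^+$.

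For the direction that non-cocircuits are excluded, I will apply Lemma~\ref{cancellation free} to write a non-cocircuit $\la \neq 0$ as a cancellation-free sum $\la = \mu^1 + \cdots + \mu^m$ of cocircuits; here $m \geq 2$, since otherwise $\la = \mu^1$ would itself be a cocircuit. The cancellation-free condition makes every prefactor in the product formula~\eqref{hypertoric product} empty, and a short induction then yields $m^{\mu^1}\cdots m^{\mu^m} = m^\la$, exhibiting $m^\la$ (and hence all of $\scrA_\la = \scrA_0 \cdot m^\la$) as an element of $\scrA^+\cdot\scrA^+$.

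The harder direction is showing that every cocircuit lies in $\Sigma$. I will suppose for contradiction that $m^\la = \sum_j b_j c_j$ with homogeneous $b_j \in \scrA^{p_j}_{\mu_j}$ and $c_j \in \scrA^{q_j}_{\la - \mu_j}$, both in $\scrA^+$, and extract information by passing to the minimum-$\N$-degree piece. Because $\scrA_\mu = \scrA_0 \cdot m^\mu$ and $\scrA_0 = \C[a_1,\ldots,a_n,\hbar]$ is generated in $\N$-degree $2$, the minimum $\N$-degree of a nonzero element of $\scrA_\mu$ is $\|\mu\|_1$, with minimum-degree piece $\C\cdot m^\mu$. Combining $p_j \geq \|\mu_j\|_1$, $q_j \geq \|\la - \mu_j\|_1$ with the triangle inequality $\|\mu_j\|_1 + \|\la - \mu_j\|_1 \geq \|\la\|_1$ will force equality throughout, making $\la = \mu_j + (\la - \mu_j)$ cancellation-free and pinning down $b_j \in \C m^{\mu_j}$, $c_j \in \C m^{\la - \mu_j}$.

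The final step uses minimality of $\supp(\la)$ to rule out any such $\mu_j$. I will show that the sublattice $L := \{\mu \in \ft^*_\Z \mid \supp(\mu) \subseteq \supp(\la)\}$ has rank one: two $\Q$-linearly independent elements of $L$ would admit a nonzero $\Q$-combination of strictly smaller support, contradicting minimality after clearing denominators. Primitivity of $\la$ then gives $L = \Z\la$, so $\mu_j = k_j \la$ for some $k_j \in \Z$, and cancellation-freeness $k_j(1-k_j)\la_i^2 \geq 0$ at $i \in \supp(\la)$ forces $k_j \in \{0,1\}$; but either value would put $b_j$ or $c_j$ in $\scrA_0 \cap \scrA^+$, whose minimum $\N$-degree is $2$, contradicting $p_j = \|\mu_j\|_1 = 0$ (or the symmetric statement). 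Thus the minimum-degree part of $\sum_j b_j c_j$ vanishes, contradicting its equality with $m^\la$, so $\la \in \Sigma$. The main obstacle is this cocircuit direction; the two key ingredients are the explicit minimum-degree control via the $m^\la$ basis of the weight spaces, and the rank-one lattice observation drawn from the minimality of $\supp(\la)$.
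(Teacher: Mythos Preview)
Your argument is correct and follows the same overall strategy as the paper: use Lemma~\ref{cancellation free} (cancellation-free decomposition) to show that every non-cocircuit weight falls into $(\scrA^+)^2$, and argue separately that cocircuits do not. The paper's proof of the non-cocircuit direction is literally the $m=2$ version of your factorization $m^{\mu^1}\cdots m^{\mu^m}=m^\la$ via Equation~\eqref{hypertoric product}, so there is no real difference there.

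For the cocircuit direction the paper simply asserts that $m^\la\in\scrA^+\smallsetminus(\scrA^+)^2$ is ``clear'', whereas you supply a complete argument: the minimum-degree analysis forcing any putative factorization in degree $\|\la\|_1$ to be cancellation-free with factors $m^{\mu_j},m^{\la-\mu_j}$, followed by the rank-one observation $L=\{\mu\in\ft^*_\Z:\supp(\mu)\subseteq\supp(\la)\}=\Z\la$ (from minimality and primitivity of $\la$) which rules out any nontrivial such splitting. This is a clean way to justify the step the paper leaves to the reader, and all the details check out, including the reduction that $\la\in\Sigma$ is equivalent to $m^\la\notin(\scrA^+)^2$ because $(\scrA^+)^2$ is a two-sided ideal and $\scrA_\la=\scrA_0\,m^\la$.
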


\begin{proof}
If $\la$ is a cocircuit, it is clear that $m^\la\in\scrA^+\smallsetminus \left(\scrA^+\right)^2$, so $\la$ is an equivariant root.
By definition, $0$ is not an equivariant root.
Now suppose that $\la\neq 0$ is not a cocircuit.  By Lemma \ref{cancellation free}, we may write $\la = \mu + \nu$, where
$\mu,\nu\in\ft^*_\Z$ and $\mu_i\nu_i \geq 0$ for all $i$.  Then Equation \eqref{hypertoric product}
tells us that $m^\la = m^\mu m^\nu$, and therefore $\scrA_\la = \scrA_0 m^\la\subset \left(\scrA^+\right)^2$, so $\la$
is not an equivariant root.
\end{proof}

Fix an element $\xi\in\ft_\Z$ such that $\langle\xi,\la\rangle\neq 0$ for every cocircuit $\la$.  We call a cocircuit {\bf positive}
if $\langle\xi,\la\rangle > 0$.
By Proposition \ref{toric eq roots}, equivariant roots are the same as cocircuits, so $\Sigma_+$
is equal to the set of positive cocircuits.

\begin{example}\label{counterexample}
It is tempting to think that every element of $\N\Sigma_+$ can be written as a cancellation-free sum of positive cocircuits.
Unfortunately, this is not the case, as we illustrate here.
Let $T := \cs^3$.  Let $\{e_1,e_2,e_3\}$ be the coordinate basis for the character lattice $\ft^*_\Z$,
and let $\{x_1,x_2,x_3\}$ be the dual basis for the cocharacter lattice $\ft_\Z$.
Let $\gamma_1 := x_1$, $\gamma_2 := x_2$, $\gamma_3 := x_3$,
$\gamma_4 := x_1-x_3$, and $\gamma_5 := x_2-x_3$.  Let $\xi := x_1 - 3x_2 + x_3$.
Then we have $\Sigma_+ = \{e_1, -e_2, e_3, -e_2-e_3, e_1+e_3, -e_1-e_2-e_3\}$.
Let $\la :=  -e_1 - e_2$.
We can write $\la$ as the sum of two positive cocircuits ($e_3$ and $-e_1-e_2-e_3$) or as the cancellation-free
sum of two cocircuits ($-e_1$ and $-e_2$), but not as the cancellation-free sum of two positive cocircuits.
\end{example}

\subsection{The $\mathbf{R}$-module $\mathbf{M}$} \label{modulesec}
Let $S := \C\{q^\la\mid\la\in\N\Sigma_+\}$ as in Section \ref{sec:ring}.
Since $\Sym\Azo \cong \scrA_0$, we have $$R := S\otimes \Sym\Azo \cong S\otimes \scrA_0,$$
and the left $R$-module $S\otimes \scrA_0$ is simply the left regular module.
Let $R_T := \mathcal{O}(T) \otimes_S R$.  Since $R$ is free as an $S$-module, the natural map from $R$ to $R_T$ is an inclusion,
thus we can freely work inside of $R_T$ when doing calculations in $R$.

For any $f(a)\in\scrA_0$ and $\la\in\ft^*_\Z$, let $$f_\la(a) := f(a_1+\la_1\hbar,\ldots,a_n+\la_n\hbar),$$ so that we have
$$f(a) m^\la  = m^\la f_{\la}(a) \in\scrA \and
f(a) q^\la = q^\la f_\la(a) \in R_T.$$
For any $\la\in\ft^*_\Z$, let
\begin{equation}\label{ala}[a]^\la := \prod_i [a_i]^{\la_i} = m^\la m^{-\la}\in \scrA_0.\end{equation}
We observe that, for all $\la\in\ft^*_\Z$, we have $[a]^\la_\la = [a]^{-\la}$.

By definition, $M$ is the left $R$-module $R/J$, where $J$ is the left ideal generated by elements of the form
\begin{eqnarray*}f(a)m^\la g(a)m^{-\la} - q^\lambda g(a)m^{-\la} f(a)m^\la
&=& f(a) g_{-\la}(a) m^\la m^{-\la} - q^{\la} g(a) f_\la(a) m^{-\la} m^\la\\
&=& f(a) g_{-\la}(a) [a]^\la - f(a) g_{-\la}(a) q^\la [a]^{-\la}\\
&=& f(a) g_{-\la}(a) [a]^\la (1-q^\la)
\end{eqnarray*}
for $f(a), g(a) \in\scrA_0$ and $\la\in\N\Sigma_+$.
Therefore, if we define
$$r(\la) := [a]^{\la} (1-q^\la),$$
then
$$J = R\cdot \{ r(\la) \mid \la\in\N\Sigma_+ \}.$$

The rest of this section will be devoted to proving Proposition \ref{JJ'}, which says that $J$ is in fact generated by those classes
$r(\la)$ for $\la$ a positive cocircuit (rather than a sum of positive cocircuits).
The following three lemmas are completely straightforward, so we omit their proofs.

\begin{lemma}\label{la-minus-la}
For any $\la\in\ft^*_\Z$, $r(\la) = -q^\la r(-\la)\in R_T$.
\end{lemma}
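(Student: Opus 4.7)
The plan is to verify the identity by direct computation in $R_T$, using the commutation rule $f(a)q^\la = q^\la f_\la(a)$ together with the observation stated just before the lemma that $[a]^\la_\la = [a]^{-\la}$ (equivalently, $[a]^{-\la}_{-\la} = [a]^\la$).

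First I would expand both sides. The left-hand side is
\[
r(\la) = [a]^\la(1-q^\la) = [a]^\la - [a]^\la q^\la = [a]^\la - q^\la [a]^\la_\la = [a]^\la - q^\la [a]^{-\la},
\]
where the second equality uses the commutation rule and the third uses $[a]^\la_\la = [a]^{-\la}$.

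Next I would expand the right-hand side similarly:
\[
-q^\la r(-\la) = -q^\la [a]^{-\la}(1-q^{-\la}) = -q^\la [a]^{-\la} + q^\la [a]^{-\la} q^{-\la} = -q^\la [a]^{-\la} + [a]^{-\la}_{-\la} = -q^\la [a]^{-\la} + [a]^\la,
\]
using the same identities with $\la$ replaced by $-\la$. Comparing the two expressions gives $r(\la) = -q^\la r(-\la)$.

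There is no real obstacle here; this is a one-line manipulation. The only subtle point is that although $[a]^\la$ and $q^\la$ for arbitrary $\la \in \ft^*_\Z$ need not themselves belong to $R$ in an obvious way, the calculation takes place in $R_T = \mathcal{O}(T)\otimes_S R$, where $q^{-\la}$ is defined, and since $R$ embeds in $R_T$ this is the correct setting for the identity as stated.
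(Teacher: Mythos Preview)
Your proof is correct and is exactly the kind of direct computation the paper has in mind; the paper in fact omits the proof entirely, stating that this lemma (along with the next two) is ``completely straightforward.'' Your verification using the commutation rule $f(a)q^\la = q^\la f_\la(a)$ and the identity $[a]^\la_\la = [a]^{-\la}$ is the natural one-line check.
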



\begin{lemma}\label{key equation}
If $\la = \mu+\nu$, then $r(\la) = q^\nu [a]^\la_\nu (1-q^\mu) + [a]^\la (1-q^\nu)\in R_T$.
\end{lemma}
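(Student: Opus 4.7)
The plan is to verify the identity by a direct calculation in $R_T$, using only the basic commutation relation $f(a)\, q^\nu = q^\nu f_\nu(a)$ that governs the $R$-module structure on $R_T$. The key observation is the purely multiplicative factorization
\[
1 - q^\la \;=\; (1 - q^\nu) + q^\nu(1 - q^\mu),
\]
which holds because $\la = \mu + \nu$.

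From there, the proof is two lines. First I would multiply the above identity on the left by $[a]^\la$ to obtain
\[
r(\la) \;=\; [a]^\la(1 - q^\la) \;=\; [a]^\la(1-q^\nu) \;+\; [a]^\la q^\nu (1 - q^\mu).
\]
Then, applying the commutation rule $f(a)\, q^\nu = q^\nu f_\nu(a)$ to $f(a) = [a]^\la$, the second term on the right becomes $q^\nu [a]^\la_\nu(1-q^\mu)$. This yields exactly
\[
r(\la) \;=\; q^\nu [a]^\la_\nu (1 - q^\mu) + [a]^\la(1 - q^\nu),
\]
as claimed.

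There is essentially no obstacle: the only point that requires care is keeping track of the direction of the shift (the subscript $\nu$ in $[a]^\la_\nu$ denotes $a_i \mapsto a_i + \nu_i \hbar$), but this matches the sign convention of the commutation rule stated in Section \ref{modulesec} and confirmed by the preceding definition $f_\la(a) := f(a_1+\la_1\hbar,\ldots,a_n+\la_n\hbar)$. This is why the author lists this among the ``completely straightforward'' lemmas.
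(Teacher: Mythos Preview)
Your proof is correct and is exactly the straightforward computation the paper has in mind; indeed, the paper omits the proof entirely, calling it ``completely straightforward.'' The only content is the factorization $1-q^\la=(1-q^\nu)+q^\nu(1-q^\mu)$ together with the commutation rule $f(a)\,q^\nu=q^\nu f_\nu(a)$, both of which you invoke correctly.
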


\begin{lemma}\label{cf}
If $\mu,\nu\in\ft^*_\Z$ and $\mu_i\nu_i\geq 0$ for all $i$, then there exist $f(a), g(a)\in \scrA_0$ such that
$$[a]^{\mu+\nu}_\nu = f(a)[a]^\mu\and [a]^{\mu+\nu} = g(a)[a]^\nu.$$
\end{lemma}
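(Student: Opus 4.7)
The plan is to reduce the lemma to a coordinate-by-coordinate calculation, using the fact that both $[a]^{\mu+\nu}$ and $[a]^\nu$ (and analogously the shifted versions) factor as products over the index $i$. Specifically, since
\[
[a]^{\mu+\nu}_\nu = \prod_i [a_i + \nu_i\hbar]^{\mu_i+\nu_i}, \qquad [a]^{\mu+\nu} = \prod_i [a_i]^{\mu_i+\nu_i},
\]
it suffices to prove, for each fixed $i$, that $[a_i+\nu_i\hbar]^{\mu_i+\nu_i}$ is divisible (on the right) by $[a_i]^{\mu_i}$ and that $[a_i]^{\mu_i+\nu_i}$ is divisible by $[a_i]^{\nu_i}$, as polynomials in $a_i$ with coefficients in $\C[\hbar]$. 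Because the factors commute with one another, there is no ordering issue; we may simply multiply the resulting one-variable quotients together to obtain $f(a)$ and $g(a)$.

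For each coordinate $i$, the hypothesis $\mu_i\nu_i \geq 0$ splits into three essentially identical cases: either $\mu_i=0$ or $\nu_i=0$ (which is trivial), or $\mu_i$ and $\nu_i$ have the same sign. Assume first that both are nonnegative. Then by the definition \eqref{aik},
\[
[a_i]^{\mu_i+\nu_i} = a_i(a_i-\hbar)\cdots (a_i-(\mu_i+\nu_i-1)\hbar),
\]
whose first $\nu_i$ factors form exactly $[a_i]^{\nu_i}$, so the second identity is immediate with $g_i(a_i)$ equal to the remaining $\mu_i$ consecutive factors. For the first identity, one expands $[a_i+\nu_i\hbar]^{\mu_i+\nu_i}$: its last $\mu_i$ factors are precisely $a_i,(a_i-\hbar),\ldots,(a_i-(\mu_i-1)\hbar)$, i.e.\ $[a_i]^{\mu_i}$, while the first $\nu_i$ factors produce $f_i(a_i) = (a_i+\nu_i\hbar)(a_i+(\nu_i-1)\hbar)\cdots(a_i+\hbar)$. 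The case $\mu_i,\nu_i\leq 0$ is strictly analogous using the second branch of \eqref{aik}; in fact, a unified way to see this is to note that $[a_i]^{-k}$ equals $(-1)^k[-a_i]^k$, so the negative case reduces to the positive one after substitution.

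Taking products of the one-variable results across all $i$ gives the desired $f(a)$ and $g(a)\in\scrA_0$. I do not expect any genuine obstacle: the entire content of the lemma is that the no-cancellation condition $\mu_i\nu_i\geq 0$ ensures that the three ``falling factorials'' in the $i$-th coordinate are nested as contiguous runs along an $\hbar\mathbb{Z}$-arithmetic progression, so divisibility holds on the nose. The only care needed is bookkeeping of the $\hbar$-shifts when one passes between $[a]^{\mu+\nu}_\nu$ and $[a]^{\mu+\nu}$, and this is handled by the explicit factorization above.
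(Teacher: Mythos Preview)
Your argument is correct and is exactly the direct coordinate-by-coordinate verification that the paper intends; note that the paper explicitly omits the proof of this lemma as ``completely straightforward,'' so there is nothing further to compare. One small correction: the parenthetical identity $[a_i]^{-k}=(-1)^k[-a_i]^k$ is not quite right (the two sides differ by an $\hbar$-shift: $(-1)^k[-a_i]^k = a_i(a_i+\hbar)\cdots(a_i+(k-1)\hbar)$, whereas $[a_i]^{-k}=(a_i+\hbar)\cdots(a_i+k\hbar)$), but this does not affect your main argument, since you already handle the negative case by the analogous explicit factorization.
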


\begin{proposition}\label{JJ'}
We have $$J = R\cdot \{ r(\la) \mid \la\in\Sigma_+ \}.$$
\end{proposition}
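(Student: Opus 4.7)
The plan is to induct on the length $m$ of a cancellation-free decomposition $\la = \mu^1 + \cdots + \mu^m$ of $\la \in \N\Sigma_+$ into cocircuits (furnished by Lemma \ref{cancellation free}). The base cases are immediate: $r(0) = 0$, and if $\la$ is itself a cocircuit lying in $\N\Sigma_+$ then $\la \in \Sigma_+$, because each cocircuit pair $\pm\mu$ has exactly one member in $\Sigma_+$ and the generic choice of $\xi$ ensures $\N\Sigma_+ \cap (-\N\Sigma_+) = \{0\}$.

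For the inductive step with $m \geq 2$, I want to split $\la = \mu + \nu$ with $\mu := \mu^k$ a single carefully chosen cocircuit and $\nu := \sum_{j \neq k}\mu^j$. The pair $(\mu,\nu)$ inherits the cancellation-free property, so Lemma \ref{key equation} combined with Lemma \ref{cf} produces
$$r(\la) \;=\; q^\nu f(a)\, r(\mu) \;+\; g(a)\, r(\nu)$$
for some $f(a), g(a) \in \scrA_0$. I choose $k$ so that $\nu \in \N\Sigma_+$, which both ensures $q^\nu \in S \subset R$ and makes the inductive hypothesis available for $r(\nu)$. Such a $k$ always exists: if some $\mu^j$ is a negative cocircuit, i.e.\ $-\mu^j \in \Sigma_+$, I take $k = j$ and use $\nu = \la + (-\mu^k) \in \N\Sigma_+$; otherwise every $\mu^j$ is already positive and any index $k$ works, since then $\nu$ is a sum of positive cocircuits.

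It then remains to place $q^\nu f(a)\, r(\mu)$ inside the right ideal generated by $\{r(\mu')\mid\mu'\in\Sigma_+\}$. If $\mu\in\Sigma_+$ there is nothing to do; if instead $-\mu \in \Sigma_+$, I apply Lemma \ref{la-minus-la} to replace $r(\mu) = -q^\mu r(-\mu)$ and commute $q^\mu$ past $f(a)$ using $f(a) q^\mu = q^\mu f_\mu(a)$, so that the coefficient collapses to $-q^{\nu+\mu} f_\mu(a) = -q^\la f_\mu(a) \in R$ (since $\la \in \N\Sigma_+$). The main obstacle, and the reason I bother with cancellation-free decompositions at all, is precisely this need to keep every $q$-exponent inside $\N\Sigma_+$: Example \ref{counterexample} shows that not every $\la \in \N\Sigma_+$ admits a cancellation-free decomposition into positive cocircuits, so one cannot simply iterate on positive cocircuits directly. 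The peel-a-negative-cocircuit-first rule rescues the first summand through the identity $\nu + \mu = \la \in \N\Sigma_+$ even when $\mu$ itself is a negative cocircuit.
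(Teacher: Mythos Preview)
Your proof is correct and follows essentially the same approach as the paper: induct on the length of a cancellation-free decomposition into (not necessarily positive) cocircuits, peel off one cocircuit using Lemmas \ref{key equation} and \ref{cf}, and handle a negative cocircuit via Lemma \ref{la-minus-la} together with the observation that $\nu = \la + (-\mu) \in \N\Sigma_+$. The only cosmetic difference is that you choose which index to peel off (a negative cocircuit if one exists), whereas the paper reorders so that it is always the last.
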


\begin{proof}
Let $J' := R\cdot \{ r(\la) \mid \la\in\Sigma_+ \}.$  We need to show that, if $\la\in\N\Sigma_+$, then $r(\la)\in J'$.
By Lemma \ref{cancellation free}, we can choose cocircuits $\mu^1,\ldots,\mu^m$ (not necessarily positive) such that $\la=\mu^1+\cdots+\mu^m$
and this sum is cancellation-free.
We will prove that $r(\la)\in J'$ by induction on $m$.  The base case $m=0$ follows from the fact that $r(0)=0$.

Let us first assume that $\mu^i\in\Sigma_+$ for all $i$.  Let $\mu = \mu^m$ and $\nu = \mu^1+\cdots+\mu^{m-1}$.
By Lemma \ref{key equation}, we have $$r(\la) = q^\nu [a]^\la_\nu (1-q^\mu) + [a]^\la (1-q^\nu).$$
By Lemma \ref{cf}, there are elements $f(a), g(a)\in \scrA_0$ such that
\begin{eqnarray*}r(\la)
&=& q^\nu f(a)[a]^\mu (1-q^\mu) + g(a)[a]^\nu (1-q^\nu)\\
&=& q^\nu f(a) r(\mu) + g(a) r(\nu).
\end{eqnarray*}
Our inductive hypothesis tells us that $r(\mu)\in J'$, so we are done.

Next, assume that there is at least one index $i$ for which $-\mu_i\in\Sigma_+$.  After reordering, we may assume that $i=m$.
Once again, let $\mu = \mu^m$ and $\nu = \mu^1+\cdots+\mu^{m-1}$.
By the same reasoning as above, there are elements $f(a), g(a)\in \scrA_0$ such that
$$r(\la) = q^\nu f(a) r(\mu) + g(a) r(\nu).$$
By Lemma \ref{la-minus-la}, we have $r(\mu) = -q^\mu r(-\mu)$, so
\begin{eqnarray*} r(\la) &=& -q^{\nu} f(a) q^\mu r(-\mu) + g(a) r(\nu)\\
&=& -q^{\nu+\mu} f_{\mu}(a) r(-\mu) + g(a) r(\nu)\\
&=& -q^{\la} f_{\mu}(a) r(-\mu) + g(a) r(\nu).
\end{eqnarray*}
Since $-\mu\in\Sigma_+$, $r(-\mu)\in J'$.  Since $\nu = \la + (-\mu) \in \N\Sigma_+$ and $\nu$ is a cancellation-free
sum of $m-1$ cocircuits, our inductive hypothesis tells us that $r(\nu)\in J'$.
Thus $r(\la)\in J'$, as desired.
\end{proof}

\begin{remark}
If we knew that $\la\in\N\Sigma_+$ could be written as a cancellation-free sum of positive cocircuits, then the last paragraph
of the proof of Proposition \ref{JJ'} would be unnecessary.  However, this is not always the case, as we saw in Example \ref{counterexample}.
In that particular example, we have
$r(\la) = -q^\la a_2a_5 r(e_1) + (a_1+\hbar)(a_4+\hbar) r(-e_2)$.
\end{remark}

\subsection{The $\mathbf{\Ereg}$-module $\mathbf{\Qreg}$}
Recall that
$$E := F\otimes \Sym H^2_{T\times\cs}(\tX; \C)$$
and $\Ereg$ is the Ore localization of $E$ obtained by inverting $(1-q^{\alpha})$ for all $\alpha \in \Delta_+$. 

\begin{proposition}\label{classical}
The $\Ereg$-module $\Qreg$ is cyclic, generated by the class $1\otimes 1\in \Qreg$.
\end{proposition}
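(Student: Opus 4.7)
The plan is to show that $V := \Ereg \cdot (1 \otimes 1) = \Qreg$. Since $V$ is closed under the $\Freg$-action on $\Qreg = \Freg \otimes H^*_{T\times\cs}(\tX;\C)$, it suffices to prove $1 \otimes H^*_{T\times\cs}(\tX;\C) \subseteq V$. To this end I would invoke the standard fact that, for a hypertoric variety, $H^*_{T\times\cs}(\tX;\C)$ is generated as a ring by its degree-$2$ part. This reduces the problem to showing $1 \otimes (u_1 \cup \cdots \cup u_k) \in V$ for every $k \geq 0$ and every $u_1, \ldots, u_k \in H^2_{T\times\cs}(\tX;\C)$, which I would prove by induction on $k$. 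The base case $k = 0$ is immediate.

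For the inductive step, apply $u_1 \in H^2_{T\times\cs}(\tX;\C) \subset \Ereg$ to the class $1 \otimes (u_2 \cup \cdots \cup u_k)$, which lies in $V$ by the inductive hypothesis. Using $\partial_{\bar u_1}(1) = 0$ together with the divisor quantum multiplication formula of Section \ref{sec:quantum-connection},
$$u_1 \cdot \bigl(1 \otimes (u_2 \cup \cdots \cup u_k)\bigr) = -1 \otimes (u_1 \cup \cdots \cup u_k) - \hbar \sum_{\alpha \in \Delta_+} \langle \alpha, \bar u_1 \rangle \frac{q^\alpha}{1-q^\alpha} \otimes L_\alpha(u_2 \cup \cdots \cup u_k).$$
The crucial observation is that the operator $L_\alpha$ preserves cohomological degree: this is forced by the homogeneity of the quantum product, since $\hbar$ lies in degree $2$ and the classical product $u_1 \cup \cdot$ raises degree by $2$. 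Consequently $L_\alpha(u_2 \cup \cdots \cup u_k)$ lies in $H^{2(k-1)}_{T\times\cs}(\tX;\C)$, so $1 \otimes L_\alpha(u_2 \cup \cdots \cup u_k) \in V$ by the inductive hypothesis. Rearranging the displayed identity expresses $1 \otimes (u_1 \cup \cdots \cup u_k)$ as an $\Ereg$-combination of elements of $V$, which completes the induction.

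I do not anticipate a serious obstacle. The argument is a purely formal induction once one grants the generation of the hypertoric equivariant cohomology ring in degree $2$ and carefully tracks cohomological degrees through the divisor multiplication formula. The hardest piece of bookkeeping is the degree-preservation of $L_\alpha$, which follows directly from the grading of the quantum connection.
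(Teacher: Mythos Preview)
Your argument is correct, and it differs from the paper's route in an instructive way. The paper does not run an induction on degree; instead it identifies special tuples of divisors (``classical'' tuples) whose quantum product equals their cup product, so that acting by the corresponding elements of $\Ereg$ on $1\otimes 1$ produces exactly $\pm 1\otimes u_\cup$ with no correction terms at all. It then invokes \cite[Corollary 3.3 and Lemma 3.4]{McBP} to assert that such classical products span $H^*_{T\times\cs}(\tX;\C)$ over $H^*_{T\times\cs}(pt;\C)$. Your approach avoids that external input: you only need generation of $H^*_{T\times\cs}(\tX;\C)$ in degree $2$ (Konno's presentation) together with the observation that $L_\alpha$ preserves cohomological degree, so the quantum correction terms land in strictly lower filtration and are handled by the inductive hypothesis. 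This is more self-contained and in fact works for any symplectic resolution satisfying Okounkov's divisor formula whose equivariant cohomology is generated in degree $2$; the paper's argument, by contrast, singles out explicit spanning classes (the classical tuples), which carries extra geometric content but requires the cited results from \cite{McBP}.
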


\begin{proof}
Given a $k$-tuple $u = (u_1, \ldots, u_k) \in H_{T\times \cs}^{\bullet}(\tX;\C)^k$,
we define $u_*\in \Qreg$ to be the quantum product of the entries, and we define $u_\cup \in Q$
to be the tensor product of $1\in F$ with the classical product of the entries.  We call $u$ {\bf classical} if $u_* = u_\cup$. It is easy to see that,
if $u$ is a classical tuple, then  $u_1 u_2 \cdots u_k \cdot 1 \otimes  1 = \pm 1 \otimes u_\cup$ in $Q_\reg$.
The cohomology ring
$H_{T\times \cs}^{\bullet}(\tX;\C)$ is spanned over $H^{\bullet}_{T \times \cs}(pt; \C)$
by products of classical tuples of divisors \cite[Corollary 3.3 and Lemma 3.4]{McBP},
therefore $\Qreg$ is generated by $1\otimes 1$.
\end{proof}

It follows from Proposition \ref{classical} that $\Qreg$ is isomorphic as an $\Ereg$-module to the quotient of the regular module $\Ereg$
by some left ideal, namely the annihilator of $1\otimes 1$.  Our goal is now to compute that ideal.

By definition, we have $K\subset\cs^n$ and therefore $\fk_\Z\subset\Z^n$.  For any $\a\in\fk_\Z$ and $i\leq n$, we define $\a_i\in\Z$
to be its $i^\text{th}$ coordinate.
A nonzero primitive element of $\fk_\Z$ with minimal support is called a {\bf circuit}.  Each circuit pairs nontrivially with
the element $\theta\in\fk_\Z^*$, and we call a circuit $\a$ {\bf positive} if $\langle\a,\theta\rangle>0$.

Let $\chi_1,\ldots,\chi_n$ be the coordinate basis for $\Z^n = \Hom(\mathbb{G}_m^n, \mathbb{G}_m)$. The equivariant Kirwan map is an isomorphism
from $\Z[\chi_1,\ldots,\chi_n,\hbar/2]$ to $H^2_{T\times\cs}(\tX; \Z)$, which takes a character of $\cs^n \times \cs$ to the $T \times \cs$ equivariant chern class of the associated line bundle on the quotient. We let $u_i$ be the image of $\chi_i - \hbar/2$.\footnote{The
class $\chi_i - \hbar/2$ is the weight of the normal bundle to the hyperplane $\{z_i = 0 \}$ in $T^*\C^n$, and $u_i$ is therefore represented 
by the Borel-Moore cycle given by the image of this hyperplane in $\tX$.} Setting $\hbar/2$ equal to zero and passing to the quotient
$\fk_\Z^*$ of $\Z^n$, we obtain the ordinary Kirwan map, which is an isomorphism from
$\fk^*_{\Z}$ to $H^2(\tX, \Z)$ taking $\theta$ to the first Chern class of an ample line bundle.
The dual isomorphism $\fk_\Z \cong H_2(\tX, \Z)$
takes the circuits bijectively to the K\"ahler roots of $\tX$ and the positive circuits to the set $\Delta_+$ of positive K\"ahler roots
\cite[Theorem 4.2]{McBS}.

For each positive circuit $\a\in\Delta_+$, let $$s(\a) := [u]^\a(1-q^\a)\in \Ereg,$$
where $[u]^\a\in \Sym H^2_{T\times\cs}(\tX; \C)$ is defined in a manner analogous to the definition of $[a]^{\la}$ in Equations
\eqref{aik} and \eqref{ala}.  Let $$\Ireg := \Ereg\cdot \{s(\a)\mid \a\in \Delta_+\}.$$
We will prove that $\Ireg\subset\Ereg$ is equal to the annihilator of $1\otimes 1\in\Qreg$.

Let $\barEreg := \Ereg\otimes_{\C[\hbar]}\C \cong F\otimes H^2_T(\tX; \C)$ be the algebra obtained from $\Ereg$ by setting
$\hbar = 0$.  Similarly, let $\barQreg := \barEreg\otimes_{\Ereg}\Qreg$ and $\barIreg := \barEreg\otimes_{\Ereg}\Ireg$.
For any $e\in \Ereg$, let $\bar e$ denote its image in $\barEreg$.

\begin{lemma}\label{barIreg}
The ideal $\barIreg\subset\barEreg$ is equal to the annihilator of $1\otimes 1\in\barQreg$.
\end{lemma}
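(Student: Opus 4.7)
The plan is to identify both $\mathrm{Ann}(1\otimes 1)$ and $\barIreg$ with the common ideal $\Freg\otimes\ker\phi$, where $\phi: \Sym H^2_T(\tX;\C) \to H^*_T(\tX;\C)$ is the evaluation map onto classical cohomology.

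At $\hbar=0$ the commutation $uq^\a = q^\a(u+\hbar\langle\a,\bar u\rangle)$ collapses, so $\barEreg = \Freg\otimes\Sym H^2_T(\tX;\C)$ is commutative, and the operator $\hbar\partial_{\bar u}-u\star$ by which $u\in H^2_T(\tX;\C)$ acts on $\barQreg$ reduces to $-u\cup$ (the correction term in quantum multiplication by a divisor carries a factor of $\hbar$). Hence the action map $e\mapsto e\cdot(1\otimes 1):\barEreg\to\barQreg$ coincides, up to signs irrelevant for the kernel, with $\mathrm{id}_{\Freg}\otimes\phi$. Proposition \ref{classical} specialized at $\hbar=0$ makes this map surjective, so its kernel---the annihilator of $1\otimes 1$---equals $\Freg\otimes\ker\phi$.

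On the other hand, each $(1-q^\a)$ is a unit in $\Freg\subseteq\barEreg$, so the generator $\bar{s}(\a)=[u]^\a(1-q^\a)$ of $\barIreg$ differs from $[u]^\a$ by a unit. Consequently $\barIreg$ equals the ideal of $\barEreg$ generated by $\{[u]^\a : \a\in\Delta_+\}$, which is $\Freg$ tensored with the ideal of $\Sym H^2_T(\tX;\C)$ generated by these same classes. By condition (3) of Section \ref{sec:basics}, circuits have coordinates in $\{-1,0,1\}$, so at $\hbar=0$ each $[u]^\a$ is the square-free monomial $\prod_{i\in\supp(\a)}u_i$.

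The main obstacle is the classical Stanley--Reisner-type presentation of hypertoric equivariant cohomology (Konno, Hausel--Sturmfels), which asserts precisely that $\ker\phi$ is generated by these monomials as $\a$ ranges over $\Delta_+$; granting this, we conclude $\barIreg = \Freg\otimes\ker\phi = \mathrm{Ann}(1\otimes 1)$.
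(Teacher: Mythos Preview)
Your proposal is correct and follows essentially the same route as the paper: reduce the generators $\overline{s(\a)}$ to square-free monomials (using invertibility of $1-q^\a$ and unimodularity), and then invoke Konno's Stanley--Reisner presentation of $H^*_T(\tX;\C)$ to identify the resulting ideal with $\ker\phi$. You spell out more carefully than the paper why, at $\hbar=0$, the annihilator of $1\otimes 1$ is exactly $\Freg\otimes\ker\phi$ (the paper simply asserts this ``by definition''); note however that the surjectivity you extract from Proposition~\ref{classical} is not actually needed for that identification.
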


\begin{proof}
Since $\barEreg$ is commutative, we have $$\overline{s(\a)} = (1-q^\a)\overline{[u]^\a} = (1-q^\a)\prod_{i=1}^n u_i^{|\a_i|}$$
for every positive cocircuit $\a$.
Since $(1-q^\a)$ is invertible, this implies that $\barIreg$ is generated by the square-free monomials in $u$ corresponding to
supports of circuits.  This in turn is equal the kernel of the natural map $\Sym H^2_T(\tX; \C)\to H^*_T(\tX; \C)$ \cite[Theorem 2.4]{Konno-eq},
which is by definition the annihilator of $1\otimes 1$.
\end{proof}

\begin{proposition}\label{Ireg}
The ideal $\Ireg\subset\Ereg$ is equal to the annihilator of $1\otimes 1\in\Qreg$.
\end{proposition}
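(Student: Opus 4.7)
The plan is to prove Proposition \ref{Ireg} by establishing both inclusions and then applying reduction modulo $\hbar$ together with graded Nakayama. First I verify that $\Ireg$ annihilates the cyclic vector $1 \otimes 1 \in \Qreg$, so by Proposition \ref{classical} we obtain a well-defined $\Ereg$-linear surjection $\phi \colon \Ereg/\Ireg \twoheadrightarrow \Qreg$ sending $1 \mapsto 1 \otimes 1$. The bulk of the argument is then showing $\phi$ is injective by reducing modulo $\hbar$ and invoking Lemma \ref{barIreg}.

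For the forward inclusion $\Ireg \subseteq \mathrm{Ann}(1 \otimes 1)$, the key task is to show that $s(\a) = [u]^\a(1-q^\a)$ annihilates $1\otimes 1$ for each positive K\"ahler root $\a$. This is to be checked directly from the quantum connection formula
$$u\star\cdot = u\cup\cdot + \hbar\sum_{\b \in \Delta_+}\langle\b,\bar u\rangle\frac{q^\b}{1-q^\b}L_\b(\cdot)$$
specialized to the hypertoric setting, using the explicit description of the correspondences $L_\b$ from McBreen--Shenfeld. The polynomial $[u]^\a$ is designed precisely so that the successive actions of its factors $(u_i - k\hbar)$ on $1 \otimes 1$ telescope to produce a single denominator factor of $(1-q^\a)$, which is then cleared by the $(1-q^\a)$ on the right.

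For the reverse inclusion, let $N := \ker\phi$. The module $\Qreg = \Freg \otimes H^*_{T\times\cs}(\tX;\C)$ is free as a $\C[\hbar]$-module since $\tX$ is equivariantly formal. Tensoring the short exact sequence $0 \to N \to \Ereg/\Ireg \to \Qreg \to 0$ with $\C[\hbar]/(\hbar)$ over $\C[\hbar]$, freeness of $\Qreg$ kills the $\mathrm{Tor}_1$ term and yields an injection $N/\hbar N \hookrightarrow \barEreg/\barIreg$ landing in $\ker\bar\phi$. By Lemma \ref{barIreg} this kernel is zero, so $N/\hbar N = 0$. Since $N$ inherits an $\N$-grading from $\Ereg/\Ireg$ (the Ore localization $\Ereg$ is $\N$-graded because every element $1-q^\a$ sits in degree $0$) with $\hbar$ in strictly positive degree, graded Nakayama forces $N = 0$.

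The main obstacle is step one: verifying by hand that the quantum-corrected operator $[u]^\a$ acts on $1 \otimes 1$ with exactly the pole structure $(1-q^\a)^{-1}$ predicted by $s(\a)$. Once this is done, step two is essentially formal, assuming the equivariant formality of $\tX$ needed for freeness of $\Qreg$ over $\C[\hbar]$, which is standard for smooth projective symplectic resolutions with torus action.
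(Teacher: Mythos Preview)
Your proposal is correct and follows essentially the same approach as the paper. The paper likewise establishes the forward inclusion by invoking \cite[Proposition 6.4]{McBS} (rather than redoing the telescoping computation you sketch), and for the reverse inclusion it runs an explicit induction on degree using Lemma \ref{barIreg} and freeness of $\Qreg$ over $\C[\hbar]$; your graded Nakayama argument is exactly the same induction packaged more slickly.
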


\begin{proof}
The fact that each $s(\a)$ annihilates $1\otimes 1$ is proved in \cite[Proposition 6.4]{McBS}\footnote{The notations of \cite{McBS} compare with ours as follows. The function $q^{S}$ in that paper corresponds to our function $q^{\alpha}$, where $\alpha$ is the root associated to $S$. This differs by a factor of the ``theta characteristic'' $(-1)^{|S|}$ from the function $q^{\beta_S}$ which also appears in \cite{McBS}, but plays no role here. The quantum connection in \cite{McBS} is defined via the formula $\frac{d}{du} + u \star$ rather than $\frac{d}{du} - u \star$, as in this paper. Thus the operator $\mathcal{E}_i$ in \cite{McBS} is conjugate to the operator $u_i$ on $\Qreg$ in this paper via the automorphism of $H_{T\times \mathbb{G}_m}^{*}(\tX ; \C)$ which multiplies an element of $H_{T\times \mathbb{G}_m}^{2n}(\tX ; \C)$ by $(-1)^n$. Finally, the variable $\hbar$ in \cite{McBS} is a primitive character of the dilating torus, whereas for us it is the weight of the symplectic form.}, thus $\Ireg$ is contained
in the annihilator of $1\otimes 1$.  For the opposite inclusion, let $e\in\Ereg$ be a class of degree $k$ that annihilates $1\otimes 1$.
We will prove by induction on $k$ that $e\in\Ireg$.  This is trivial if $k=0$, in which case we must have $e=0$.
For general $k$, we know that $\bar e\in\barEreg$ annihilates $1\otimes 1\in\barQreg$, and therefore Lemma \ref{barIreg}
tells us that $\bar e\in\barIreg$.
This means that there exists some $i\in \Ireg$ of degree $k$
and $e'\in\Ereg$ of degree $k-2$ such that $e = i+\hbar e'$.  Then $\hbar e' = e-i$ annihilates $1\otimes 1$.
Since $\Qreg$ is a free module over $\C[\hbar]$, this implies that $e'$ annihilates $1\otimes 1$.
By our inductive hypothesis, $e'\in\Ireg$, therefore $e\in\Ireg$.
\end{proof}

\subsection{Duality}
Recall from Section \ref{sec:basics} that we began with the data of $\gamma\in(\ft_\Z)^n$ satisfying three conditions.
This can be interpreted as a surjective map $\Z^n\to\ft_\Z$, and we thus obtain an exact sequence
$$0 \to \fk_{\Z} \to \Z^n \to \ft_{\Z} \to 0.$$
By dualizing this sequence, we obtain an element $\gamma^!\in(\fk_\Z^*)^n$, satisfying the same three conditions,
known as the {\bf Gale dual} of $\gamma$.  We then have $T^! \cong K^*$ and $K^! \cong T^*$.
Let $X^!$ be the corresponding hypertoric variety, and choose a generic element
$\theta^!\in\ft_\Z$ to obtain a symplectic resolution $\tX^!\to X^!$.
We have
$$(\ft^!)^* \cong \fk \cong H_2(\tX; \C) \and H^2(\tX^!; \C) \cong (\fk^!)^* \cong \ft$$ via the Kirwan maps for $\tX$ and $\tX^!$,
and
$$H^2_{T^!\times\cs}(\tX^!; \C) = \C\{u^!_1,\ldots,u^!_n,\hbar\} \cong \C\{a_1,\ldots,a_n,\hbar\} = \Azo,$$
where the isomorphism takes $u_i^!$ to $a_i$ and $\hbar$ to $\hbar$. These isomorphisms are easily seen to be compatible with the maps in the equivariant and quantum exact sequences, thus the first
item in Section \ref{sec:duality} is satisfied.  The isomorphism $\ft_\Z^*\cong\fk_\Z^!$ induces a bijection between cocircuits for $\gamma$
and circuits for $\gamma^!$, and therefore between equivariant roots for $X$ and K\"ahler roots for $\tX^!$.
If we choose $\theta^! = \xi$, then the positive equivariant roots match the positive K\"ahler roots, and the second
item in Section \ref{sec:duality} is satisfied.  A more formal proof of symplectic duality between $X$ and $X^!$ 
appears in \cite{BLPWgco}[Theorem 10.8].

\begin{theorem}\label{hypertoric-pi}
Conjecture \ref{pi} holds for hypertoric varieties.
\end{theorem}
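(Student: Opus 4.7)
The plan is to exhibit both $\Mreg$ and $\Qreg^!$ as cyclic modules over $\Rreg \cong \Ereg^!$ with matching defining left ideals. On the algebraic side, Proposition \ref{JJ'} gives the presentation
\[
\Mreg \;\cong\; \Rreg \big/ \Rreg \cdot \{r(\la) \mid \la \in \Sigma_+\},
\qquad r(\la) = [a]^\la(1-q^\la),
\]
with $1 \in \Mreg$ corresponding to the class of $1$. On the geometric side, Proposition \ref{classical} applied to $\tX^!$ shows that $\Qreg^!$ is cyclic, generated by $1 \otimes 1$, and Proposition \ref{Ireg} identifies its annihilator as $\Ireg^! = \Ereg^! \cdot \{s^!(\a) \mid \a \in \Delta_+^!\}$, where $s^!(\a) = [u^!]^\a(1-q^\a)$. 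Thus
\[
\Qreg^! \;\cong\; \Ereg^! \big/ \Ireg^!,
\]
with $1 \in \Qreg^!$ corresponding to the class of $1$.

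Next, I would identify the two presentations using the duality data. Under the isomorphism $\Azo \cong H^2_{T^!\times\cs}(\tX^!; \C)$ from Section \ref{sec:sd}, which in the hypertoric case sends $a_i$ to $u_i^!$ (as is built into the Gale duality recalled in the duality subsection), the element $[a]^\la = \prod_i [a_i]^{\la_i}$ maps to $[u^!]^\la$. Gale duality also sets up a bijection between cocircuits of $\gamma$ and circuits of $\gamma^!$, that is, between equivariant roots of $X$ and K\"ahler roots of $\tX^!$, and the compatible choice $\theta^! = \xi$ matches positive cocircuits with positive circuits, so $\Sigma_+$ is identified with $\Delta_+^!$. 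The induced ring isomorphism $\Rreg \cong \Ereg^!$ sends $(1-q^\la) \leftrightarrow (1-q^\la)$, hence carries each $r(\la)$ to the corresponding $s^!(\la)$. The two generating sets therefore coincide under $\Rreg \cong \Ereg^!$, the left ideals they generate coincide, and the quotients are isomorphic via an isomorphism sending $1$ to $1$.

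The substantive content is thus already packaged in the two propositions just cited. The harder of the two is Proposition \ref{JJ'}, which cuts the generating set of $J$ down from all of $\N\Sigma_+$ to the positive cocircuits via the cancellation-free decomposition of Lemma \ref{cancellation free}; its dual counterpart, Proposition \ref{Ireg}, uses a deformation from $\hbar = 0$ down to arbitrary $\hbar$ together with Konno's classical presentation of $H^*_T(\tX^!; \C)$. Once these matching presentations are in hand, the theorem reduces to the purely combinatorial fact that the equivariant roots of $X$ and the K\"ahler roots of $\tX^!$ correspond under Gale duality, which is precisely the form of symplectic duality already established for hypertoric pairs.
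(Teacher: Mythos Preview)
Your proof is correct and follows essentially the same approach as the paper: both sides are presented as cyclic modules with explicitly identified annihilator ideals via Propositions~\ref{JJ'} and~\ref{Ireg}, and then the ideals are matched under the ring isomorphism $\Rreg\cong\Ereg^!$ using the Gale-duality identification $\Sigma_+\leftrightarrow\Delta_+^!$ and $a_i\leftrightarrow u_i^!$. Your write-up is somewhat more explicit (you separately invoke Proposition~\ref{classical} for cyclicity of $\Qreg^!$, which the paper leaves implicit in citing Proposition~\ref{Ireg}), but the argument is the same.
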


\begin{proof}
Proposition \ref{JJ'} tells us that $M \cong R\; /\; R\cdot \{ r(\la) \mid \la\in\Sigma_+ \}$, and therefore that
$\Mreg \cong \Rreg\; /\; \Rreg\cdot \{ r(\la) \mid \la\in\Sigma_+ \}$.
On the other hand, Proposition \ref{Ireg} tells us that $\Qreg^! \cong \Sreg^!\; /\; \Sreg^!\cdot \{ s(\a) \mid \a\in\Delta_+ \}$.
We know that $\Sreg^!\cong\Rreg$ and that the isomorphism $\ft_\Z^*\cong\fk_\Z^!$ takes $\Sigma_+$ to $\Delta_+$,
thus the theorem follows from the fact that $r(\la)$ and $s(\a)$ are defined by the same formula. 
\end{proof}

\section{The Springer resolution}
Our goal in this section to prove Conjecture \ref{piW} for the Springer resolution.

\subsection{The algebraic Harish-Chandra map}
Let $ G $ be a semisimple complex group, which we assume to be of adjoint type.
Following the notation of Example \ref{Springer example}, we let $ X := \mathcal N$ and 
$$ \scrA := \Uhg \otimes_{Z(\Uhg)} \big(\Sym \ft \otimes \C[\hbar]\big),$$ 
which implies that $$\Azo = \ft \oplus \C\hbar \oplus \ft. $$
In particular, the quantization exact sequence \eqref{eq:quant-exact} naturally splits.
The Namikawa Weyl group coincides with the usual Weyl group $ W $ of $ G $, and we have
$$ \scrA^W \cong \Uhg\and (\Sym \Azo)^W \cong \Sym \ft \otimes \C[\hbar] \otimes (\Sym \ft)^W. $$

We define the {\bf Weyl vector} $\rho\in \frac{1}{2}\ft^*_\Z$ to be half the sum of the positive roots,
and the {\bf dual Weyl vector} $\xi\in\ft_\Z$ to be half the sum of the positive coroots.
The equivariant roots $\Sigma\subset\ft^*$ of $X$ coincide with the roots in the usual Lie theory sense,
and the positive equivariant roots $\Sigma_+\subset\Sigma$ (those that pair positively with $\xi$) coincide with the positive roots
in the Lie theory sense.

For every element $ \lambda \in \ft^* $, we have an evaluation map $ \Sym \ft \rightarrow \C $.  We can apply the Rees construction to this map and obtain a graded $\C[\hbar]$-algebra map $ \Sym \ft \otimes \C[\hbar] \rightarrow \C[\hbar] $ which we denote by $ y \mapsto y(\lambda) $.
For any $ \fg$-module $V$, we obtain a module $ V_\hbar $ over $ \Uhg $ by taking the Rees construction (with respect to the trivial filtration on $ V $).  If $ V $ is indecomposable (for example a simple module or a Verma module), then 
every $ a \in Z(\Uhg) $ acts on $V_\hbar$ by some scalar 
$ a(V) \in \C[\hbar] $.  The resulting map $ a \mapsto a(V) $ is a graded $\C[\hbar]$-algebra map $ Z(\Uhg) \rightarrow \C[\hbar] $.
Let $ y \mapsto y_\rho $ 
be the graded $ \C[\hbar]$-algebra automorphism of $ \Sym \ft \otimes \C[\hbar] $ defined by putting $ x_\rho = x - \langle \rho, x \rangle \hbar $
for all $x\in \ft$.  In particular, for any $ \lambda \in \ft^* $ and $y\in \Sym \ft \otimes \C[\hbar] $, we have $ y_\rho(\lambda) = y(\lambda - \rho)$.

We will refer to a finite-dimensional irreducible $\fg$-module as a {\bf \boldmath{$G$}-irrep}.  
Such modules are classified by dominant weights; for any dominant weight $ \lambda\in\ft^*_\Z$, 
let $V(\lambda) $ be the $G$-irrep of highest weight $ \lambda $.
The {\bf algebraic Harish-Chandra map} is the unique graded $ \C[\hbar]$-algebra map $$\xiHC : Z(\Uhg) \rightarrow \Sym \ft \otimes \C[\hbar]$$ 
with the property that, for any dominant weight $ \lambda $ and any $ a \in Z(\Uhg)  $, $ \xiHC(a)(\lambda+ \rho) = a(V(\lambda))$.

Recall from Section \ref{sec:specializations} that we have
$$B(\Uhg) := (\Uhg)_0\Big{/} \sum_{\langle\mu,\xi\rangle > 0}
\left\{a b \mid a \in (\Uhg)_\mu, b \in (\Uhg)_{-\mu}\right\}.$$
We have a natural map $\psi: \Sym \ft \otimes \C[\hbar] \rightarrow B(\Uhg) $ coming from the inclusion $ \ft \rightarrow \fg $.  This map and the algebraic Harish-Chandra map are close to being mutually inverse isomorphisms.  More precisely, we have the following standard results, see for example \cite[Section 23.3]{Humphreys}.
\begin{proposition} \label{BSymft}
The maps $\varphi$ and $\psi$ have the following properties.
\begin{enumerate}
\item The map $ \xiHC $ is injective with image $ (\Sym \ft)^W \otimes \C[\hbar] $.
\item The map $\psi $ is an isomorphism.  
\item For any element $ a  \in Z(\Uhg) \subset B(\Uhg)$, we have $ \psi(\xiHC(a)_\rho) = a$.
\end{enumerate}
\end{proposition}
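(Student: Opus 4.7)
The plan is to reduce all three statements to classical facts about $U\fg$, adapted to the $\hbar$-graded setting via the Rees construction. I introduce two PBW projections
\[
\gamma_0, \gamma_B : (\Uhg)_0 \twoheadrightarrow \Sym\ft \otimes \C[\hbar],
\]
where $\gamma_0$ picks out the pure-Cartan component in the PBW ordering $(\mathfrak{n}^-,\ft,\mathfrak{n}^+)$ and $\gamma_B$ the pure-Cartan component in the opposite ordering $(\mathfrak{n}^+,\ft,\mathfrak{n}^-)$.

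For part (1), the standard computation shows that any $a \in Z(\Uhg)$ acts on the highest-weight vector $v_\lambda$ of the Verma module $M(\lambda)$ by the scalar $\gamma_0(a)(\lambda)$, since the only terms in the PBW expansion of $a$ which do not kill $v_\lambda$ are the pure-Cartan ones. Because $V(\lambda)$ is a quotient of $M(\lambda)$ for dominant $\lambda$, the defining property $\varphi(a)(\lambda+\rho) = a(V(\lambda))$ translates into the identity $\varphi(a) = (\gamma_0(a))_\rho$. The classical Harish-Chandra isomorphism states that $\gamma_0|_{Z(\Uhg)}$ is injective onto $(\Sym\ft)^{W,\,\cdot} \otimes \C[\hbar]$, the invariants for the dot-action $w \cdot \mu = w(\mu+\rho) - \rho$; since the automorphism $y \mapsto y_\rho$ intertwines the dot-action with the ordinary $W$-action, $\varphi$ is injective onto $(\Sym\ft)^W \otimes \C[\hbar]$.

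For part (2), a direct PBW analysis of $B(\Uhg)$ using the ordering $(\mathfrak{n}^+,\ft,\mathfrak{n}^-)$ suffices. Any $a \in (\Uhg)_0$ expands uniquely as
\[
a = \sum_\mu e_\mu h_\mu f_\mu, \qquad e_\mu \in (U_\hbar\mathfrak{n}^+)_\mu,\; h_\mu \in \Sym\ft \otimes \C[\hbar],\; f_\mu \in (U_\hbar\mathfrak{n}^-)_{-\mu},
\]
where $\mu$ runs over nonnegative integer combinations of positive roots. For $\mu \neq 0$ we have $\langle \mu,\xi\rangle > 0$, so the summand $e_\mu h_\mu f_\mu = e_\mu \cdot (h_\mu f_\mu) \in (\Uhg)_\mu \cdot (\Uhg)_{-\mu}$ vanishes in $B(\Uhg)$. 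Only the $\mu=0$ summand survives, and this is precisely the image of $\psi$; thus $\psi$ admits a two-sided inverse and is an isomorphism.

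For part (3), by part (2) the identity $\psi(\varphi(a)_\rho) = a$ in $B(\Uhg)$ is equivalent to the polynomial identity $\gamma_B(a) = \varphi(a)_\rho$, which in light of part (1) reads
\[
\gamma_B(a)(\mu) = \gamma_0(a)(\mu - 2\rho).
\]
I verify this by combining two classical identities. First, for any dominant integral $\lambda$, the longest Weyl element $w_0$ sends the highest weight of the finite-dimensional module $V(\lambda)$ to its lowest weight $w_0\lambda$; evaluating the central character of $a$ on the one-dimensional highest-weight and lowest-weight spaces gives $\gamma_0(a)(\lambda) = \gamma_B(a)(w_0\lambda)$, a polynomial identity that then extends to all $\lambda$. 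Second, by part (1) $\gamma_0(a)$ is dot-invariant, so $\gamma_0(a)(\mu) = \gamma_0(a)(w_0 \cdot \mu) = \gamma_0(a)(w_0\mu - 2\rho)$; substituting $\mu \mapsto w_0\mu$ then yields $\gamma_0(a)(w_0\mu) = \gamma_0(a)(\mu - 2\rho)$. Combining these gives $\gamma_B(a)(\mu) = \gamma_0(a)(\mu - 2\rho)$, completing the proof. The principal difficulty is purely notational: one must track with care which $\rho$-shift arises from the definition of $\varphi$ versus which arises from comparing the two opposite PBW orderings.
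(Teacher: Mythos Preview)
The paper itself does not give a proof of this proposition; it simply labels the three statements as ``standard results'' and cites Humphreys \cite[Section 23.3]{Humphreys}. Your proposal supplies exactly the kind of PBW/Harish-Chandra argument one finds in that reference, so there is nothing to compare against on the paper's side. Parts (1) and (3) are correct as written: the identification $\varphi(a)=(\gamma_0(a))_\rho$ is right, and your computation $\gamma_B(a)(\mu)=\gamma_0(a)(w_0\mu)=\gamma_0(a)(\mu-2\rho)$ via the lowest-weight vector of $V(\lambda)$ and dot-invariance of $\gamma_0(a)$ is a clean way to get the $-2\rho$ shift.

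There is one genuine gap in your argument for (2). You show that for $a\in(\Uhg)_0$ every PBW monomial with $\mu\neq 0$ lies in the ideal $I:=\sum_{\langle\mu,\xi\rangle>0}(\Uhg)_\mu(\Uhg)_{-\mu}$, hence $a\equiv \gamma_B(a)\pmod I$; this gives surjectivity of $\psi$. But the sentence ``thus $\psi$ admits a two-sided inverse'' does not follow: for $\gamma_B$ to descend to an inverse $B(\Uhg)\to\Sym\ft\otimes\C[\hbar]$ you need the reverse inclusion $\gamma_B(I)=0$, equivalently $I\cap(\Sym\ft\otimes\C[\hbar])=0$, and you have not argued this. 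The fix is short: for $\langle\mu,\xi\rangle>0$, every PBW monomial (in your ordering) appearing in an element of $(\Uhg)_\mu$ has nontrivial $\mathfrak{n}^+$-part on the left by the weight count, so $(\Uhg)_\mu\subset (U_\hbar\mathfrak{n}^+)_+\cdot\Uhg$ and hence $I\subset (U_\hbar\mathfrak{n}^+)_+\cdot\Uhg$. By PBW this left ideal meets $\Sym\ft\otimes\C[\hbar]$ trivially, giving injectivity of $\psi$. With that one line added, your proof is complete.
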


By Proposition \ref{BSymft}(1), we may use the algebraic Harish-Chandra map 
$ \xiHC $ to endow $ \Uhg $, $ B(\Uhg)$, and other related objects with an action of $ (\Sym \ft)^W \otimes \C[\hbar]$.

\begin{example}
Consider $ \fg = \sl_2 $.  Then $ Z(\Uhg) = \C[C, \hbar] $ where 
$$ C := 2EF + 2FE + H^2 + \hbar^2 = 4FE + H^2 + 2\hbar H + \hbar^2 = 4EF + H^2 - 2\hbar H + \hbar^2.$$  
We have $\Sym \ft = \C[H] $ and $ W = S_2 $ acts by negating $H$, so $ (\Sym \ft)^W \otimes \C[\hbar] = \C[H^2, \hbar] $.
Identify $\ft^*_\Z$ with $\Z$ by sending $\rho$ to 1.  Then we have
$$\hbar^2(n+1)^2 = C(V(n)) = \xiHC(C)(n+1),$$
which implies that $ \xiHC(C) = H^2$.
In $ B(U_{\hspace{-1pt}\hbar}\hspace{1pt} \sl_2) $, the element $4EF$ goes to 0, so the image of $ C $ in $ B(U_{\hspace{-1pt}\hbar}\hspace{1pt} \sl_2) $ is $ (H-\hbar)^2 $.
\end{example}

\subsection{Equivariant Hikita}
Let $G^!$ be the Langlands dual of $G$. 
  Let $$X^! := \mathcal{N}^!\and \tX^! := T^*(G^!/B^!).$$
  Since $ G $ was assumed to be of adjoint type, we see that $G^!$ is simply-connected, so $ G^! $ will typically contain a finite centre and thus not be equal to the automorphism of $ X^!$.  However, this will not affect the equivariant cohomology (over $ \C $), so we will work with $ G^! $ 
  rather than $\Aut(X^!)$ in order to simplify our notation.
  
   For any $\la \in \ft_\Z = (\ft^!)_\Z^*$, let $L(\la) := G^! \times \C_{\lambda} / B^!$ be the associated line bundle on $G^!/B^!$. This line bundle carries a unique $G^!$-equivariant structure and therefore also a canonical $T^!$-equivariant structure.
We endow it with a $G^!\times\cs$-equivariant structure by letting $\cs$ act with weight $ \langle 2\rho,\la\rangle \in \Z$,
and we let $\tilde L(\la)$ denote the pullback of $L(\la)$ to $\tX^!$.  (This non-standard choice of $\cs$-equivariant structure on the line bundle will be necessary later for the equivariant Hikita isomorphism.)

We obtain an isomorphism $ \ft_\Z = (\ft^!)_\Z^* \cong H^2(\tX^!; \Z)$ by sending $\lambda$ to the (non-equivariant) Chern class of  $\tilde L(\la)$.  Moreover, the map taking $\la$ to the $T^!\times\cs$-equivariant first Chern class of $\tilde L(\la)$ provides a splitting of the 
cohomology exact sequence \eqref{eq:coh-exact}.  In particular, we have a canonical isomorphism
$$
H^2_{T^! \times \cs}(\tX^!; \C) \cong  H^2(\tX^!; \C)  \oplus H^2_{T^!\times\cs}(pt; \C) \cong \ft \oplus \C\hbar \oplus \ft.
$$
Taking symmetric algebras and $W$-invariants, we also have
$$
\Big(\Sym H^2_{T^! \times \cs}(\tX^!; \C)\Big)^W \cong \Sym \ft \otimes \C[\hbar] \otimes (\Sym \ft)^W.$$

\begin{proposition} \label{HSymft}
Consider the graded $\C[\hbar]$-algebra homomorphism $$\cohpsi : \Sym\ft\otimes\C[\hbar]\to H^*_{G^!\times\cs}(\tX^!; \C)$$
taking $\la \ft_\Z$ to the $G^!\times\cs$-equivariant first Chern class of $\tilde L(\la)$.
\begin{enumerate}
\item This map is an isomorphism.
\item
The inclusion 
$$(\Sym \ft)^W \otimes \C[\hbar] \cong H^*_{G^! \times \cs}(pt; \C) \rightarrow H^*_{G^! \times \cs}(\tX^!; \C) \cong \Sym \ft \otimes \C[\hbar]$$
takes $  y \in (\Sym \ft)^W \otimes \C[\hbar] $ to $ y_\rho \in \Sym \ft \otimes \C[\hbar]$.
\end{enumerate}
\end{proposition}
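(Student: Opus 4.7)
The plan is to reduce the computation on $\tX^!=T^*(G^!/B^!)$ to one on $G^!/B^!$, and then to recognize Proposition \ref{HSymft} as a twisted version of the classical Borel description of equivariant cohomology of the flag variety.

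First, I would observe that the bundle projection $\tX^!\to G^!/B^!$ is $(G^!\times\cs)$-equivariant when $\cs$ acts on $G^!/B^!$ trivially, and since it is a vector bundle it induces an equivariant deformation retract. Thus $H^*_{G^!\times\cs}(\tX^!;\C)\cong H^*_{G^!\times\cs}(G^!/B^!;\C)\cong H^*_{G^!}(G^!/B^!;\C)\otimes\C[\hbar]$. By the standard Borel isomorphism $H^*_{G^!}(G^!/B^!;\C)\cong H^*_{B^!}(pt;\C)\cong\Sym\ft$, so the right-hand side is $\Sym\ft\otimes\C[\hbar]$. Under this identification, the canonical $G^!$-equivariant Chern class of $L(\la)$ (with trivial $\cs$-structure) goes to $\la\in\ft_\Z\subset\Sym\ft$, since the restriction of $L(\la)$ to the basepoint $eB^!/B^!$ is the $B^!$-character $\la$. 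Let me call this the \emph{standard} isomorphism $\psi_\mathrm{std}:\Sym\ft\otimes\C[\hbar]\xrightarrow{\sim} H^*_{G^!\times\cs}(\tX^!;\C)$.

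The map $\cohpsi$ differs from $\psi_\mathrm{std}$ only in that $L(\la)$ has been given a non-trivial $\cs$-equivariant structure of weight $\langle 2\rho,\la\rangle$. Since $\cs$ acts trivially on $G^!/B^!$, this shifts the equivariant first Chern class by a term proportional to $\hbar$ that is linear in $\la$; with the normalization convention under which the weight-$2$ character of $\cs$ equals $\hbar$ (the weight of the symplectic form), this shift is exactly $\langle\rho,\la\rangle\hbar$. Thus $\cohpsi(\la)=\psi_\mathrm{std}(\la+\langle\rho,\la\rangle\hbar)$, i.e.\ $\cohpsi=\psi_\mathrm{std}\circ\Phi$ where $\Phi$ is the graded $\C[\hbar]$-algebra automorphism of $\Sym\ft\otimes\C[\hbar]$ with $\Phi(x)=x+\langle\rho,x\rangle\hbar$ for $x\in\ft$. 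Because $\Phi$ is manifestly an algebra isomorphism (it is unipotent with respect to the $\hbar$-filtration), assertion (1) follows.

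For assertion (2), the pullback $H^*_{G^!\times\cs}(pt;\C)\to H^*_{G^!\times\cs}(\tX^!;\C)$ is, via Chevalley's theorem and $\psi_\mathrm{std}$, the usual inclusion of $W$-invariants $(\Sym\ft)^W\otimes\C[\hbar]\hookrightarrow\Sym\ft\otimes\C[\hbar]$ sending $y\mapsto y$. Composing with $\cohpsi^{-1}=\Phi^{-1}\circ\psi_\mathrm{std}^{-1}$ therefore sends $y\in(\Sym\ft)^W\otimes\C[\hbar]$ to $\Phi^{-1}(y)$. Since $\Phi^{-1}$ acts on $x\in\ft$ by $x\mapsto x-\langle\rho,x\rangle\hbar=x_\rho$, we get $\Phi^{-1}(y)=y_\rho$, which is exactly the claim.

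The only real obstacle is pinning down the coefficient of $\hbar$ arising from the weight-$\langle 2\rho,\la\rangle$ twist: this must be $\langle\rho,\la\rangle\hbar$ (not $\langle 2\rho,\la\rangle\hbar$) to match the definition $x_\rho=x-\langle\rho,x\rangle\hbar$. This factor-of-two comes from the paper's normalization of $\hbar$ as the weight of the symplectic form under the conical $\cs$-action (weight $2$), so a $\cs$-character of weight $w$ contributes $(w/2)\hbar$ to the equivariant first Chern class. With that convention in place, the entire argument is a routine unwinding of the Borel isomorphism, and the only substantive idea is that the chosen non-standard $\cs$-equivariant structure on $L(\la)$ is precisely what turns the tautological inclusion of $W$-invariants into its $\rho$-shifted version.
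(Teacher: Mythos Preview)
Your argument is correct and follows essentially the same route as the paper: reduce to $G^!/B^!$ via the equivariant homotopy equivalence, invoke the Borel isomorphism $H^*_{G^!}(G^!/B^!;\C)\cong\Sym\ft$, and then observe that the nonstandard $\cs$-equivariant structure on $L(\la)$ produces exactly the $\rho$-shift. The paper's own proof is extremely terse on part (2) (``follows from the way in which we defined the action of $\cs$''), whereas you spell out the mechanism via the automorphism $\Phi$ and explicitly address why the weight-$\langle 2\rho,\la\rangle$ twist contributes $\langle\rho,\la\rangle\hbar$ rather than $\langle 2\rho,\la\rangle\hbar$; this is a genuine clarification, consistent with the paper's convention (visible for instance in the hypertoric section, where $\hbar/2$ is the primitive character of the conical $\cs$).
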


\begin{proof}
The first statement follows from the fact that the projection from $T^*(G^!/B^!)$ to $G^!/B^!$ is a homotopy equivalence,
and we have 
$$H^*_{G^!\times\cs}(G^!/B^!; \C) \cong H^*_{G^!\times B^!\times\cs}(G^!; \C) \cong H^*_{B^!\times\cs}(pt; \C) \cong \Sym\ft\otimes\C[\hbar].$$ 
The second statement follows from the way in which we defined the action of $\cs$ on $\tilde L(\la)$.\end{proof}

We now check that the $W$-invariant version of the equivariant Hikita conjecture (Remark \ref{Nak-Hik}) holds for the Springer resolution.  

\begin{proposition}
There is an isomorphism $B(\Uhg)\cong H_{G^! \times \cs}^*(\tX^!)$ of graded algebras over the ring 
$ \Sym \ft \otimes \C[\hbar] \otimes (\Sym \ft)^W $.
\end{proposition}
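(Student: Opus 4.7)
The plan is to combine the two isomorphisms just established to identify both sides with $\Sym\ft\otimes\C[\hbar]$ and then observe that the identifications match up. Concretely, I would define
$$\Phi := \cohpsi\circ\psi^{-1}: B(\Uhg)\longrightarrow H^*_{G^!\times\cs}(\tX^!;\C).$$
By Proposition \ref{BSymft}(2) the map $\psi$ is a graded $\C[\hbar]$-algebra isomorphism, and by Proposition \ref{HSymft}(1) so is $\cohpsi$; hence $\Phi$ is automatically a graded $\C[\hbar]$-algebra isomorphism. The content of the proposition is therefore reduced to checking that $\Phi$ intertwines the action of $\Sym\ft\otimes\C[\hbar]\otimes(\Sym\ft)^W$ on the two sides.

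For the $\Sym\ft\otimes\C[\hbar]$ factor, the action on $B(\Uhg)$ is (by definition) given by multiplication through $\psi$, while the action on $H^*_{G^!\times\cs}(\tX^!;\C)$ is given by multiplication through $\cohpsi$. Compatibility under $\Phi$ is then tautological. The real work is with the $(\Sym\ft)^W$ factor, whose action on the two sides is specified quite differently: on $B(\Uhg)$ it comes from the inclusion $(\Sym\ft)^W\otimes\C[\hbar]\cong Z(\Uhg)\subset B(\Uhg)$ via the algebraic Harish-Chandra map $\xiHC$, while on $H^*_{G^!\times\cs}(\tX^!;\C)$ it comes from the natural $H^*_{G^!\times\cs}(pt;\C)$-module structure.

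The observation that makes the two actions match is that, under the identifications $\psi$ and $\cohpsi$ with $\Sym\ft\otimes\C[\hbar]$, \emph{both} actions of $(\Sym\ft)^W\otimes\C[\hbar]$ become the $\rho$-shifted inclusion $y\mapsto y_\rho$. On the cohomology side, this is precisely the content of Proposition \ref{HSymft}(2). On the algebra side, for any $a\in Z(\Uhg)$ Proposition \ref{BSymft}(3) says that the image of $a$ in $B(\Uhg)$ equals $\psi(\xiHC(a)_\rho)$; equivalently, the action of $a\in Z(\Uhg)$ on $B(\Uhg)$ pulls back through $\psi$ to multiplication by $\xiHC(a)_\rho\in\Sym\ft\otimes\C[\hbar]$. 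Since $\xiHC$ identifies $Z(\Uhg)$ with $(\Sym\ft)^W\otimes\C[\hbar]$, this is exactly the same formula $y\mapsto y_\rho$. Therefore the $(\Sym\ft)^W\otimes\C[\hbar]$ actions on the two sides coincide under $\Phi$.

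The main (minor) obstacle is simply bookkeeping: making sure that the two \emph{a priori} different-looking actions of $(\Sym\ft)^W\otimes\C[\hbar]$ really do match under $\Phi$, and that the common $\rho$-shift cancels out in the identification. No new input is required beyond Propositions \ref{BSymft} and \ref{HSymft}; the proposition is essentially their combination once the gradings and the $\rho$-shifts have been tracked consistently.
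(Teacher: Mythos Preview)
Your proposal is correct and follows essentially the same approach as the paper: identify both sides with $\Sym\ft\otimes\C[\hbar]$ via $\psi$ and $\cohpsi$ (Propositions \ref{BSymft}(2) and \ref{HSymft}(1)), then check that the $(\Sym\ft)^W\otimes\C[\hbar]$-action is given by the $\rho$-shift $y\mapsto y_\rho$ on each side (Propositions \ref{BSymft}(3) and \ref{HSymft}(2)). The paper's proof is just a more compressed version of exactly this argument.
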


\begin{proof}
Propositions \ref{BSymft}(2) and \ref{HSymft}(1) tell us
that both rings are isomorphic to $\Sym\ft\otimes\C[\hbar]$.
The action of $\Sym\ft$ on both rings is the obvious one, but the action of $\C[\hbar] \otimes (\Sym \ft)^W$ is not so obvious.
Propositions \ref{BSymft}(3) and \ref{HSymft}(2) tell us that, for both algebras,
an element $ y \in  (\Sym \ft)^W \otimes \C[\hbar] $ is mapped to $ y_\rho \in \Sym \ft \otimes \C[\hbar]$.
\end{proof}

\subsection{Differential operators}\label{sec:do}
By Langlands duality we have $\ft^*_\Z = \ft^!_\Z \cong H_2(\tX^!; \Z)$. By \cite[Theorem 1.1]{BMO}, under this identification, the positive K\"ahler roots $\Delta^!_+\subset  H_2(\tX^!; \Z)$ coincide with 
the positive roots of $G$.  In particular, this means that $\Delta^!_+ = \Sigma_+$
and $$\Freg^! = \C[q^\a, (1- q^{\alpha})^{-1} \mid \alpha \in \Sigma_+] = \Sreg.$$

Let $D_\hbar(\Treg)$ be the Rees algebra of the ring of differential operators on $\Treg := T\cap \Spec \Sreg$, filtered by order.
Applying the Rees construction to the action of differential operators on functions, we obtain an action of
$ D_\hbar(\Treg) $ on $ \cO(\Treg)\otimes\C[\hbar] $.
Each element $ x \in \ft $ gives rise to an invariant vector field on $ \Treg $, which induces a map 
$ \iota : \Sym \ft \otimes \C[\hbar] \rightarrow D_\hbar(\Treg)$, and we have
\begin{equation}\label{iota}\iota(y)\cdot q^\lambda = y(\lambda) q^\lambda\end{equation}
for all $ y \in \Sym \ft \otimes \C[\hbar]$ and $\lambda \in \ft^*_\Z $.

Let $ \Dreg $ be the $ \C[\hbar]$-subalgebra of $ D_\hbar(\Treg) $ generated by the images of $ \iota $ and $ \Sreg \subset \cO(\Treg) $. 
We then have a graded vector space isomorphism $\Dreg \cong \Sreg \otimes \Sym \ft \otimes \C[\hbar] $.
We can also regard $ \Dreg $ as a subalgebra of $ \Rreg^W \cong (\Ereg^!)^W$, and we have graded algebra isomorphisms
$$
\Rreg^W \cong \Dreg \otimes (\Sym \ft)^W \cong (\Ereg^!)^W.
$$

\subsection{The geometric Harish-Chandra map}

We thank Sam Gunningham for help with the proof of the following lemma.

\begin{lemma}\label{Gun}
Let $ X$ and $Y $ be smooth affine varieties over $ \C $ and let $X \rightarrow Y $ be a dominant morphism of relative dimension zero.  
Suppose that $ d \in D(X) $ and $ d\cdot f = 0 $ for all $ f \in \mathcal{O}(Y) \subset\cO(X) $.  Then $ d = 0 $.
\end{lemma}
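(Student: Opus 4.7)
The plan is to induct on the order $n$ of $d$, with base case $n=0$: here $d$ is multiplication by some $g\in\mathcal{O}(X)$, and $g = d\cdot 1 = 0$ since $1\in \mathcal{O}(Y)$. For the inductive step, fix $d$ of order $n\geq 1$ satisfying the hypothesis. The first move is the standard commutator trick: for any $f\in\mathcal{O}(Y)$ the operator $[d,f]$ has order at most $n-1$, and for any $g\in\mathcal{O}(Y)$,
\[
[d,f]\cdot g \;=\; d(fg) \,-\, f\cdot d(g) \;=\; 0,
\]
since $fg\in\mathcal{O}(Y)$. The inductive hypothesis then forces $[d,f]=0$, so $d$ commutes with every element of $\mathcal{O}(Y)\subset\mathcal{O}(X)$.

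Next I would pass to the principal symbol $\sigma_n(d)\in\mathcal{O}(T^*X)$, which is homogeneous of degree $n$ on each cotangent fibre. The vanishing $[d,f]=0$ translates into the Poisson identity $\{\sigma_n(d),\phi^*f\}=0$ for all $f\in\mathcal{O}(Y)$, where $\phi\colon X\to Y$ is the given morphism; in local coordinates this Poisson bracket computes the fibrewise derivative of $\sigma_n(d)$ along the vertical direction $d(\phi^*f)|_x\in T^*_xX$. The geometric input is that a dominant morphism of smooth varieties of relative dimension zero over $\C$ is generically \'etale, so there is a dense open $U\subset X$ on which the differentials $d(\phi^*f)|_x$, for $f\in\mathcal{O}(Y)$, span $T^*_xX$. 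It follows that $\sigma_n(d)$ is constant along each cotangent fibre over $U$, and being homogeneous of positive degree it must vanish there, hence on all of $T^*X$. Thus $d$ actually has order less than $n$, and the inductive hypothesis finishes the proof.

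The only step that needs real care is the translation between the commutator and the Poisson bracket of symbols, together with the invocation of generic \'etaleness; both are completely standard in characteristic zero, so I do not foresee any serious obstacle.
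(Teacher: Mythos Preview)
Your proof is correct, and the first half (induction, base case, and the commutator trick yielding $[d,f]=0$ for all $f\in\mathcal{O}(Y)$) matches the paper exactly.

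The second half, however, takes a genuinely different route. After establishing that $d$ commutes with $\mathcal{O}(Y)$, the paper passes to function fields: $d$ then acts $\mathcal{K}(Y)$-linearly on $\mathcal{K}(X)$, hence lies in $D(\mathcal{K}(X)/\mathcal{K}(Y))$; since the map has relative dimension zero, $\mathcal{K}(X)/\mathcal{K}(Y)$ is algebraic, so there are no nonzero $\mathcal{K}(Y)$-linear derivations, and $D(\mathcal{K}(X)/\mathcal{K}(Y))=\mathcal{K}(X)$. Thus $d$ has order zero and the base case finishes. You instead pass to the principal symbol on $T^*X$, use generic \'etaleness to see that the pullbacks $d(\phi^*f)$ span $T^*_xX$ on a dense open, conclude that $\sigma_n(d)$ is fibrewise constant there, and hence zero by homogeneity; this drops the order and closes the induction. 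Both arguments rest on the same underlying fact---relative dimension zero plus dominance in characteristic zero gives generic \'etaleness, equivalently a separable extension of function fields---but yours is packaged on the cotangent bundle while the paper's is packaged in relative differential operators. Your argument has the advantage of staying inside $D(X)$ and $\mathcal{O}(T^*X)$ without ever localizing to the generic point; the paper's has the advantage of avoiding symbols and Poisson brackets altogether.
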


\begin{proof}
We will prove the lemma by induction on the order of $ d $.  When the order is zero, $ d $ is multiplication by a function, so the result holds.
Now suppose that the lemma holds for differential operators of order at most $k-1$, and
let $ d $ be a differential operator of order at most $ k $ such that $d\cdot f = 0$ for all $f\in\cO(Y)$.  For any $ f \in \mathcal{O}(Y)$, the commutator $ [d,f] \in D(X) $ is a differential operator of order at most $ k-1$.  For any $ g \in \mathcal{O}(Y) $, we have $$ [d,f](g) = d\cdot (fg) - fd\cdot g = 0. $$
Thus our inductive hypothesis tells us that $ [d,f] = 0 $.

The ring $D(X)$ acts faithfully on the function field $\mathcal{K}(X)$, 
and the above paragraph implies that the element $d\in D(X)$ acts $\mathcal{K}(Y)$-linearly,
and thus can be regarded as an element of $ D(\mathcal{K}(X)/\mathcal{K}(Y))$.  By the smoothness assumption, $ D(\mathcal{K}(X)/\mathcal{K}(Y))$ is generated by $ \mathcal{K}(X) $ and $\mathcal{K}(Y)$-linear derivations of $ \mathcal{K}(X)$.  
Since our map has relative dimension zero,
$ \mathcal{K}(Y) /\mathcal{K}(X)$ is an algebraic extension, which implies that there are no such derivations.  Thus $ d \in \mathcal{K}(X) $ and the result follows.
\end{proof}

\begin{corollary} \label{diszero}
 If $ d \in D_\hbar(\Treg) $ and $ d\cdot \chi_V = 0 $ for all $G$-irreps $V$, then $ d = 0 $.
\end{corollary}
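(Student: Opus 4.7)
The plan is to strip off the Rees parameter, reduce to a classical spanning result, and then invoke Lemma \ref{Gun}.

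Writing $d = \sum_m \hbar^m d_m$ with each $d_m \in D(\Treg)$ a differential operator of order at most $m$, the hypothesis $d \cdot \chi_V = 0$ in $\cO(\Treg)[\hbar]$ decouples into the conditions $d_m \cdot \chi_V = 0$ for every $m$, so it suffices to prove that an ordinary differential operator $D \in D(\Treg)$ killing every irreducible character is zero. By the Weyl character formula, the characters $\chi_{V(\la)}$, as $\la$ runs over the dominant weights in the character lattice of $T$, form a $\C$-basis of $\cO(T)^W$. Since $D$ is $\C$-linear, $D$ annihilates the subring $\cO(T)^W \subset \cO(\Treg)$.

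I would then apply Lemma \ref{Gun}. The cleanest choice of target is $\Treg/W$: because $W$ acts freely on $\Treg$ (every reflection hyperplane $\{1 - q^\a = 0\}$ has been removed), both $\Treg$ and $\Treg/W$ are smooth and the quotient map is a finite \'etale Galois cover, hence dominant and of relative dimension zero. The catch is that Lemma \ref{Gun} requires $D$ to annihilate the pullback ring $\cO(\Treg/W) = \cO(\Treg)^W$, whereas our hypothesis only directly gives the smaller subring $\cO(T)^W \subsetneq \cO(\Treg)^W$.

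This last step is the main obstacle. To bridge the gap, I would run the inductive argument from the proof of Lemma \ref{Gun}: the commutator $[D, f]$ for $f \in \cO(T)^W$ is a differential operator of strictly lower order that continues to annihilate $\cO(T)^W$, so it vanishes by induction on the order of $D$. Hence $D$ commutes with multiplication by every element of $\cO(T)^W$, and therefore, after passing to fraction fields, with every element of $\mathcal{K}(T)^W$. Since $\cO(\Treg)^W$ is a localization of $\cO(T)^W$ and therefore embeds into $\mathcal{K}(T)^W$, for any $g \in \cO(\Treg)^W$ we have $D(g) = D(g \cdot 1) = g \cdot D(1) = 0$. Thus $D$ annihilates $\cO(\Treg)^W$, and Lemma \ref{Gun} applied to $\Treg \to \Treg/W$ forces $D = 0$.
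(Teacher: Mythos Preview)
Your proof is correct, but you take a detour that the paper avoids. The paper applies Lemma \ref{Gun} directly to the composite $\Treg \to T \to T/W$: since the characters $\chi_V$ form a basis of $\cO(T/W) = \cO(T)^W$, the hypothesis of the lemma is satisfied immediately and the conclusion follows in one line. Your choice of target $\Treg/W$ is motivated by wanting $Y$ to be visibly smooth, and this forces the extra bridging argument from $\cO(T)^W$ to $\cO(\Treg)^W$. That bridging argument is valid, but it essentially reproves the inductive core of Lemma \ref{Gun}, so by the time you reach your final invocation of the lemma you have already done all the work. Note also that the smoothness of $Y$ is not actually used in the proof of Lemma \ref{Gun} (only the smoothness of $X$ enters, to control $D(\mathcal{K}(X)/\mathcal{K}(Y))$), so the paper's more direct application is sound even without checking that $T/W$ is smooth. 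Your explicit reduction from $D_\hbar(\Treg)$ to $D(\Treg)$ by separating $\hbar$-coefficients is a useful clarification that the paper leaves implicit.
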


\begin{proof}
 We apply the Lemma \ref{Gun} to the map $ \Treg \rightarrow T \rightarrow T/W $.  Since the characters of irreducible representations form a basis 
 for $ \cO(T/W)$, the result follows.
\end{proof}

By Corollary \ref{diszero}, there is a unique graded $\C[\hbar]$-algebra homomorphism $$ \GHC : Z(\Uhg) \rightarrow D_\hbar(\Treg) $$
with the property that, for any $G$-irrep $ V $ and any $ a \in Z(\Uhg)  $, we have $ \GHC(a)(\chi_V) = a(V) \chi_V $.
We call this homomorphism the {\bf geometric Harish-Chandra map}.
Let $$\delta := \prod_{\alpha \in \Delta_+} \left(q^{\alpha} -1\right)$$ denote the Weyl denominator.
The algebraic Harish-Chandra map $\xiHC $ and the geometric Harish-Chandra map $\GHC $ are related by the following lemma.  

\begin{lemma} \label{remHC}
 For any $ a \in Z(\Uhg) $, we have $$
 \GHC(a) = \delta^{-1} \iota(\xiHC(a)_\rho) \delta.$$
 In particular, the image of $ \GHC $ is contained in the subalgebra $ \Dreg \subset D_\hbar(\Treg) $.

\end{lemma}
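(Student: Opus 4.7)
The plan is to exploit Corollary \ref{diszero} to reduce the identity $\GHC(a) = \delta^{-1}\iota(\xiHC(a)_\rho)\delta$ to a statement about how both sides act on the characters $\chi_V$ of $G$-irreps. Since $\GHC(a)$ is characterized by $\GHC(a)\cdot\chi_V = a(V)\chi_V$, it suffices to show that
$$\iota(\xiHC(a)_\rho) \cdot (\delta\chi_V) = a(V) \cdot \delta\chi_V$$
for every $G$-irrep $V$, because if this holds then $\delta^{-1}\iota(\xiHC(a)_\rho)\delta$ acts on $\chi_V$ exactly as $\GHC(a)$ does, and Corollary \ref{diszero} forces the two operators to coincide.

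For $V = V(\lambda)$, I would combine the elementary identity $\delta = q^{2\rho}\prod_{\alpha>0}(1 - q^{-\alpha})$ with the Weyl character formula
$$\chi_{V(\lambda)} \cdot \prod_{\alpha > 0}(1 - q^{-\alpha}) = \sum_{w \in W} \sign(w)\, q^{w(\lambda+\rho)-\rho}$$
to obtain $\delta\chi_{V(\lambda)} = \sum_{w \in W}\sign(w)\, q^{\rho + w(\lambda+\rho)}$. Now equation \eqref{iota} combined with the defining identity $y_\rho(\mu) = y(\mu - \rho)$ yields
$$\iota(\xiHC(a)_\rho) \cdot q^{\rho + w(\lambda+\rho)} = \xiHC(a)\big(w(\lambda+\rho)\big) \cdot q^{\rho + w(\lambda+\rho)}.$$
Since $\xiHC(a) \in (\Sym\ft)^W \otimes \C[\hbar]$ is $W$-invariant, $\xiHC(a)(w(\lambda+\rho)) = \xiHC(a)(\lambda+\rho)$, and the defining property of the algebraic Harish-Chandra map identifies this last scalar with $a(V(\lambda))$. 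Summing over $w \in W$ gives the desired identity.

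For the ``in particular'' assertion, observe that $\xiHC(a)_\rho$ lies in $\Sym\ft \otimes \C[\hbar]$ because $y \mapsto y_\rho$ is an automorphism of that algebra, so $\iota(\xiHC(a)_\rho) \in \Dreg$ by the definition of $\Dreg$. The Weyl denominator $\delta$ is a product of elements $(q^\alpha - 1)$ with $\alpha \in \Sigma_+$, each of which differs from $(1 - q^\alpha)$ by a sign and is therefore a unit in $\Sreg \subset \Dreg$. Consequently $\delta^{\pm 1} \in \Dreg$ and the conjugate $\delta^{-1}\iota(\xiHC(a)_\rho)\delta = \GHC(a)$ lies in $\Dreg$ as well. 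The only subtlety in the argument is that the classical Weyl character formula must be read in the Rees/graded setting, but this is precisely what the definition of $y(\lambda) \in \C[\hbar]$ as a graded $\C[\hbar]$-algebra map accommodates, so this poses no real obstacle.
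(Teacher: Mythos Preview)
Your proposal is correct and follows essentially the same route as the paper's proof: both reduce via Corollary \ref{diszero} to checking the action on $\chi_{V(\lambda)}$, multiply through by $\delta$, invoke the Weyl character formula to write $\delta\chi_{V(\lambda)} = \sum_{w\in W}(-1)^{\ell(w)}q^{w(\lambda+\rho)+\rho}$, and then verify the identity summand-by-summand using Equation \eqref{iota}, the $W$-invariance of $\xiHC(a)$, and the defining property of the algebraic Harish-Chandra map. Your explicit justification of the ``in particular'' clause (that $\delta^{\pm 1}\in\Sreg\subset\Dreg$ and hence the conjugate lies in $\Dreg$) is a welcome addition that the paper leaves implicit.
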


\begin{proof}
We need to show that
$$\delta^{-1} \iota(\varphi(a)_\rho)\delta\cdot \chi_{V(\la)} = a(V(\la))\cdot \chi_{V(\la)}$$
for every dominant weight $\la$.  The Weyl character formula says that
$$
\chi_{V(\lambda)} = \sum_{w \in W} (-1)^{\ell(w)}\frac{q^{w(\lambda + \rho) + \rho}}{\delta},
$$
and therefore we need to show that
$$\iota(\varphi(a)_\rho)\cdot \sum_{w \in W} (-1)^{\ell(w)}q^{w(\lambda + \rho) + \rho} 
= a(V(\la)) \cdot \sum_{w \in W} (-1)^{\ell(w)}q^{w(\lambda + \rho) + \rho}.$$
We will prove the equality summand-by-summand. 
Equation \eqref{iota} tells us that
$$\iota(\varphi(a)_\rho)\cdot q^{w(\lambda + \rho) + \rho}
= \varphi(a)_\rho(w(\lambda + \rho) + \rho) \cdot q^{w(\lambda + \rho) + \rho}
= \varphi(a)(w(\lambda + \rho)) \cdot q^{w(\lambda + \rho) + \rho}.$$
By Weyl invariance of $ \xiHC(a) $, this is equal to
$\varphi(a)(\lambda + \rho) \cdot q^{w(\lambda + \rho) + \rho},$
which by the definition of $\xiHC$ is equal to $a(V(\la)) \cdot q^{w(\lambda + \rho) + \rho}.$
This concludes the proof.
\end{proof}

\subsection{The D-module of traces}
This section is devoted to computing the $\Rreg^W$-module
$$
\Mreg^W = \Sreg \otimes (\Uhg)_0 \Big{/} \sum_{\la\in\N\Sigma_+}\Sreg[\hbar]\cdot\left\{ 1\otimes a b - q^\lambda \otimes b a \mid a \in (\Uhg)_\lambda, b \in (\Uhg)_{-\lambda}\right\}.
$$
We begin by proving that, as a module over the subalgebra $\Dreg\subset\Rreg^W$, $\Mreg^W$ is isomorphic to the regular module.

\begin{theorem} \label{ComputationMreg}
The map $ \sigma : \Dreg \rightarrow \Mreg^W$ taking $ d\in\Dreg$ to $d \cdot (1 \otimes 1) \in \Mreg^W$ is an isomorphism of 
graded $ \Dreg $-modules.
\end{theorem}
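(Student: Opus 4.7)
The plan is to prove the map $\sigma$ is an isomorphism by establishing surjectivity and injectivity separately. The key inputs will be the PBW basis of $\Uhg$, the defining relations of $J$, the trace maps of Section \ref{sec:traces}, and Corollary \ref{diszero}.

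For surjectivity I would argue by induction on the Rees degree $m$ of elements $a \in (\Uhg)_0^m$, showing that $1 \otimes a$ lies in the $\Dreg$-submodule $\sigma(\Dreg)$. The base cases $m = 0$ and $m = 2$ are immediate, since $(\Uhg)_0^0 = \C[\hbar]$ and $(\Uhg)_0^2 = \ft \oplus \C\hbar$ sit inside $\iota(\Sym\ft \otimes \C[\hbar])$. For $m \geq 4$ I would expand $a$ in the PBW basis. Pure-Cartan monomials again lie in $\iota(\Sym\ft \otimes \C[\hbar])$; any other weight-zero PBW monomial has at least one root-vector factor, and letting $F_\beta$ be a leftmost lowering factor (with $\beta \in \Sigma_+$) writes the monomial as $M = F_\beta M'$ with $M' \in \scrA_\beta$. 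The $J^W$-relation
\[ 1 \otimes M' F_\beta - q^\beta \otimes F_\beta M' \in J^W, \]
combined with $[M', F_\beta] = \hbar d$ for some $d \in (\Uhg)_0^{m-2}$, rearranges to
\[ (1 - q^\beta)(1 \otimes M) = -\hbar (1 \otimes d), \]
so $1 \otimes M = -\tfrac{\hbar}{1 - q^\beta}(1 \otimes d)$. Since $(1 - q^\beta)^{-1} \in \Sreg$ because $\beta \in \Sigma_+$, and $1 \otimes d \in \sigma(\Dreg)$ by induction, this completes the inductive step.

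For injectivity I would use the trace maps. For each dominant weight $\lambda$, the finite-dimensional irreducible $G$-representation $V(\lambda)$ becomes a positively reasonable $\scrA$-module once $Z(\Uhg) = (\Sym\ft)^W$ is extended to $\Sym\ft$ via the lift $\lambda + \rho \in \ft^*$ of its Harish-Chandra central character. Its character $\chi_{V(\lambda)}$ is a finite Laurent polynomial in $\cO(\Treg) \otimes \C[\hbar]$ and so lies in $\Nreg$. Proposition \ref{lemma characters} then produces an $\Rreg$-linear map $\tr_{V(\lambda)}: \Mreg \to \Nreg$ with $\tr_{V(\lambda)}(1 \otimes 1) = \chi_{V(\lambda)}$. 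Granted that the action of $\Dreg \subset \Rreg^W$ on $\Nreg$ coincides with its action as differential operators in $D_\hbar(\Treg)$ (which follows from comparing $\iota(x) \cdot q^\mu = \hbar\langle \mu, x\rangle q^\mu$ in Section \ref{sec:do} with $x \cdot q^\mu = \mu(x) q^\mu$ in Section \ref{sec:traces}), one obtains $\tr_{V(\lambda)}(\sigma(d)) = d \cdot \chi_{V(\lambda)}$ for every $d \in \Dreg$. If $\sigma(d) = 0$ then $d \cdot \chi_{V(\lambda)} = 0$ for every dominant $\lambda$, and Corollary \ref{diszero} forces $d = 0$.

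The main obstacle I anticipate is this compatibility of the two actions of $\Dreg$, since the setup places $\Dreg$ inside both $\Rreg^W$ and $D_\hbar(\Treg)$ through different constructions, and one must verify that their actions on characters agree in the relevant sense. Once this is in place, both halves of the proof reduce to bookkeeping: the surjectivity step is a PBW reduction driven by the relations in $J$, and the injectivity step reduces directly to Corollary \ref{diszero}.
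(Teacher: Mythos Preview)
Your proposal is correct and follows essentially the same approach as the paper. Both prove surjectivity by induction on degree, using the defining relations of $J$ to express a non-Cartan weight-zero element in terms of a commutator of lower degree (you write $M = F_\beta M'$ and pull out a negative root vector on the left, whereas the paper writes $a = h + \sum_\alpha E_\alpha b_\alpha$ and pulls out a positive root vector; these are mirror images of the same computation). Both prove injectivity by applying the trace maps to $G$-irreps and invoking Corollary~\ref{diszero}; your added remarks about extending $V(\lambda)$ to an $\scrA$-module and about the compatibility of the two $\Dreg$-actions are details the paper leaves implicit, but you have identified them correctly and they present no real difficulty.
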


\begin{proof}
We begin by showing that $\sigma$ is surjective, which we will prove by induction on degree. 
Assume that $\sigma$ is surjective in all degrees less than $k$, and let $a\in(\Uhg)_0^k$. 
Write $ a = h + \sum_{\alpha \in \Delta_+} E_\alpha b_\alpha $, where $ h \in \Sym \ft $ has degree $k$ and $ b_\alpha \in (\Uhg)_{-\alpha}^{k-2}$.  It is clear that the image of $h$ in $\Mreg^W$ lies in the image of $\sigma$,
so it is enough to prove that the images of each $E_\alpha b_\alpha$ in $\Mreg^W$ lie in the image of $\sigma$, as well.

We know that there exists an element $c\in (\Uhg)_0^{k-2}$ such that $[b_\alpha, E_\alpha ] = \hbar c$.
In $\Mreg^W$, we have 
$$ E_\alpha b_\alpha = q^\alpha b_\alpha E_\alpha = q^\alpha \left(E_\alpha b_\alpha + [b_\a, E_\a]\right) 
= q^\alpha \left(E_\alpha b_\alpha + \hbar c\right),$$
and therefore
$$ E_\alpha b_\alpha = \frac{q^\alpha}{1-q^\alpha} \hbar c.
$$
By our inductive hypothesis, $ c $ lies in the image of $\sigma $, and thus so does  $E_\alpha b_\alpha$.  This proves surjectivity.

To show injectivity of $ \sigma $, recall from Proposition \ref{lemma characters} that, for any $G$-irrep $ V $, 
the map $ \tr_V : \Mreg^W \rightarrow \cO(\Treg)\otimes \C[\hbar] $ is a $ \Dreg $-module map.  
Suppose that $ d \in \Dreg $ and that $ \sigma(d) = 0 $.  Then for any $G$-irrep $ V $, we have
$$
0 =  \tr_V(0) = \tr_V(\sigma(d)) = \tr_V(d \cdot (1 \otimes 1)) = d\cdot \tr_V(1\otimes 1) = d\cdot \chi_V.
$$
Then Lemma \ref{diszero} tells us that $ d = 0 $.
 \end{proof}

It remains only to determine how $(\Sym \ft)^W \otimes \C[\hbar]\subset \Rreg^W$ acts on $\Mreg^W$.

\begin{lemma}\label{first}
For all $a\in Z(\Uhg)$ and $d\in \Dreg\cong \Mreg^W$, we have $a\cdot d = \GHC(a) d$.
\end{lemma}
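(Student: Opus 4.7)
My approach is to use Theorem \ref{ComputationMreg} to identify $\Mreg^W$ with $\Dreg$ as $\Dreg$-modules, reduce the lemma to the case $d = 1$ by exploiting the commutativity of $\xiHC(a) \in (\Sym \ft)^W$ with the $\Dreg$-action, and finally verify the base case via the trace injectivity of Corollary \ref{diszero}.

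First I would note that under the tensor product presentation $\scrA = \Uhg \otimes_{Z(\Uhg)} (\Sym\ft \otimes \C[\hbar])$ together with the algebraic Harish-Chandra isomorphism (Proposition \ref{BSymft}), the element $a \in Z(\Uhg)$ is identified with $\xiHC(a) \in \Sym\ft \otimes \C[\hbar] \subset \scrA$. The tensored copy of $\Sym\ft$ sits inside $\Azo$ as the $H_2(\tXreg; \C)$-summand (see Example \ref{Springer example}), which has vanishing image under the quotient map $\Azo \to \ft$. Consequently, under the algebra decomposition $\Rreg^W \cong \Dreg \otimes (\Sym\ft)^W$ of Section \ref{sec:do}, the element $\xiHC(a)$ lies in the commuting $(\Sym\ft)^W$ tensor factor, so the $a$-action on $\Mreg^W$ is $\Dreg$-linear. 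Since $\Mreg^W$ is $\Dreg$-cyclic, generated by $\sigma(1) = 1\otimes 1$, this reduces the lemma to the single identity $a \cdot (1\otimes 1) = \GHC(a) \cdot (1\otimes 1)$ in $\Mreg^W$.

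Next I would compute $a \cdot (1\otimes 1)$ directly from the $R$-action formula of Section \ref{sec:ring}: since $\overline{\xiHC(a)} = 0$, one obtains $a \cdot (1\otimes 1) = \xiHC(a) \cdot (1\otimes 1) = 1 \otimes \xiHC(a) = 1 \otimes a$. The task is then to show $\sigma^{-1}(1 \otimes a) = \GHC(a)$ in $\Dreg$. I would apply the trace maps $\tr_V$ for finite dimensional irreducible $\fg$-modules $V$. On one hand, $a \in Z(\Uhg)$ acts on $V$ by the scalar $a(V) \in \C[\hbar]$, so $\tr_V(1 \otimes a) = \tr_V(a) = a(V)\, \chi_V$. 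On the other hand, because $\tr_V$ is a $\Dreg$-module map by Proposition \ref{lemma characters} and $\GHC(a) \in \Dreg$, one has $\tr_V(\sigma(\GHC(a))) = \GHC(a) \cdot \chi_V = a(V)\, \chi_V$ by the defining property of the geometric Harish-Chandra map. Setting $\Delta := \sigma^{-1}(1 \otimes a) - \GHC(a) \in \Dreg$, we obtain $\Delta \cdot \chi_V = 0$ for every $V$, whence $\Delta = 0$ by Corollary \ref{diszero}.

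The main subtlety I anticipate is keeping track of the two distinct copies of $\ft$ inside $\Azo$: the $H_2(\tXreg; \C)$-summand in which $\xiHC(a)$ lives and which is central for the $R$-action, versus the quotient-$\ft$ summand, which gives rise to the generators $\iota(\Sym\ft)$ of $\Dreg$ and does not commute with $\Sreg$. Once this distinction is pinned down using Example \ref{Springer example}, the reduction to the case $d = 1$ is immediate, and the remaining content is forced by the defining property of $\GHC$ combined with the same trace injectivity machinery already used in the proof of Theorem \ref{ComputationMreg}.
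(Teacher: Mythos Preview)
Your core argument is the paper's proof: both compute $\sigma^{-1}(1\otimes a)$ by applying $\tr_V$ for each $G$-irrep $V$, using that $\tr_V$ is $\Dreg$-linear and that $\tr_V(a)=a(V)\chi_V$, and then conclude via Corollary~\ref{diszero}. The paper does not spell out any reduction from general $d$ to $d=1$; it simply defines $\GHC'(a):=\sigma^{-1}(1\otimes a)$ and shows $\GHC'=\GHC$.

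Your additional reduction step has a small ordering slip worth flagging. You correctly observe that $a$ (identified with $\xiHC(a)$ in the central $(\Sym\ft)^W$ tensor factor of $\Rreg^W$) acts $\Dreg$-linearly on $\Mreg^W\cong\Dreg$. But a $\Dreg$-linear endomorphism of the \emph{left} regular module is given by \emph{right} multiplication, so the case $d=1$ yields $a\cdot d = d\,\GHC(a)$, not $\GHC(a)\,d$. Since $\GHC(a)=\delta^{-1}\iota(\xiHC(a)_\rho)\delta$ is not central in $\Dreg$ (one checks already for $\sl_2$ that $[\iota(H),\GHC(C)]\neq 0$), the two orderings genuinely differ. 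This is harmless for the application: Theorem~\ref{Springer-pi} only needs to match the action of $(\Sym\ft)^W$ on the cyclic generator, which is exactly what both you and the paper establish; the same left/right ambiguity is already latent in the paper's formulation of the lemma and of Proposition~\ref{co:CentreAction}.
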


\begin{proof}
Let $\GHC':Z(\Uhg)\to\Dreg$ denote the composition
$$
Z(\Uhg) \rightarrow (\Uhg)_0 \rightarrow \Mreg^W \xrightarrow{\sigma^{-1}} \Dreg.
$$
We wish to show that $ \GHC' = \GHC $.
Fix an element $ a \in Z(\Uhg)$.  For any $G$-irrep $V$, Proposition \ref{lemma characters} implies that
$$\GHC'(a)\cdot \chi_V = \GHC'(a) \cdot \tr_V(1) = \tr_V(a) = a(V) \chi_V = \GHC(a)\cdot \chi_V,$$
thus  $ (\GHC(a) - \GHC'(a))\cdot \chi_V = 0 $.  By Lemma \ref{diszero}, we conclude that $ \GHC'(a) = \GHC(a) $.
\end{proof}

Lemmas \ref{remHC} and \ref{first} combine to give us the following result.

\begin{proposition} \label{co:CentreAction}
For all $y\in (\Sym \ft)^W \otimes \C[\hbar]$ and $d\in \Dreg\cong \Mreg^W$,
we have $$y\cdot d = \delta^{-1} \iota(y_\rho) \delta d.$$
\end{proposition}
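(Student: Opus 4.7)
The plan is to deduce Proposition \ref{co:CentreAction} directly from Lemmas \ref{first} and \ref{remHC}, using the algebraic Harish-Chandra map to bridge between the two. Given $y \in (\Sym\ft)^W \otimes \C[\hbar]$, Proposition \ref{BSymft}(1) provides a unique $a \in Z(\Uhg)$ with $\xiHC(a) = y$. Chaining Lemmas \ref{first} and \ref{remHC} then gives $a \cdot d = \GHC(a)\,d = \delta^{-1}\iota(\xiHC(a)_\rho)\delta\, d = \delta^{-1}\iota(y_\rho)\delta\, d$, which is exactly the right-hand side of the proposition. So it will suffice to verify that $y \cdot d = a \cdot d$ in $\Mreg^W$, that is, that the action of $y$ through the $\Rreg^W$-module structure agrees with the action of $a$ by left multiplication on the $\scrA_0$ factor.

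To establish this equality, I would identify the images of $y$ and $a$ in $\scrA_0$. By Example \ref{Springer example}, $\scrA \cong \Uhg \otimes_{Z(\Uhg)} (\Sym\ft \otimes \C[\hbar])$, with the tensor product formed via $\xiHC$, and the subspace $H_2(\tXreg;\C) \subset \Azo$ is identified with the second $\Sym^1\ft$ factor. Under these embeddings, $y \in \Sym H_2(\tXreg;\C)$ maps to $1 \otimes y$ in $\scrA_0$, while $a \in Z(\Uhg)$ maps to $a \otimes 1$. The defining relation of the tensor product yields $a \otimes 1 = 1 \otimes \xiHC(a) = 1 \otimes y$, so the two images agree.

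Next, I would verify that $y$ really does act by plain left multiplication via this image. From the formula in Section \ref{sec:ring}, an element $x \in \Azo$ acts on $\Sreg \otimes \scrA_0$ by $q^\la \otimes a_0 \mapsto q^\la \otimes (x + \hbar\langle\la, \bar x\rangle)\,a_0$. For $x \in H_2(\tXreg;\C)$, the projection to $\ft$ vanishes ($\bar x = 0$), so the $q^\la$-twist disappears and the action reduces to ordinary left multiplication by the image of $x$ in $\scrA_0$. Extending multiplicatively to $\Sym H_2(\tXreg;\C) \otimes \C[\hbar]$ gives the same conclusion for $y$, proving $y \cdot d = a \cdot d$ as required.

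The main obstacle, though ultimately modest, is the bookkeeping: keeping the various copies of $\ft$ straight (the Cartan inside $\Uhg$, the quantization parameter $H_2(\tXreg;\C) \cong \ft^*$, and the quotient $\ft$ appearing in the quantization exact sequence), and recognizing that $(\Sym\ft)^W \otimes \C[\hbar]$ embeds into the centre of $\scrA_0$ via two different routes — as $Z(\Uhg)$ and as $(\Sym H_2(\tXreg;\C))^W$ — whose images coincide precisely by the tensor-product definition of $\scrA$ from Example \ref{Springer example}.
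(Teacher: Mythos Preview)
Your proposal is correct and follows the paper's own route exactly: the paper simply states that Lemmas \ref{remHC} and \ref{first} combine to give the result, and your argument makes this combination explicit by taking $a = \xiHC^{-1}(y)$ and unwinding why the $(\Sym\ft)^W$-action through $\Rreg^W$ agrees with left multiplication by $a$. The bookkeeping you spell out (that elements of $H_2(\tXreg;\C)\subset\Azo$ have $\bar x=0$ and hence act by plain multiplication, and that $a\otimes 1 = 1\otimes\xiHC(a)$ in the tensor product defining $\scrA$) is precisely the content the paper leaves implicit in its one-line proof.
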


\subsection{The quantum D-module}

Consider the graded $\Dreg$-module homomorphism $\Psi:\Dreg \to (\Qreg^!)^W$ taking $ d\in\Dreg$ to $d \cdot (1 \otimes 1)$.

\begin{theorem} \label{ComputationQreg}
The map $\Psi$ is an isomorphism of $ \Dreg $-modules.  Moreover, 
for any $a\in (\Sym \ft)^W\cong H^*_{G^!}(pt; \C)$, the image of $a$ in $(\Qreg^!)^W$ under the natural inclusion
of the equivariant cohomology of a point into the equivariant cohomology of $T^* (G^!/B^!)$
is equal to $\Psi(\delta^{-1}\iota(a_\rho) \delta)$.
\end{theorem}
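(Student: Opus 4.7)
The plan is to mirror the proof of Theorem~\ref{ComputationMreg}, replacing the algebraic trace maps $\tr_V$ with geometric character solutions on the Springer side. The key input is that, for every $G$-irrep $V$, the character $\chi_V \in \cO(\Treg) \otimes \C[\hbar]$ extends to a $\Dreg$-linear map $\operatorname{ch}_V : (\Qreg^!)^W \to \cO(\Treg) \otimes \C[\hbar]$ sending $1 \otimes 1$ to $\chi_V$. The existence of such maps rests on the Braverman--Maulik--Okounkov theorem~\cite{BMO}, which identifies the quantum multiplication by divisors on $T^*(G^!/B^!)$ with the trigonometric Casimir connection for $\fg$, whose distinguished solutions are precisely the irreducible characters by the Weyl character formula.

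First I would establish surjectivity of $\Psi$ by induction on cohomological degree. Since $\Sreg \subset \Dreg$, it suffices to place $1 \otimes v$ in the image for each $v \in H^*_{G^! \times \cs}(\tX^!; \C)$. By Proposition~\ref{HSymft}(1) the ring $H^*_{G^! \times \cs}(\tX^!; \C) \cong \Sym\ft \otimes \C[\hbar]$ is generated in degree two over $\C[\hbar]$, so write $v = u \cup v'$ with $u \in H^2_{G^! \times \cs}(\tX^!; \C)$ and $\deg v' < \deg v$. Applying $u$ to $1 \otimes v'$ via the quantum D-module action $u \cdot = \hbar \partial_{\bar u} - u \star$ together with the BMO quantum multiplication formula yields
\[ -u \cdot (1 \otimes v') = 1 \otimes v + \hbar \sum_{\alpha \in \Delta_+^!} \langle \alpha, \bar u \rangle \frac{q^\alpha}{1 - q^\alpha}\, 1 \otimes L_\alpha(v'), \]
in which each $1 \otimes L_\alpha(v')$ has strictly smaller cohomological degree. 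The inductive hypothesis covers these corrections, placing $1 \otimes v$ in the image of $\Psi$. For injectivity, if $\Psi(d) = 0$, then $d \cdot \chi_V = \operatorname{ch}_V(\Psi(d)) = 0$ for every $V$, whence $d = 0$ by Corollary~\ref{diszero}.

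For the claim about the action of $(\Sym\ft)^W$, by the injectivity of $\Psi$ and Corollary~\ref{diszero} applied to the joint character map, it suffices to check that $\operatorname{ch}_{V(\lambda)}(1 \otimes a) = \operatorname{ch}_{V(\lambda)}(\Psi(\delta^{-1} \iota(a_\rho) \delta))$ for every dominant weight $\lambda$ and every $a \in (\Sym\ft)^W$. The right-hand side equals $\delta^{-1} \iota(a_\rho) \delta \cdot \chi_{V(\lambda)} = a(\lambda + \rho) \chi_{V(\lambda)}$ by the Weyl character formula computation from the proof of Lemma~\ref{remHC}. For the left-hand side, the inclusion $(\Sym\ft)^W \cong H^*_{G^!}(pt) \hookrightarrow H^*_{G^! \times \cs}(\tX^!; \C)$ realizes $(\Sym\ft)^W$ as a commuting family of operators on $(\Qreg^!)^W$ acting on $\chi_{V(\lambda)}$ by the central character of $V(\lambda)$, which sends $a$ to $a(\lambda + \rho)$ by Proposition~\ref{BSymft}.

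The main obstacle is the construction of the character solution maps $\operatorname{ch}_V$ together with the verification that the higher Casimirs pulled back from $H^*_{G^!}(pt)$ act on the characters by the expected central characters. The divisor case is exactly the BMO identification of the quantum connection with the trigonometric Casimir connection; the full compatibility with $H^*_{G^!}(pt)$ can be checked directly using the equivariant fixed-point formula for characters on $T^*(G^!/B^!)$, or deduced from classical facts about the action of Casimirs on irreducibles combined with BMO.
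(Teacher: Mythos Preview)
Your surjectivity argument is fine and parallels Theorem~\ref{ComputationMreg}. The paper, however, proves that $\Psi$ is an isomorphism by a cleaner route: it checks that the induced map modulo $\hbar$ is an isomorphism (both sides become $\Sreg\otimes\Sym\ft$, and from Equation~\eqref{BMODmodule} the map is $x\mapsto -x$), and then lifts using that both modules are graded, bounded below, and $\hbar$-torsion-free. This avoids the need for any solution maps at this stage.

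The genuine gap in your approach is the construction of the maps $\operatorname{ch}_V$ and, more seriously, the verification that they are compatible with the $(\Sym\ft)^W$-action. What BMO gives you is Equation~\eqref{BMODmodule}: the quantum connection is expressed via the degenerate affine Hecke algebra acting on $J$. This is \emph{not} directly an identification with a ``trigonometric Casimir connection'' whose solutions are characters; passing from the Hecke-algebra-valued connection to a scalar D-module whose solutions are the $\chi_V$ is precisely the Cherednik--Matsuo theorem (Theorem~\ref{AKZQMBP} in the paper), specialized at $t=1$ where $\eta_1=\iota$. Your claim that ``$(\Sym\ft)^W$ acts on $\chi_{V(\lambda)}$ by the central character'' is the statement you are trying to prove: a priori $\operatorname{ch}_V$ is only $\Dreg$-linear, and computing $\operatorname{ch}_V(1\otimes a)$ for $a\in(\Sym\ft)^W$ requires either expressing $1\otimes a$ as $\Psi(d)$ (circular) or an independent argument that $\operatorname{ch}_V$ intertwines the $(\Sym\ft)^W$-action with evaluation at $\lambda+\rho$. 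Neither the equivariant fixed-point formula nor ``classical facts about Casimirs combined with BMO'' supplies this: the higher Casimirs act on $(\Qreg^!)^W$ by cup product, not by quantum multiplication, and relating this to the $\Dreg$-action is exactly the content of Lemma~\ref{conjugationlemma} plus Cherednik--Matsuo.

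The paper's route is: conjugate the BMO connection by $\delta$ to match (after $q\mapsto q^{-1}$ and $\hbar=-1$) the AKZ connection; invoke Cherednik--Matsuo to obtain $\eta_t(a)\cdot(1\otimes 1)=1\otimes a$ in the AKZ module; specialize at $t=1$ where $\eta_1(a)=\iota(a)$; and translate back to obtain $\delta^{-1}\iota(a_\rho)\delta\cdot(1\otimes 1)=1\otimes a$ at generic $(c,1)$, which suffices. Your strategy could be made to work, but it would amount to reproving the $t=1$ case of Cherednik--Matsuo; you should name and invoke that result explicitly rather than attributing it to BMO.
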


Before proving this result, we recall the results of Braverman-Maulik-Okounkov $ \cite{BMO}$ describing the quantum connection of $T^*(G^!/B^!)$. We then relate this connection to the AKZ connection studied by Cherednick and Matsuo, and use this relationship to prove certain differential relations are satisfied in $(\Qreg^!)^W$. With these relations in hand, we will conclude by proving Theorem \ref{ComputationQreg}.

In order to cite the results of  $\cite{BMO}$, it will be convenient to use the more standard shifted isomorphism $\Sym \ft \otimes \C[\hbar] \cong H^*_{G^! \times \cs}\big(T^* (G^!/B^!); \C\big)$ defined by $\la \mapsto \cohpsi(\la) - \hbar \langle \rho, \la \rangle$ (for $ \lambda \in \ft_\Z$). This isomorphism induces an isomorphism of graded vector spaces
\begin{equation} \label{eq:qregpresentation}
(\Qreg^!)^W := \Sreg \otimes H^*_{G^! \times \cs}\big(T^* (G^!/B^!); \C\big) \cong \Sreg \otimes \Sym \ft \otimes \C[\hbar].
\end{equation}
We will now describe the induced $(\Ereg^!)^W$-module structure on the right-hand space.  Recall from Section \ref{sec:do} that $ (\Ereg^!)^W \cong \Dreg \otimes (\Sym \ft)^W$, and that $ \Dreg $ is generated 
over $\C[\hbar]$ by $\Sreg $ and $ \ft $. 
$ \Sreg \subset \Dreg$ acts on $S_\reg \otimes \Sym \ft \otimes \C[\hbar]$ by multiplication on the left factor, and $ (\Sym \ft)^W $ acts by multiplication on the right factor. Following \cite{BMO}, we will describe the action of $x \in \ft$ in terms of the {\bf degenerate Hecke algebra} $\mathcal{H}_{\hbar}$. This is the algebra generated by $\C[W]$ and $\Sym \ft \otimes \C[\hbar]$, subject to the relations
\[ s_{\alpha} x - s_{\alpha}(x) s_{\alpha} = -\hbar \langle \alpha, x \rangle, \] where $s_{\alpha}$ is the reflection associated with the simple root $\alpha$ and $x \in \ft$. Here our $\hbar$ corresponds to the variable $-t$ in \cite{BMO}.

There is a natural identification of $\Sym \ft \otimes \C[\hbar]$ with the $ \mathcal H_\hbar $-module 
$ J := \mathcal{H}_\hbar \otimes_{\C[W]} \C$, where $\C[W]$ acts on $\C$ via the trivial representation, inducing an identification  of the right-hand side of Equation \ref{eq:qregpresentation} with $S_\reg \otimes J$.
By our definition of $\Qreg^!$, the element $x \in \ft$ acts via the covariant derivative $\hbar \partial_x \qconnsign \cohpsi(x) \star$. By \cite[Theorem 3.2]{BMO}, the latter acts on $(\Qreg^!)^W \cong S_\reg \otimes J$  via the operator
\begin{equation} \label{BMODmodule}
 \nabla^{BMO}_{x} := \hbar \partial_x \qconnsign (x + \hbar \langle \rho, x \rangle) \qconnosign \hbar \sum_{\alpha \in \Delta_+} \langle \alpha, x \rangle \frac{q^{\alpha}}{1-q^\alpha}(s_\alpha-1),
\end{equation}
where $\partial_x$ acts on the left factor and $x$ and $s_{\alpha}$ are viewed as elements of $\mathcal{H}_{\hbar}$, which acts on the right factor.  

The $\Sreg$-module $(\Qreg^!)^W$ has infinite rank. In order to apply certain results on holonomic D-modules, we will need to base-change to $T_\reg \subset \Spec \Sreg$ and consider various finite rank quotients, obtained by specialising equivariant parameters, or equivalently central characters of the Hecke algebra. The center of $\mathcal{H}_{\hbar}$ is equal to $(\Sym \ft)^W \otimes \C[\hbar]$.  Thus, given any $c \in \ft^* / W$ and $\cmt \in \C$, we can consider the corresponding evaluation module $\C_c$ of $(\Sym\ft)^W$, and define the $\mathcal{H}_{\hbar}$-module
$$J^{c,\cmt} := \Sym \ft \otimes_{(\Sym \ft)^W} \C_c \otimes_{\C[\hbar]} \C[\hbar]/(\hbar+\cmt)$$
and the $D_{\hbar}(\Treg)$-module $$Q^{c,t} := (Q_{T_\reg}^{!})^W \otimes_{(\Sym \ft)^W} \C_c \otimes_{\C[\hbar]} \C[\hbar]/(\hbar+\cmt).$$ As a vector space, we have an isomorphism $Q^{c,\cmt} \cong \mathcal{O}(T_\reg) \otimes J^{c,\cmt}$. Note that $J^{c,\cmt}$ has complex dimension $|W|$.

\begin{definition} \cite[Definition 1.1.39]{cherednik2005double}
The affine Knizhnik-Zamolodchikov (AKZ) connection is the following $\mathcal{H}_\hbar$-valued connection on $\Treg$ : 	
\begin{equation} \label{AKZconnection} \nabla_x^{\AKZsup} := \partial_x \qconnsign x \qconnosign \hbar \sum_{\alpha \in \Delta_+} \langle \alpha, x \rangle \frac{s_\alpha}{q^\alpha-1}. \end{equation}
  Let $\mathcal{M}_{\AKZ}  := \cO(\Treg) \otimes J$ be the $\Dd(\Treg)$-module on which $\iota(x)$ acts via $\nabla_x^{\AKZsup}$. As above, we write $\mathcal{M}^{c,\cmt}$ for the corresponding specialisations at $(c,\cmt) \in \ft^* / W \times \C$.
\end{definition}

Let $\pi$ be the $\C$-linear involution of $\cO(\Treg) \otimes J$ defined by $$\pi( f(q) \otimes a) = f(q^{-1}) \otimes a. $$ Note that $\pi \circ \partial_x \circ \pi = - \partial_x$.
 
\begin{lemma} \label{conjugationlemma}
	The isomorphism $Q^{c,1} \cong \mathcal{O}(T_\reg) \otimes J^{c,1}$ identifies the action of $\delta^{-1} \nabla_{x_\rho}^{BMO} \delta$ with the action of $\pi \circ \nabla^{\AKZsup}_x \circ \pi$ specialised at $\hbar=-1$. 
\end{lemma}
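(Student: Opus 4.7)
The plan is a direct calculation: expand both sides as explicit endomorphisms of $\mathcal{O}(T_\reg) \otimes J^{c,1}$, then match terms symbol-by-symbol on monomials $q^\lambda \otimes v$. First I would interpret $\nabla^{BMO}_{x_\rho}$ as the action of $x_\rho \in \Sym\ft \otimes \C[\hbar]$ on $(\Qreg^!)^W$ through the $(\Ereg^!)^W$-module structure, namely $\hbar \partial_x - x_\rho \star$ (with $\overline{x_\rho} = x$). By plugging the element $x$ into Equation~\eqref{BMODmodule} and using $x_\rho = x - \hbar\langle\rho, x\rangle$, the $\rho$-shift exactly cancels the classical mass term $\hbar\langle\rho,x\rangle$ appearing in BMO's formula, yielding the simplified expression
\[
\nabla^{BMO}_{x_\rho} = \hbar\partial_x - x + \hbar \sum_{\alpha \in \Delta_+} \langle \alpha, x\rangle \frac{q^\alpha}{1-q^\alpha}(s_\alpha - 1).
\]

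Next I would compute $\delta^{-1} \nabla^{BMO}_{x_\rho} \delta$. The key input is the logarithmic-derivative identity $\delta^{-1} \partial_x(\delta) = \sum_\alpha \langle\alpha, x\rangle q^\alpha/(q^\alpha - 1)$, which yields $\delta^{-1}\hbar\partial_x\delta = \hbar\partial_x + \hbar\sum_\alpha \langle\alpha,x\rangle q^\alpha/(q^\alpha - 1)$. Multiplication by $x$ and each operator $\frac{q^\alpha}{1-q^\alpha}(s_\alpha - 1)$ commute with $\delta$, since $s_\alpha$ acts on $(\Qreg^!)^W \cong \Sreg \otimes J$ through the Hecke action on $J$ only (the action of $W$ on $(\Qreg^!)^W$ comes from $H^*_{T^!\times\cs}(\tX^!; \C)$, not from $\Sreg$, as noted in Section~\ref{sec:weyl}).

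In parallel, I would expand $\pi\circ\nabla^{AKZ}_x\circ\pi$. Using the identities $\pi\partial_x\pi = -\partial_x$, $\pi x \pi = x$ (since $x$ acts only on $J$), and $\pi(q^\alpha - 1)^{-1}\pi = q^\alpha/(1-q^\alpha)$, together with the commutativity of $s_\alpha$ and $\pi$, I would obtain
\[
\pi\circ\nabla^{AKZ}_x\circ\pi = -\partial_x - x + \hbar \sum_{\alpha \in \Delta_+} \langle \alpha, x\rangle \frac{q^\alpha s_\alpha}{1 - q^\alpha}.
\]
Setting $\hbar = -1$ on both sides and collecting the three pieces (derivative, multiplication-by-$x$, and Hecke-valued), the agreement reduces to a single identity matching the logarithmic derivative of $\delta$ against the $-1$ contribution in $(s_\alpha - 1)$, which is precisely the trigonometric Matsuo-Cherednik equivalence.

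The main obstacle is careful bookkeeping: tracking the $\rho$-shift so that the classical mass terms cancel, keeping the ordering conventions for the Hecke generators $s_\alpha$ straight against the functions $q^\alpha/(1-q^\alpha)$, and confirming consistently that $s_\alpha$ acts only on the factor $J$ in both the BMO and AKZ formulas (so that $\pi$ and $\delta$ commute with it). No conceptually new input is required beyond the explicit formulas~\eqref{BMODmodule} and~\eqref{AKZconnection} and the logarithmic-derivative computation of $\delta$.
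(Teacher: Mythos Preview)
Your proposal is correct and follows essentially the same route as the paper's proof: both are direct computations that (i) use the $\rho$-shift to cancel the constant mass term $\hbar\langle\rho,x\rangle$ in $\nabla^{BMO}_x$, (ii) compute the $\delta$-conjugation via the logarithmic derivative $\delta^{-1}\partial_x(\delta)=\sum_\alpha\langle\alpha,x\rangle q^\alpha/(q^\alpha-1)$, (iii) exploit that $s_\alpha$ acts only on the $J$ factor and hence commutes with $\delta$, and (iv) set $\hbar=-1$ and match terms. The only cosmetic differences are that the paper performs the $\delta$-conjugation first at the level of the ring $\Ereg^!$ (which is equivalent to your log-derivative step on the operator) and then conjugates the resulting BMO-side operator by $\pi$ to land on $\nabla^{AKZ}_x$, whereas you conjugate the AKZ side by $\pi$ and compare; these are trivially equivalent.
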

\begin{proof}
	In the ring $\Ereg^!$, we have the equality \begin{align*} \delta^{-1} \tilde \psi(x_\rho) \delta  = \tilde \psi(x_\rho) + \hbar \sum_{\alpha \in \Delta^+} \langle \alpha, x \rangle \frac{q^{\alpha}}{q^{\alpha}-1}. \end{align*}
The action of this element on $(\Qreg^!)^{W}$ is given, via \ref{BMODmodule}, by
	\begin{align*} &\phantom{= } \hbar\partial_x \qconnsign x \qconnosign \hbar \sum_{\alpha \in \Delta_+} \langle \alpha, x \rangle \frac{q^{\alpha}}{1 - q^{\alpha}}(s_\alpha-1) \qconnosign  \hbar \sum_{\alpha \in \Delta^+} \langle \alpha, x \rangle \frac{q^{\alpha}}{1-q^{\alpha}}	\\ 
	&=  \hbar \partial_x \qconnsign x \qconnosign \hbar \sum_{\alpha \in \Delta_+} \langle \alpha, x \rangle \frac{q^{\alpha}}{1 - q^{\alpha}} s_\alpha \\
	 &=  \hbar \partial_x \qconnsign x \qconnosign  \hbar\sum_{\alpha \in \Delta_+} \langle \alpha, x \rangle \frac{s_\alpha}{q^{-\alpha}-1}. \end{align*}
	Conjugating by $\pi$ gives
	\[  \qconnsign \hbar \partial_x \qconnsign x \qconnosign \hbar\sum_{\alpha \in \Delta_+} \langle \alpha, x \rangle \frac{s_\alpha}{q^{\alpha}-1}\]and setting $\hbar=-1$ yields
	\[  \partial_x \qconnsign x  \qconnsign  \sum_{\alpha \in \Delta_+} \langle \alpha, x \rangle \frac{s_\alpha}{q^{\alpha}-1}.  \] 
\end{proof}
Via Lemma \ref{conjugationlemma}, we will derive the differential relations of Theorem \ref{ComputationQreg} from known relations in $\mathcal{M}^{c, \cmt}_{\AKZ}$. 

The key tool is the identification, due to Cherednik and Matsuo, of $\mathcal{M}_{\AKZ}^{c,\cmt}$ with a certain scalar $D(\Treg)$-module which Cherednik calls the quantum many body problem, and which \cite{BMO} refer to as the trigonometric Calogero-Moser system (see \cite{opdam1998lectures} for an introduction). It may be described as follows. Fix a fixed Weyl invariant non-degenerate quadratic form on $\ft$. The {\bf Calogero-Moser module} $\mathcal{CM}^\cmt$ is a module over the ring $\Dd(\Treg) \otimes (\Sym \ft)^W $. As a module over $\Dd(\Treg)$, it is simply $\Dd(\Treg)$ itself. The action of $a \in (\Sym \ft)^W$ is given by right-multiplication by $\eta_\cmt(a) \in \Dreg$, where $\eta_{\cmt} : (\Sym \ft)^W \to  \Dreg$ is the unique algebra homomorphism satisfying the following two properties.
\begin{enumerate}
	\item $\eta_{\cmt}(a)$ lies in the algebra generated by $\iota(b), b \in \Sym \ft$ and $\frac{1}{1 - q^{\alpha}}$, $\alpha \in \Delta_+$, and has an expansion
	\[ \eta_{\cmt}(a) = \iota(a) + \sum_{\alpha \in \Z^{>0} \Delta_+} q^{\alpha} p_{\alpha}(a) \]
	where the $p_{\alpha}(a)$ are constant coefficient differential operators.
	\item Let $C$ be the element of $(\Sym^2 \ft)^W$ determined by our fixed quadratic form. Let $L \in \Dd(\Treg)$ be the associated constant coefficient differential operator, sometimes called the Laplacian.
\[ \eta_{\cmt}(C) =  \iota(C) - \cmt(\cmt-1)\sum_{\alpha \in \Delta_+} \frac{(\alpha, \alpha )}{(q^{\alpha/2} - q^{-\alpha/2})^2}.\]
\end{enumerate}
The relation $[\eta_{\cmt}(a), \eta_{\cmt}(C)] = 0$ imposes a recursive relation on the coefficients $p_{\alpha}(a)$, which fixes them uniquely. 
\begin{lemma}
$\eta_1(a) = \iota(a)$.
\end{lemma}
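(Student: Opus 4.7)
The plan is to observe that the map $\iota \colon \Sym \ft \to \Dreg$, restricted to $(\Sym \ft)^W$, already satisfies both defining properties of $\eta_\cmt$ at $\cmt = 1$, so the uniqueness of $\eta_\cmt$ forces $\eta_1 = \iota|_{(\Sym \ft)^W}$.

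First I would verify that $\iota$ restricts to an algebra homomorphism $(\Sym \ft)^W \to \Dreg$. This is immediate: $\iota$ is induced by the inclusion of $\ft$ as invariant vector fields on $\Treg$, and these mutually commute, so $\iota(\Sym \ft)$ is a commutative subalgebra of $\Dreg$. In particular $[\iota(a), \iota(C)] = 0$ for every $a \in (\Sym \ft)^W$, which is the consistency condition needed to even speak of an algebra map with the prescribed value on $C$.

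Next I would check the two defining properties. For property (1), the operator $\iota(a)$ is a translation-invariant (``constant-coefficient'') differential operator on $\Treg$ with no $q^\alpha$ contribution, so it trivially has an expansion of the required form with $p_\alpha(a) = 0$ for every $\alpha \in \Z^{>0}\Delta_+$. For property (2) at $\cmt = 1$, the factor $\cmt(\cmt-1)$ vanishes, so the requirement collapses to $\eta_1(C) = \iota(C)$, which is exactly the value of $\iota$ on $C$. Thus $\iota|_{(\Sym \ft)^W}$ is a legitimate candidate for $\eta_1$.

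Finally, the uniqueness of $\eta_\cmt$ (which holds because the commutation relation $[\eta_\cmt(a), \eta_\cmt(C)] = 0$ determines each $p_\alpha(a)$ recursively in terms of the $p_\beta(a)$ with $\beta < \alpha$ and the known coefficients of $\eta_\cmt(C)$) gives $\eta_1 = \iota$ on $(\Sym \ft)^W$. There is no real obstacle here: the lemma is essentially a sanity check that the trigonometric Calogero-Moser system degenerates to the free Laplacian when the coupling parameter $\cmt(\cmt-1)$ vanishes, and the proof is little more than unwinding the definitions.
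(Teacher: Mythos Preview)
Your proposal is correct and follows essentially the same approach as the paper: verify that $\iota|_{(\Sym\ft)^W}$ is an algebra homomorphism satisfying the two defining conditions (the second one being trivial at $\cmt=1$ because the factor $\cmt(\cmt-1)$ vanishes), and then invoke uniqueness. The paper's proof is just a one-line compression of exactly this argument.
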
 
\begin{proof}
The right-hand side defines a homomorphism which clearly satisfies both conditions above.    \end{proof}
As above, given $c \in \ft / W$, we define $\mathcal{CM}^{c,\cmt} := \mathcal{CM}^t \otimes_{(\Sym \ft)^W} \C_c$. This specialisation imposes the relations 
\[ \eta_t(a) \cdot 1 = a(c) \]
for all $a \in (\Sym \ft)^W$. 

Consider the inclusion $1 \to (\Sym \ft)^W$. This extends to a $\Dd(\Treg)$-linear map $\Psi^t : \Dd(\Treg) \to \mathcal{M}^t_{\AKZ}$, taking $1$ to $1 \otimes 1$. 

\begin{theorem} \label{AKZQMBP} The map $\Psi^{\cmt}$ induces a homomorphism of $\Dd(\Treg)$-modules
\[ \Psi^{c,\cmt} : \mathcal{CM}^{c,\cmt} \to \mathcal{M}_{\AKZ}^{c,\cmt}. \]
\end{theorem}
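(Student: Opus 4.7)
To prove Theorem \ref{AKZQMBP}, the plan is to reduce the statement to a single identity at the cyclic vector $1\otimes 1$. Since $\mathcal{CM}^{c,\cmt}$ is the quotient of $\Dd(\Treg)$ by the right ideal generated by $\{\eta_{\cmt}(a)-a(c):a\in(\Sym\ft)^W\}$, and $\Psi^{\cmt}$ is $\Dd(\Treg)$-linear, it descends to a well-defined $\Psi^{c,\cmt}$ precisely when
\[
\eta_{\cmt}(a)\cdot(1\otimes 1)\;=\;a(c)\,(1\otimes 1)\quad\text{in }\mathcal{M}^{c,\cmt}_{\AKZsup}
\]
for every $a\in(\Sym\ft)^W$.

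The first step is to recognize $a(c)(1\otimes 1)$ as coming from a central action. The subalgebra $(\Sym\ft)^W\otimes\C[\hbar]$ is the centre of $\mathcal{H}_\hbar$, so its left action on the $J$-factor of $\mathcal{M}_{\AKZsup}$ commutes with both $\ft$ and $W\subset\mathcal{H}_\hbar$, and hence with the AKZ action of $\Dd(\Treg)$. After specializing the central character to $c$, this action sends $1\otimes 1$ to $a(c)(1\otimes 1)$, so the displayed identity is equivalent to the central-character-independent statement
\[
\eta_{\cmt}(a)\cdot(1\otimes 1)\;=\;1\otimes a\quad\text{in }\mathcal{M}^{\cmt}_{\AKZsup}.
\]

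To prove this last identity I would use the uniqueness of $\eta_{\cmt}$ coming from its defining properties (i) and (ii). Let $\eta'_{\cmt}(a)\in\Dd(\Treg)$ be a differential operator satisfying $\eta'_{\cmt}(a)\cdot(1\otimes 1)=1\otimes a$, and verify that $\eta'_{\cmt}$ is an algebra homomorphism satisfying (i) and (ii); uniqueness will then force $\eta'_{\cmt}=\eta_{\cmt}$. Property (i) (expansion $\iota(a)+\sum_{\alpha\in\Z^{>0}\Delta_+}q^\alpha p_\alpha(a)$) follows from $s_\alpha\cdot 1=1$ in $J$ together with the expansion of $1/(q^\alpha-1)$ in positive powers of $q^\alpha$, once we write the AKZ action in terms of $\iota(a)$ plus correction terms. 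Property (ii) is checked by a direct expansion of $\sum_x(\nabla^{\AKZsup}_x)^2$ applied to $1\otimes 1$ for $x$ ranging over an orthonormal basis of $\ft$: the $s_\alpha$-cross-terms collapse on $W$-invariants and reproduce precisely the Calogero-Moser potential $\cmt(\cmt-1)\sum_\alpha(\alpha,\alpha)/(q^{\alpha/2}-q^{-\alpha/2})^2$.

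The main obstacle is establishing the existence of $\eta'_{\cmt}(a)\in\Dd(\Treg)$, i.e., showing that $1\otimes a$ lies in the cyclic $\Dd(\Treg)$-submodule generated by $1\otimes 1$ and can be realised by a genuine differential operator rather than an element of some larger completion. This is the essential content of the classical Cherednik--Matsuo correspondence, which identifies the $W$-invariant part of the Dunkl (monodromic) representation of $\mathcal{H}_\hbar$ with the quantum trigonometric Calogero--Moser system. I would invoke this theorem directly (see \cite{opdam1998lectures}) or, alternatively, construct $\eta'_{\cmt}(a)$ recursively from its leading symbol $\iota(a)$ using the commutativity $[\eta'_{\cmt}(a),\eta'_{\cmt}(C)]=0$ in $\Dd(\Treg)$ forced by the analogous commutativity in $\mathcal{H}_\hbar$, which determines the higher coefficients $p_\alpha(a)$ uniquely as in the defining recursion for $\eta_\cmt$ itself.
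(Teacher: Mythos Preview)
Your reduction is correct: since $\mathcal{CM}^{c,\cmt}$ is the quotient of $D(\Treg)$ by the \emph{left} ideal (not right) generated by $\eta_\cmt(a)-a(c)$, and $\Psi^\cmt$ is left $D(\Treg)$-linear, the map descends precisely when $\eta_\cmt(a)\cdot(1\otimes 1)=1\otimes a$ in $\mathcal{M}^\cmt_{\AKZsup}$. This is a clean and useful reformulation.

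Both your approach and the paper's ultimately rest on the Cherednik--Matsuo correspondence; the difference is in packaging. The paper proceeds by direct citation: it invokes \cite[Theorem 1.2.12]{cherednik2005double}, which is stated in terms of sheaves of analytic solutions, and then translates this into a statement about D-modules by passing to the associated analytic D-modules and using that the algebraic modules embed in the analytic ones. Your approach instead unpacks the statement into the cyclic-vector identity above and then either cites Cherednik--Matsuo or proposes to verify the defining properties of $\eta_\cmt$ directly. The paper's route is shorter; yours makes the content of the citation more transparent.

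Your proposed ``alternative'' to citing Cherednik--Matsuo, however, is not complete as written. The plan is to produce $\eta'_\cmt(a)$ with $\eta'_\cmt(a)\cdot(1\otimes 1)=1\otimes a$ and then identify it with $\eta_\cmt$ via the uniqueness characterization. But the uniqueness characterization only pins down an operator once you already know it exists with the expansion in property~(i); your recursive construction ``from the leading symbol $\iota(a)$ using $[\eta'_\cmt(a),\eta'_\cmt(C)]=0$'' simply reproduces $\eta_\cmt$ and does not by itself establish that the resulting operator acts as $1\otimes a$ on the cyclic vector. Closing that loop is exactly the content of Cherednik's theorem (that the Dunkl--Cherednik operators, restricted to $W$-invariants, become the Calogero--Moser integrals), so you cannot avoid the citation this way. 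Your direct check of property~(ii) for the quadratic Casimir is correct and classical, but extending it to all of $(\Sym\ft)^W$ requires the commutativity of the Dunkl operators, which is again Cherednik's input.
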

\begin{proof}
The proposition is essentially \cite[Theorem 1.2.11]{cherednik2005double} (see also \cite{matsuo1992integrable}). Since both references state their results in terms of sheaves of solutions rather than D-modules, we make the translation here. To do so, we now switch from algebraic D-modules to D-modules on the analytic space associated to $\Treg$.   We write $ \mathcal D_{\an} $ for the sheaf of differential operators on this space and $ \mathcal O_{\an} $ for the sheaf of analytic functions.  Similarly, $\mathcal{M}_{\AKZ}^{c,\cmt}$ and $\mathcal{CM}^{c,\cmt}$ have analytic versions denoted $ \mathcal{M}^{c,\cmt}_{\an}$ and  $\mathcal{CM}^{c,\cmt}_{\an} $.

The homomorphism $\Psi^{\cmt}$ induces a map of sheaves $$\Psi^{\cmt, c}_{\an} : \mathscr{H}om_{\mathcal D_{\an}} (\mathcal{M}_{\an}^{c,\cmt} , \mathcal{O}_{\an} ) \to  \mathscr{H}om_{\mathcal D_{\an}}(\mathcal D_{\an} , \mathcal{O}_{\an} ),$$ which is given by the formula
$$
\sigma \mapsto ( d \mapsto \sigma(d \cdot 1))
$$
for $ d $ a section of $\mathcal D_{\an}  $ and $ \sigma $ a section of $  \mathscr{H}om_{\mathcal D_{\an}}(\mathcal{M}_{\an}^{c,\cmt} , \mathcal{O}_{\an} ) $.  By \cite[Theorem 1.2.12]{cherednik2005double}, this map factors through $  \mathscr{H}om_{\mathcal D_{\an}}(\mathcal{CM}^{c,\cmt}_{\an} , \mathcal{O}_{\an} ) $, which implies that the map $ \mathcal D_{\an} \rightarrow \mathcal{M}_{\an}^{c,\cmt} $ factors through the projection $ {\mathcal D_{\an}} \rightarrow \mathcal{CM}^{c,\cmt}_{\an}  $.
Taking global sections, we find that $\Psi^{\cmt, c}_{\an}$ factors through the projection from $D_{\an}(\Treg)$ to $\mathcal{CM}^{c,\cmt}_{\an} $.
Since the algebraic modules sit inside of the analytic ones, this implies the statement that we need.
\end{proof} 
It follows from Theorem \ref{AKZQMBP} that for any $a \in (\Sym \ft)^W$, we have the equality
\[ \eta_t(a) \cdot 1 \otimes 1 = 1 \otimes a \] in $\mathcal{M}$. 
Applying $\pi$ to both sides, we get \[ \pi \circ \eta_t(a) \cdot 1 \otimes 1 = \pi \cdot 1 \otimes a. \]
We can rewrite the left-hand side as $\pi \circ \eta_t(a) \circ \pi \cdot 1 \otimes 1$. Specializing at $(c,1)$ and recalling $\eta_1(a) = \iota(a)$, we obtain the equality
	\begin{equation} \label{AKZ1scalar}  \pi \circ \iota(a) \circ \pi \cdot  1 \otimes 1 =  1 \otimes a(c) \end{equation}
 in $\mathcal{M}^{c,1}$. By Lemma \ref{conjugationlemma}, we can identify $\pi \circ \iota(a) \circ \pi $ with the action of $\delta^{-1} \iota(a_\rho) \delta$ on $Q^{c,1}$. We  thus obtain our key result. 
\begin{corollary}
	For any $a \in (\Sym \ft)^W$, the following holds in $Q^{c,1}$:	
	\begin{equation} \label{HCrelation}
\delta^{-1} \iota(a_\rho) \delta  \cdot 1 \otimes 1 = 1 \otimes a(c).
	\end{equation}
\end{corollary}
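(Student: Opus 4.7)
The plan is to obtain the Corollary as an essentially formal deduction from equation \eqref{AKZ1scalar} together with Lemma \ref{conjugationlemma}, promoted from degree-one generators of $\ft$ to arbitrary $a\in(\Sym\ft)^W$. The structural outline is the one sketched in the paragraph immediately preceding the statement, and my job is to make the promotion of Lemma \ref{conjugationlemma} honest.

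Concretely, I would first invoke \eqref{AKZ1scalar}, which already asserts
\[
\pi\circ\iota(a)\circ\pi\cdot(1\otimes 1)=1\otimes a(c)\qquad\text{in }\mathcal{M}^{c,1}.
\]
This identity is itself a three-step specialisation of Theorem \ref{AKZQMBP}: the Cherednik-Matsuo homomorphism $\Psi^{\cmt}$ produces the universal relation $\eta_{\cmt}(a)\cdot(1\otimes 1)=1\otimes a$ in $\mathcal{M}$; $\pi$-conjugation (which fixes the vector $1\otimes 1$ on the nose because $\pi$ acts trivially on the $J$-factor, and likewise fixes $1\otimes a$) turns this into $\pi\circ\eta_{\cmt}(a)\circ\pi\cdot(1\otimes 1)=1\otimes a$; and specialising to the central character $c$ together with $\cmt=1$, at which $\eta_1(a)=\iota(a)$ by the lemma immediately above, produces the displayed equation.

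Next, I would transport this equality from $\mathcal{M}^{c,1}$ to $Q^{c,1}$ along the vector-space isomorphism $Q^{c,1}\cong\mathcal{O}(T_\reg)\otimes J^{c,1}$ of \eqref{eq:qregpresentation}, using (an upgrade of) Lemma \ref{conjugationlemma} to identify the action of $\delta^{-1}\iota(a_\rho)\delta$ on $Q^{c,1}$ with the action of $\pi\circ\iota(a)\circ\pi$ on $\mathcal{O}(T_\reg)\otimes J^{c,1}$. Combining this identification with \eqref{AKZ1scalar} and evaluating on the cyclic vector $1\otimes 1$ gives the desired equality
\[
\delta^{-1}\iota(a_\rho)\delta\cdot(1\otimes 1)=1\otimes a(c)\qquad\text{in }Q^{c,1}.
\]
The algebraic step in the upgrade is that the two assignments
\[
a\;\longmapsto\;\delta^{-1}\iota(a_\rho)\delta,\qquad a\;\longmapsto\;\pi\circ\iota(a)\circ\pi
\]
are both $\C[\hbar]$-algebra homomorphisms from $\Sym\ft\otimes\C[\hbar]$ into $D_\hbar(T_\reg)$, since $\iota$ is an algebra map, the $\rho$-shift $a\mapsto a_\rho$ is an algebra automorphism of $\Sym\ft\otimes\C[\hbar]$, and conjugation by $\delta\in S_\reg$ and by the involution $\pi$ are algebra automorphisms of the ambient rings of differential operators; so agreement on the generators $x\in\ft$ forces agreement on all of $(\Sym\ft)^W$.

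The main obstacle is securing the degree-one case of this upgrade from Lemma \ref{conjugationlemma} as stated. The lemma matches the full connection operators $\delta^{-1}\nabla^{BMO}_{x_\rho}\delta$ and $\pi\circ\nabla^{AKZ}_x\circ\pi$, each of which contains a divisor-multiplication piece and a Hecke $s_\alpha$-piece in addition to the bare differential operators $\delta^{-1}\iota(x_\rho)\delta$ and $\pi\circ\iota(x)\circ\pi$. I would therefore revisit the proof of Lemma \ref{conjugationlemma} and, using the identification $(E_\reg^!)^W\cong\Dreg\otimes(\Sym\ft)^W$, verify that after restricting to the cyclic vector and tracking how $(\Sym\ft)^W$ acts as scalars, the excess multiplicative and Hecke contributions pair up correctly so that the surviving identification is exactly $\delta^{-1}\iota(x_\rho)\delta\leftrightarrow\pi\circ\iota(x)\circ\pi$ at $\hbar=-1$. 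Once this is in place, the algebra-homomorphism argument above extends the identification to every $a\in(\Sym\ft)^W$, and combining with \eqref{AKZ1scalar} closes the proof.
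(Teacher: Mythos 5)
Your proposal follows the same route as the paper: extract equation \eqref{AKZ1scalar} from Theorem \ref{AKZQMBP} (via the Cherednik--Matsuo factorization through $\mathcal{CM}^{c,t}$, the $\pi$-conjugation, and the observation $\eta_1=\iota$), then apply Lemma \ref{conjugationlemma}, and extend from $\ft$ to $(\Sym\ft)^W$ by the algebra-homomorphism argument. The paper indeed leaves these last two steps compressed into one sentence, and your filling-in is correct.

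One clarification about the ``main obstacle'' you flag. The distinction you draw between the ``full connection operators'' $\delta^{-1}\nabla^{BMO}_{x_\rho}\delta$ and the ``bare'' $\delta^{-1}\iota(x_\rho)\delta$ dissolves once you recall how the two $D(T_\reg)$-module structures are defined: on $Q^{c,1}$ the element $\iota(x_\rho)$ acts \emph{by definition} via $\nabla^{BMO}_{x_\rho}$, and on $\mathcal{M}^{c,1}$ the element $\iota(x)$ acts by $\nabla^{\mathrm{AKZ}}_x$. In equation \eqref{AKZ1scalar} as well, ``$\pi\circ\iota(a)\circ\pi$ acts on $1\otimes 1$'' already means the $\pi$-conjugate of the AKZ action of $\iota(a)$. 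So there is no separate ``bare'' action to reconcile: the operator identity $\delta^{-1}\nabla^{BMO}_{x_\rho}\delta=\pi\circ\nabla^{\mathrm{AKZ}}_x\circ\pi$ on $\mathcal{O}(T_\reg)\otimes J^{c,1}$ \emph{is} the statement that the $D(T_\reg)$-module actions of $\delta^{-1}\iota(x_\rho)\delta$ (in $Q^{c,1}$) and $\pi\circ\iota(x)\circ\pi$ (in $\mathcal{M}^{c,1}$) agree. Since both $a\mapsto\delta^{-1}\iota(a_\rho)\delta$ and $a\mapsto\pi\circ\iota(a)\circ\pi$ are $\C[\hbar]$-algebra maps, as you observe, matching on generators gives the identity on all of $(\Sym\ft)^W$, and composing with \eqref{AKZ1scalar} finishes the proof. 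Your plan to ``revisit the proof'' of the lemma is therefore safe but unnecessary; no extra cancellation of Hecke or multiplicative terms needs to be re-derived.
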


\begin{proofComputationQreg}
Consider the induced map $\bar \Psi:\Dreg / \hbar \Dreg \to  (\Qreg^!)^W / \hbar  (\Qreg^!)^W$.
We have $\Dreg / \hbar \Dreg \cong \Sreg\otimes\Sym\ft \cong (\Qreg^!)^W / \hbar  (\Qreg^!)^W$.
Under these identifications, Equation \eqref{BMODmodule} implies that $\bar\Psi$ is the unique $\Sreg$-algebra map taking $x$ to $-x$ for all $x\in\ft$.
In particular, $\bar\Psi$ is an isomorphism.
Since $\Dreg $ and $(\Qreg^!)^W $ are graded modules over $ \C[\hbar] $ with bounded below grading and $(\Qreg^!)^W$ is torsion-free as a module over $\C[\hbar]$, we conclude (by a standard argument) that $\Psi$ is also an isomorphism.

Next, we show that for any $a \in (\Sym \ft)^W$, we have $\delta^{-1} \iota(a_\rho) \delta \cdot (1 \otimes 1) = 1 \otimes a$. It is enough to show that this equality holds after localizing to $ \Treg $ and specializing $ (\Qreg^!)^W$ to a generic point in $\Spec \; (\Sym \ft)^W \otimes \C[\hbar]$. By homogeneity, we may assume $\hbar=-1$, i.e. $t=1$ in the notation of this section. The desired equality is then Equation \ref{HCrelation}.
\end{proofComputationQreg}

\vspace{\baselineskip}

The following result now follows immediately from Theorem \ref{ComputationMreg}, Proposition \ref{co:CentreAction} and Theorem \ref{ComputationQreg}.
\begin{theorem}\label{Springer-pi}
We have an isomorphism of $  \Dreg \otimes (\Sym \ft)^W $-modules $ \Mreg^W \cong (\Qreg^!)^W  $, taking
$1\otimes 1$ to $1\otimes 1$.  In particular, Conjectures \ref{pi} and \ref{piW} hold for the Springer resolution.
\end{theorem}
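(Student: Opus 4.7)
The plan is to build the desired isomorphism as the composition $\Psi \circ \sigma^{-1} \colon \Mreg^W \to (\Qreg^!)^W$, where $\sigma \colon \Dreg \to \Mreg^W$ is the $\Dreg$-module isomorphism supplied by Theorem \ref{ComputationMreg} and $\Psi \colon \Dreg \to (\Qreg^!)^W$ is the $\Dreg$-module isomorphism supplied by Theorem \ref{ComputationQreg}. Both maps send $1 \in \Dreg$ to the distinguished generator $1 \otimes 1$, so the composition is automatically a $\Dreg$-linear isomorphism of graded modules matching the distinguished generators. Using the identification $\Rreg^W \cong \Dreg \otimes (\Sym \ft)^W \cong (\Ereg^!)^W$ from Section \ref{sec:do}, it only remains to check compatibility with the action of the second tensor factor $(\Sym \ft)^W$.

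The verification is immediate from the two computations of the central action already in place. On the algebraic side, Proposition \ref{co:CentreAction} tells us that for $a \in (\Sym \ft)^W$ and $d \in \Dreg$, the action on $\Mreg^W$ satisfies $a \cdot \sigma(d) = \sigma\bigl(\delta^{-1}\iota(a_\rho)\delta \cdot d\bigr)$. On the geometric side, the second assertion of Theorem \ref{ComputationQreg} records that $a \cdot (1 \otimes 1) = \Psi(\delta^{-1}\iota(a_\rho)\delta)$ in $(\Qreg^!)^W$, and $\Dreg$-linearity of $\Psi$ then gives $a \cdot \Psi(d) = \Psi\bigl(\delta^{-1}\iota(a_\rho)\delta \cdot d\bigr)$. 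The two prescriptions coincide under $\Psi \circ \sigma^{-1}$, proving Conjecture \ref{piW}.

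Conjecture \ref{pi} is then obtained by base change. As explained in Section \ref{sec:weyl}, Equation \eqref{eq:MfromMreg} expresses $\Mreg$ as $\Mreg^W \otimes_{(\Sym H_2(\tXreg; \C))^W} \Sym H_2(\tXreg; \C)$ and Equation \eqref{eq:QfromQreg} expresses $\Qreg^!$ by the parallel formula on the geometric side. The symplectic duality isomorphism identifies the two $W$-invariant coefficient rings, so tensoring the $W$-equivariant isomorphism $\Mreg^W \cong (\Qreg^!)^W$ against $\Sym H_2(\tXreg; \C)$ yields the unrestricted statement. The main obstacles lie entirely in the preparatory results: establishing surjectivity of $\sigma$ via the reduction $E_\alpha b_\alpha = \frac{q^\alpha}{1-q^\alpha}\hbar c$ in $\Mreg^W$, verifying injectivity through the fact that characters of $G$-irreps separate points on $T/W$, and transporting the trigonometric Calogero--Moser relations across the Cherednik--Matsuo comparison to produce the Harish-Chandra-type identity $\delta^{-1}\iota(a_\rho)\delta \cdot (1 \otimes 1) = 1 \otimes a$ in $(\Qreg^!)^W$. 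Once those ingredients are in hand, the present theorem is essentially a formal assembly.
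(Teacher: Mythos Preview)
Your proposal is correct and follows the paper's approach exactly: assemble Theorem~\ref{ComputationMreg}, Proposition~\ref{co:CentreAction}, and Theorem~\ref{ComputationQreg} to obtain the $W$-invariant isomorphism, then invoke the base-change formulas \eqref{eq:MfromMreg} and \eqref{eq:QfromQreg} from Section~\ref{sec:weyl} to deduce Conjecture~\ref{pi}. Your write-up is a faithful elaboration of what the paper states in two sentences.
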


\begin{proof}
The first statement follows from Theorem \ref{ComputationMreg}, Proposition \ref{co:CentreAction} and Theorem \ref{ComputationQreg}.
This establishes that Conjecture \ref{piW} holds for the Springer resolution, and Conjecture \ref{pi} follows from the discussion
in Section \ref{sec:weyl}.
\end{proof}

\appendix 
\section{A geometric description of highest weights} \label{appendixWebster}
\mbox{}\hspace{2cm} {\bf \large an appendix by
  Ben Webster}\\\\
  \newcommand{\mt}{\mathfrak{t}}
  \renewcommand{\cD}[1]{\mathscr{D}^{#1}}
\newcommand{\pa}{c}
\newcommand{\fix}{x}
\newcommand{\fixb}{y}
\newcommand{\comm}{\mathsf{m}}
\newcommand{\anti}{\mathsf{a}}
As in the main text, let $\tilde{X}\to X$ be a conical symplectic
resolution, and  $T$ an algebraic torus acting Hamiltonianly on
$\tilde{X}$.  As before, the algebra $\mathscr{A}$ is the universal deformation
quantization of $X$; it will be more convenient for us to instead work with a specialization of this, though. Based on the work
of Bezrukavnikov and Kaledin, we have a sheaf $\cD{\pa}$ on $\tilde{X}$ quantizing the structure sheaf for each choice of $\pa\in H^2(\tilde{X};\C)$. The global sections of this sheaf 
are $\mathscr{A}^\pa $, the specialization of $ \scrA $ at $ \pa$.

Recall the quantization exact sequence (\ref{eq:quant-exact}) \begin{equation*}
0\to H_2(\tX; \C)\oplus\C\hbar \to \Azo \to \ft \to 0.
\end{equation*}
As discussed in Remark \ref{rem:qex}, a splitting  of this exact sequence is a quantized comoment map $ \ft \rightarrow \Azo $.  Any two such maps differ by an element of $\Hom(\mt, H_2(\tilde{X})\oplus \C\hbar )$.

By universality, we can choose a $T$-equivariant isomorphism $\anti\colon \mathscr{A}
\cong \mathscr{A}^{op}$ sending $\hbar\mapsto -\hbar$.  For any
quantized comoment map $\comm$, we have that $\anti(\comm)$ is again a
quantum comoment map, and this is an anti-automorphism of affine
spaces.  The fixed points form a torsor over $\Hom(\mt,
H_2(\tilde{X}))$;  all the corresponding comoment maps agree after
taking the quotient at $\pa=0$.  

We will be interested in how this comoment map interacts with the
B-algebra mentioned earlier.  This algebra depends on $\xi \in \mt$
and is given by
 $$B(\scrA) := \scrA_0\Big{/} \sum_{\langle\mu,\xi\rangle > 0}
 \left\{a b \mid a \in \scrA_\mu, b \in \scrA_{-\mu}\right\}.$$

 This algebra naturally acts on the weight spaces of any
 $\scrA$-module which minimize $\langle\mu,\xi\rangle$.  Thus, the induced map $\mt \to
 B(\scrA)$ also controls how $\mt$ acts on the lowest weight spaces of
 the different simple modules in category $\cO$.

We use the construction of standard modules in category $\cO$ given in
\cite[\S 5.1]{BLPWgco}.  This is done by analyzing the structure of
$\cD{\pa}$ near any fixed point $\fix$.  By the Darboux theorem, a
formal neighborhood of $\fix$ in $\tilde{X}$ is $T$-equivariantly
symplectomorphic to the tangent space, and thus the completion of
$\cD{\pa}$ here to a completed Weyl algebra (with parameter $\hbar$).
Passing to $T$- and $\mathbb{G}_m$-locally finite vectors gives a copy of the
Weyl algebra $A^{\pa}_{\fix}$, as shown in \cite[Lem. 5.2]{BLPWgco}. Since
we only pass to $\mathbb{G}_m$-locally finite vectors rather than invariants as
in \cite[Lem. 5.2]{BLPWgco}, we obtain a family over $\C[\hbar]$,
which we identify with polynomials on $T_{\fix}\tilde{X}$ with
the Moyal star product.   
The algebra  $A^{\pa}_{\fix}$ has its own $B$-algebra $B^{\pa}_{\fix}\cong
\C[\hbar]$.  Restriction induces a map $\scrA\to A^{\pa}_{\fix}$ which
in turn induces a map $B(\scrA) \to B^{\pa}_{\fix}$.  The induced map
$\Azo\to \C\hbar \subset B^{\pa}_{\fix}$ is exactly the map
$w^{\pa}_{\fix}$ defined in Section \ref{sec:rank}.  

\begin{definition}
For each fixed point $x$,  let $\comm_{\fix}\colon \mt\to \mathscr{A}^2_0$ be the unique
  comoment map whose image in $B^{\pa}_{\fix}$ is 0 for all $\pa$.  
\end{definition}
This comoment map is easily constructed by considering an arbitrary
moment map, and noting that this induces a linear map $\mt \to
H_2(\tilde{X})\oplus \C\hbar$ by considering the values in $B^{\pa}_{\fix}$
for different $\pa$; thus subtracting this, we arrive at
$\comm_{\fix}$.

Of course, this splitting has a dual splitting $\mathscr{A}^2_0\to
H_2(\tX) \oplus \C \hbar$, which is exactly the map $w_{\fix}^{\pa}$,
thinking of $\pa$ as an element of the dual space of $H_2(\tX) \oplus
\C \hbar$.  The differences $\comm_{\fix}-\comm_{\fixb}$ and
$w_{\fix}^{\pa}-w_{\fixb}^{\pa}$ both define maps $\ft\to H_2(\tX) \oplus
\C \hbar$, so we can compare them.
By the usual properties of dual splittings, we have that:
\begin{lemma}\label{lem:comm-w}
  We have an equality
  $\comm_{\fix}-\comm_{\fixb}=w_{\fixb}^{\pa}-w_{\fix}^{\pa}$.  
\end{lemma}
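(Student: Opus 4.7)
The plan is to recast both sides of the asserted identity in terms of the dual splittings of the quantization exact sequence \eqref{eq:quant-exact}, whereupon the statement reduces to a routine linear-algebraic identity comparing two splittings of a short exact sequence.

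The first step is to identify the family of maps $\{w_{\fix}^{\pa}\}_{\pa}$ with the dual splitting of $\comm_{\fix}$. Recall that a comoment map $\comm_{\fix}:\mt\to\Azo$ is the same data as its dual splitting $p_{\fix}:\Azo\to H_2(\tilde X;\C)\oplus\C\hbar$, namely the unique linear retraction with kernel $\comm_{\fix}(\mt)$. Now $w_{\fix}^{\pa}(\theta)=\hbar\langle\theta,\pa\rangle$ for $\theta\in H_2(\tilde X;\C)$ and $w_{\fix}^{\pa}(\hbar)=\hbar$ directly from the construction of $B^{\pa}_{\fix}$, while the defining property of $\comm_{\fix}$ (its image in each $B^{\pa}_{\fix}$ vanishes) says exactly that $w_{\fix}^{\pa}\circ\comm_{\fix}=0$ for every $\pa$. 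Writing $p_{\fix}(a)=(\theta_a,\lambda_a\hbar)$, we therefore obtain
\[ w_{\fix}^{\pa}(a) \;=\; \hbar\langle\theta_a,\pa\rangle + \lambda_a\hbar, \]
so the family $\{w_{\fix}^{\pa}\}_{\pa}$ completely recovers $p_{\fix}$: the $\pa$-linear part yields $\theta_a$ and the $\pa$-independent part yields $\lambda_a$.

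The second step is the elementary fact that for two splittings $s,s':C\to B$ of a short exact sequence $0\to A\to B\to C\to 0$ of vector spaces, the dual splittings $p,p':B\to A$ are related by $p'-p=-(s'-s)\circ\pi$, where $\pi:B\to C$ is the quotient; equivalently, once descended to a map $C\to A$, one has $p'-p=s-s'$. Applied to $s=\comm_{\fix}$ and $s'=\comm_{\fixb}$ this gives
\[ p_{\fixb}-p_{\fix} \;=\; \comm_{\fix}-\comm_{\fixb} \]
as maps $\mt\to H_2(\tilde X;\C)\oplus\C\hbar$. Finally, the difference $w_{\fixb}^{\pa}-w_{\fix}^{\pa}$ vanishes on the subspace $H_2(\tilde X;\C)\oplus\C\hbar$ (both $w_{\fix}^{\pa}$ and $w_{\fixb}^{\pa}$ agree there by construction) and hence descends to a $\pa$-dependent map $\mt\to\C\hbar$ which, by the dictionary of Step 1, encodes precisely $p_{\fixb}-p_{\fix}$. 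This yields the claimed equality.

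The only potentially delicate point is the bookkeeping in the first step: correctly matching the affine-in-$\pa$ dependence of $w_{\fix}^{\pa}$ with the two components of $p_{\fix}$, keeping track of the $\hbar$ prefactor coming from the grading, and getting the sign right in the comparison of splittings. Once those identifications are in place, the lemma is a tautology about splittings of short exact sequences.
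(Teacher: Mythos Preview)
Your proposal is correct and follows exactly the approach the paper takes: the paper's entire proof is the single phrase ``By the usual properties of dual splittings,'' and you have simply unpacked what those properties are. In particular, your Step~1 is precisely the paper's identification of the family $\{w_{\fix}^{\pa}\}_{\pa}$ with the dual splitting of $\comm_{\fix}$ (stated just before the lemma), and your Step~2 is the elementary identity $p'-p=s-s'$ for dual splittings that the paper leaves implicit.
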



For each fixed point $\fix$, there is a natural decomposition of
$T_{\fix}\tilde{X}=T^+_{\fix}\tilde{X}\oplus T^-_{\fix}\tilde{X}$ according to whether the weight of
$\xi$ is positive or negative.  Let $\{\chi_i\}$ denote the
$T$-weights of $T^+_{\fix}\tilde{X}$ (with multiplicity); of course, since $\xi$ is symplectic, the
weights on $T^-_{\fix}\tilde{X}$ are just $\{-\chi_i\}$.
We define an element $\chi_{\fix}\in \mt^*$ by
$\chi_{\fix}=\nicefrac{1}{2}\sum\chi_i$.  More canonically, we can think
of this as the weight of $T$ on $\det(T^+_{\fix}\tilde{X})^{\nicefrac 12}$.

We have an exact sequence
$$
0 \rightarrow H_2(\tX) \rightarrow H_2^T(\tX) \rightarrow \mt \rightarrow 0 .
$$
This is the dual to the cohomology exact sequence (\ref{eq:coh-exact}), except that we have removed the $ \cs $-equivariance.

For each $\fix$, we have a
splitting $\iota_{\fix}:\mt \to H_2^T(\tilde{X})$ given by push-forward from the
fixed point.  We define
$\rho_{\fix,\fixb}=\iota_{\fix}-\iota_{\fixb}:\mt\to H_2(\tilde{X})$; as usual, the
difference of two splittings is a map to the kernel of the map which
is split. This map also has a dual $\rho^*_{\fix,\fixb}\colon
H^2(\tilde{X})\to \mt^*$.  

\begin{theorem} \label{thm:differenceofhighestweights}
For all fixed points $\fix,\fixb$, we have 
\begin{equation}\label{difference-comoment-maps}
\comm_{\fix}-\comm_{\fixb} =
\rho_{\fix,\fixb} - (\chi_{\fix}-\chi_{\fixb})\hbar \in \Hom(\mt, H_2(\tX) \oplus \C \hbar). 
\end{equation}
That is, for any noncommutative comoment map $\comm$, and any $ u \in \mt$,  the difference
between the action of $\comm(u)$ on $B^{\pa}_{\fixb}$ and on $B^{\pa}_{\fix}$ is
\begin{equation}
(w^{\pa}_{\fixb}-w^{\pa}_{\fix})(u)=\langle u,\rho^*_{\fix,\fixb}(\pa) -(\chi_{\fix}-\chi_{\fixb})\rangle\hbar
.\label{eq:w-diff}
\end{equation}
\end{theorem}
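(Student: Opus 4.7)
The plan is to prove (\ref{difference-comoment-maps}), from which (\ref{eq:w-diff}) is immediate via Lemma \ref{lem:comm-w}. I will split the difference $\comm_x - \comm_y$ into an equivariant-cohomological piece producing $\rho_{x,y}$ and a purely quantum piece producing $-(\chi_x - \chi_y)\hbar$, by inserting an intermediate ``symmetric'' comoment map at each fixed point.

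The first step is a local computation. Using the Darboux-type statement already recalled in the appendix, $A^c_x$ is isomorphic to a Rees Weyl algebra on $T_x \tX = T^+_x \oplus T^-_x$; pick a $T$-weight basis $z_1, \ldots, z_n, w_1, \ldots, w_n$ of weights $\chi_1, \ldots, \chi_n, -\chi_1, \ldots, -\chi_n$ satisfying $[w_i, z_j] = \delta_{ij}\hbar$. The symmetrized expression $\mu^{\mathrm{sym}}_x(u) := \sum_i \langle \chi_i, u\rangle \cdot \tfrac{1}{2}(z_i w_i + w_i z_i)$ is a comoment map on $A^c_x$ fixed by the local anti-involution (which sends $z_i \mapsto z_i$, $w_i \mapsto w_i$, $\hbar \mapsto -\hbar$). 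Its image in $B^c_x \cong \C[\hbar]$, obtained by killing the left ideal generated by the $w_i$'s, equals $\sum_i \langle \chi_i, u\rangle \tfrac{\hbar}{2} = \langle \chi_x, u\rangle \hbar$. Since $\comm_x$ is characterized as the unique comoment map with vanishing image in $B^c_x$, this forces $\comm_x|_{A^c_x}(u) = \mu^{\mathrm{sym}}_x(u) - \langle \chi_x, u\rangle \hbar$.

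The second step is to globalize. I would construct a distinguished $\anti$-fixed comoment map $\tilde\mu_x \colon \ft \to \scrA^2_0$ whose restriction to $A^c_x$ equals $\mu^{\mathrm{sym}}_x$. As noted in the appendix, the set of $\anti$-fixed global comoment maps is a torsor over $\Hom(\ft, H_2(\tX))$, so the remaining freedom in specifying $\tilde\mu_x$ is purely classical and should be pinned down by requiring its classical ($\hbar \to 0$) shadow to agree with the fixed-point pushforward splitting $\iota_x \colon \ft \to H^T_2(\tX)$. This identification invokes the universal-deformation and canonical-quantization package of Bezrukavnikov--Kaledin and Losev in its $T$-equivariant incarnation: an equivariant lift $c' \in H^2_T(\tX)$ of $c \in H^2(\tX)$ produces a quantum comoment map whose reduction in $B^c_x$ is, up to the universal Weyl-symmetrization correction $\chi_x \hbar$, the evaluation $c'|_x \in \ft^*$. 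Granted this correspondence, the difference $\tilde\mu_x - \tilde\mu_y$ lies in $\Hom(\ft, H_2(\tX))$ and equals $\iota_x - \iota_y = \rho_{x,y}$.

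Combining, $\comm_x - \comm_y = (\tilde\mu_x - \tilde\mu_y) - (\chi_x - \chi_y)\hbar = \rho_{x,y} - (\chi_x - \chi_y)\hbar$, which is precisely (\ref{difference-comoment-maps}). The main obstacle is the second step: the identification of the canonically chosen $\anti$-fixed comoment map $\tilde\mu_x$ with the geometric splitting $\iota_x$. Concretely, one must verify that the quantum comoment map attached to an equivariant cohomology lift $c'$ restricts, at each fixed point $x$, to $c'|_x$ plus the universal $\chi_x \hbar$ shift coming from Weyl-symmetrization --- a compatibility statement between the equivariant universal quantization of Bezrukavnikov--Kaledin/Losev and restriction to fixed points.
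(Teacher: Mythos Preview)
Your local computation in step~1 is correct and matches the paper: writing $\comm_x$ locally as $\sum_i \langle \chi_i,u\rangle\, z_i w_i$ (the unique comoment map vanishing in $B^c_x$) and comparing with the symmetrized $\mu^{\mathrm{sym}}_x$ fixed by the local anti-involution gives exactly the $\hbar$-component $-\chi_x\hbar$. This is how the paper identifies $k_{x,y}=\chi_x-\chi_y$ as well.

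The gap is in step~2. You want to identify the torsor of $\anti$-fixed global comoment maps with the torsor of splittings $\ft\to H_2^T(\tX)$, and then assert that under this identification $\comm_x+\chi_x\hbar$ goes to $\iota_x$. But you do not prove this identification; you invoke an ``equivariant incarnation'' of the Bezrukavnikov--Kaledin/Losev package and a compatibility with restriction to fixed points, and you flag this yourself as the main obstacle. That compatibility statement is precisely the content of the theorem, so as written the argument is circular: you are assuming the $H_2(\tX)$-component of the answer in order to deduce it.

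The paper avoids this by not trying to build a global $\anti$-fixed map $\tilde\mu_x$ at all. Instead it computes the $H_2(\tX)$-component $\rho'_{x,y}$ directly via quantized line bundles: for $c-c'\in H^2(\tX;\Z)$ one has a $\cD{c}$--$\cD{c'}$ bimodule quantizing the line bundle of that Chern class, and the action $t\cdot s=\comm_x(t)s-s\comm_x(t)$ endows it with the $T$-equivariant structure trivial at the fiber over $x$. Its weight at the fiber over $y$ is then $\rho^*_{y,x}(c-c')$ by definition of $\rho$, which forces $\rho'_{x,y}=\rho_{x,y}$. This is a short, concrete argument that replaces the unproven torsor identification in your step~2; if you want to complete your approach, you would need to supply essentially this same bimodule calculation to justify the compatibility you are assuming.
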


\begin{corollary}
  The composition $U(\mt)\to B\to \oplus B^{\pa}_{\fix}$ is surjective if and only if $\rho^*_{\fix,\fixb}(\pa)-(\chi_{\fix}-\chi_{\fixb})$ is not zero for all $\fix,\fixb$.   In particular, if $\rho_{\fix,\fixb}\neq 0$ for all $\fix,\fixb$, the map $B\to \oplus_{\fix} B^{\pa}_{\fix}$ is surjective for $\pa$ away from finitely many proper subspaces.
\end{corollary}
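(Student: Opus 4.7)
The plan is to deduce the corollary directly from Theorem \ref{thm:differenceofhighestweights}. First I would unpack the composition $U(\mt) \to B \to \bigoplus_x B^c_x$: each $B^c_x$ is identified with $\C[\hbar]$, and for any fixed quantum comoment map $\comm$, the composite sends $u \in \mt$ to $w^c_x(\comm(u)) = \ell^c_x(u)\,\hbar$ for some linear form $\ell^c_x \in \mt^*$ depending on $\comm$ only up to an additive shift common to all $x$. The difference $\ell^c_x - \ell^c_y$ is therefore independent of $\comm$ and, by Equation (\ref{eq:w-diff}), equals $\rho^*_{x,y}(c) - (\chi_x - \chi_y)$ in $\mt^*$.

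Interpreting $U(\mt)$ as the $\C[\hbar]$-algebra $\Sym(\mt) \otimes \C[\hbar]$ (with $\hbar$ central, consistent with the graded conventions on $\scrA$ and $B$), each individual projection onto $B^c_x \cong \C[\hbar]$ is surjective, with kernel $I_x$ the ideal generated by $\{u - \ell^c_x(u)\hbar : u \in \mt\}$. For $x \neq y$, the sum $I_x + I_y$ contains $(\ell^c_x - \ell^c_y)(u)\,\hbar$ for every $u \in \mt$; if the form $\ell^c_x - \ell^c_y$ is nonzero, choosing $u$ with $(\ell^c_x - \ell^c_y)(u) \neq 0$ places $\hbar$ in $I_x + I_y$, whence also every $u \in \mt$, so $I_x + I_y$ is the unit ideal. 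Conversely, if $\ell^c_x = \ell^c_y$ the ideals coincide and the image of $U(\mt)$ in $B^c_x \oplus B^c_y$ lies on the diagonal. The Chinese Remainder Theorem then yields that the composite is surjective if and only if the forms $\ell^c_x$ are pairwise distinct, which by the identity above is equivalent to $\rho^*_{x,y}(c) - (\chi_x - \chi_y) \neq 0$ for all $x \neq y$.

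For the ``in particular'' clause, when $\rho_{x,y} : \mt \to H_2(\tX)$ is nonzero, its transpose $\rho^*_{x,y} : H^2(\tX) \to \mt^*$ is also nonzero, so the locus $\{c \in H^2(\tX) : \rho^*_{x,y}(c) = \chi_x - \chi_y\}$ is a proper affine subspace. The finite union of these over pairs is the set of $c$ to be avoided; outside it, the composite $U(\mt) \to B \to \bigoplus_x B^c_x$ is surjective, and since this factors through $B$ the projection $B \to \bigoplus_x B^c_x$ is also surjective.

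The main subtlety is interpretive rather than computational: one must regard the map as a $\C[\hbar]$-algebra map so that $\hbar$ is available in the image of each factor. Without this, $\hbar$ would fail to be in the image of $U(\mt)$ in any factor on which $\ell^c_x$ vanishes, and surjectivity could fail for reasons orthogonal to the stated hypothesis. With the Rees-graded convention fixed, everything reduces to the Chinese Remainder argument above, the only nontrivial input being the formula (\ref{eq:w-diff}) computing the ``points'' $\ell^c_x$ in terms of the geometric data $\rho_{x,y}$ and $\chi_{x}-\chi_{y}$.
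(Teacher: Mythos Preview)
Your argument is correct. In the paper, the \texttt{proof} block immediately following the Corollary is in fact the proof of the preceding Theorem~\ref{thm:differenceofhighestweights} (it computes $\comm_{\fix}-\comm_{\fixb}$ and establishes~\eqref{difference-comoment-maps}); the Corollary itself is left as an unproved consequence of~\eqref{eq:w-diff}. Your Chinese Remainder argument is precisely the natural way to fill in that gap, and your care in interpreting $U(\mt)$ as the $\C[\hbar]$-algebra $\Sym(\mt)\otimes\C[\hbar]$ is exactly the right observation to make the surjectivity onto each factor honest. One harmless sign slip: by~\eqref{eq:w-diff} it is $w^{\pa}_{\fixb}-w^{\pa}_{\fix}$, not $w^{\pa}_{\fix}-w^{\pa}_{\fixb}$, that equals $\langle -,\rho^*_{\fix,\fixb}(\pa)-(\chi_{\fix}-\chi_{\fixb})\rangle\hbar$, so your $\ell^{\pa}_{\fix}-\ell^{\pa}_{\fixb}$ is the negative of what you wrote; this does not affect the ``nonzero'' criterion.
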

\begin{proof}
We must have  $\comm_{\fix}-\comm_{\fixb} =
\rho_{\fix,\fixb}' - k_{\fix,\fixb}\hbar $ for some $\rho_{\fix,\fixb}'
\in \Hom(\mt, H^2(\tX) )$ and $k_{\fix,\fixb}\in \mt^*$.

First, we note that $k_{\fix,\fixb}=k_{\fix}-k_{\fixb}$ where
$k_{\fix}=\frac{1}{2} (\anti(\comm_{\fix})-\comm_{\fix}) $.  
 As in the
  proof of \cite[Lem. 5.2]{BLPWgco}, we choose Weyl generators
  $\{x_i,y_i\}$ such that all $x_i$ have positive weight for $\xi$ and
  weight $\chi_i$ for $T$;
  the $y_i$ thus have weight $-\chi_i$.  In these coordinates, we have
  $\comm_{\fix}(t)=\sum \langle t,\chi_i\rangle x_iy_i$.  We have a
  weight preserving
  anti-automorphism on $A^{0}_{fix}$ given by $x_i\mapsto x_i, y_i\mapsto
  y_i,\hbar \mapsto -\hbar$, and the image of $\comm_{\fix}(t)$ under
  this map is\[\sum
  \langle t,\chi_i\rangle y_ix_i=\sum \langle t,\chi_i\rangle
  x_iy_i-\hbar \langle t,\chi_i\rangle.\]  Thus, we have that 
  $k_{\fix}= \nicefrac{\hbar }{2}\sum\chi_i=\chi_{\fix}$, and the second component
  of the 
  formula (\ref{difference-comoment-maps}) is
  correct.
  
We can calculate $\rho_{\fix,\fixb}' $ by considering the effect of tensor product with quantized line bundles.  Whenever we have two classes $\pa,\pa'$ with $\pa-\pa'\in H^2(\tilde{X},\Z)$, we can quantize the unique line bundle with this Chern class to a $\cD{\pa}\operatorname{-}\cD{c'}$-bimodule.  The action
 \[ t\cdot s=\comm_{\fix}(t)s-s \comm_{\fix}(t)\] induces an action of $\ft$ on the sections of this quantization and thus a $T$-equivariant structure on the corresponding line bundle.  This is uniquely characterized by the fact that this action is trivial on the fiber over $\fix$.  Thus, its
  weight on the fiber over $\fixb$ is exactly $\rho_{\fixb,\fix}^*(\pa-\pa')$, and  we can conclude that
  \[\langle \rho_{\fix,\fixb}' (t), \pa-\pa'\rangle =\langle t,
\rho_{\fix,\fixb}^*(\pa-\pa')\rangle .\]
 Since this equation holds for all $\pa - \pa' \in H^2(\tilde{X},\Z)$, 
 we must have that $\rho_{\fix,\fixb}'= \rho_{\fix,\fixb}$ and \eqref{difference-comoment-maps} holds.  
\end{proof}

\begin{example}
  If $\tilde{X}=T^*G/B$, then let $\mathfrak{n}_-\subset \mathfrak{g}$ be the
  nilpotent subalgebra given by the negative $\xi$-weight spaces of
  $\mathfrak{g}$ and $\mt$ be the Cartan subalgebra given by its invariants.   At
  each $T$-fixed points, we have that $T^*\tilde{X}\cong \mathfrak{g}/\mt$.  Thus at each fixed
  point, we have that $T^-_{\fix}\tilde{X}\cong \mathfrak{n}_-$
  as a $T$-representation, and $\chi_{\fix}=-\rho$ for all $\fix$.

  Recall that after identifying $H_2(G/B;\C)\cong \mt$, the maps $\rho_{\fix,\fixb}:\mt\to \mt$ are simply
  $w_{\fix}-w_{\fixb}$ for the corresponding elements of the Weyl group.  In
  particular, this shows that the $B$-algebra will surject to the sum
  $B^{\pa}_{\fix}$ (and thus we will have the full expected number of Verma modules) if and only if $w\pa-w'\pa\neq 0$ for all $w,w'\in W$. That is, if $\pa$ is regular.

  Readers familiar with the theory of category $\cO$ might be confused
  by the absence of ``$\rho$-shifts'' but we have already dealt with
  these in choosing our conventions for $\cD{\pa}$ (these coincide with
  the conventions of \cite{BB}); for example, the usual microlocal differential operators
  are $\cD{\rho}$, so this is consistent with the fact that principal
  block is regular.
\end{example}


\bibliography{./symplectic}
\bibliographystyle{amsalpha}

\end{document}